\DeclareFontFamily{OT1}{pzc}{}
\DeclareFontShape{OT1}{pzc}{m}{it}{<-> s * [1.10] pzcmi7t}{}
\DeclareMathAlphabet{\mathpzc}{OT1}{pzc}{m}{it}
\definecolor{Red}{cmyk}{0,1,1,0.2}
\newcommand{\R}{\mathbb R}
\def\R{\mathbb R}
\def\a{\alpha}
\def\e{\varepsilon}
\newcommand{\be}{\begin{equation}}
\newcommand{\ee}{\end{equation}}
\def\1{{\bf 1}}
\newtheorem{Theorem}{Theorem}[section]
\newtheorem{defi}[Theorem]{Definition}
\newtheorem{theo}[Theorem]{Theorem}
\newtheorem{Proposition}[Theorem]{Proposition}
\newtheorem{Lemma}[Theorem]{Lemma}
\newtheorem{Corollary}[Theorem]{Corollary}
\newtheorem{Remark}[Theorem]{Remark}
\begin{document}

\title{\bf Beyond uniqueness:\\
Relaxation calculus of junction conditions \\for coercive Hamilton-Jacobi equations}
\author{\renewcommand{\thefootnote}{\arabic{footnote}}
  N. Forcadel\footnotemark[1], R. Monneau\footnotemark[2] ~\footnotemark[3]}
\footnotetext[1]{INSA Rouen Normandie, Normandie Univ, LMI UR 3226, F-76000 Rouen, France.}
\footnotetext[2]{CEREMADE, UMR CNRS 7534, Universit\'e Paris Dauphine-PSL,
Place de Lattre de Tassigny, 75775 Paris Cedex 16, France. }
\footnotetext[3]{CERMICS, Universit\'e Paris-Est, Ecole des Ponts ParisTech, 6-8 avenue Blaise Pascal, 77455 Marne-la-Vall\'ee Cedex 2, France.}

\maketitle

\begin{abstract}
A junction is a particular network given by the collection of $N\ge 1$ half lines $[0,+\infty)$ glued together at the origin. On such a junction, we consider evolutive Hamilton-Jacobi equations with $N$ coercive Hamiltonians. Furthermore,
we consider a general desired junction condition at the origin, given by some monotone function $F_0:\R^N\to \R$.
There is existence and uniqueness of solutions which only satisfy weakly the junction condition (at the origin, they satisfy either the desired junction condition or the PDE).

We show that those solutions satisfy strongly a relaxed junction condition $\frak R F_0$ (that we can recognize as an effective junction condition). It is remarkable that this relaxed condition can be computed in three different but equivalent  ways: 1) using viscosity inequalities, 2) using Godunov fluxes, 3) using Riemann problems.
Our result goes beyond uniqueness theory, in the following sense:  solutions to two different desired junction conditions $F_0$ and $F_1$
do coincide if $\frak R F_0=\frak R F_1$.
\end{abstract}

\paragraph{AMS Classification:}  35B51, 35F21, 35F31
\paragraph{Keywords}: junctions, networks, Hamilton-Jacobi equations, boundary conditions, effective boundary condition, relaxation.
\tableofcontents

\section{Introduction} 
In this paper, we consider Hamilton-Jacobi equations of evolution type posed on junctions. The first results concerning these equations have been obtained in the convex case in  \cite{ACCT, IMZ} and are closely related to optimal control. In \cite{IM1}, the authors obtained a strong comparison principle using a PDE approach while the non-convex case were studied in \cite{LS1, LS2} (see also \cite{FIM2, FIM3} for a new approach for proving comparison principle). Many contributions followed these  articles and the reader is referred to the book by G. Barles and E. Chasseigne \cite{BC-book} for an up-to-date state of the art.

 It is now well-known that if an equation is posed on a domain, the boundary condition can be in conflict with the equation and the same phenomena naturally appears when considering equations on networks. A classical way to handle this difficulty for Hamilton-Jacobi equations is to impose either the boundary condition (or the junction condition) or the equation at the boundary (or at the junction point), both in the viscosity sense. Such solutions are called weak (viscosity) solutions.

Concerning Hamilton-Jacobi on junctions and for convex and coercive Hamiltonians, C. Imbert and the second author \cite{IM1} show the existence of weak solutions and proved that these weak solutions satisfy other junction condition in a strong way. These conditions are called relaxed junction conditions and are parametrized by a single real parameter, called the flux limiter (only for convex Hamiltonians).

When the Hamiltonians is still coercive but not necessarily convex, the situation is much more complicated. The first result in that direction was obtained by J. Guerand \cite{guerand} in the one-dimensional case (a very simple junction with only one branch). She proved that, in this situation, it is still possible to relax the boundary conditions to obtain strong solutions. She showed in particular that the family of relaxed boundary conditions is much more rich and is characterized by a family of limiter points. In \cite{FIM1}, with C. Imbert, we revisit this characterization and propose a new formula which can be easily derived from the definition of weak viscosity solutions. We also exhibit a strong connection between this relaxed boundary condition and  Godunov's fluxes for conservation laws. 

The goal of this paper is to extend these results to the case of general junctions with several branches. In particular we show how to define properly relaxed junction conditions using the definition of weak viscosity solutions. As for the case of one branch, we show that the relaxation can also be defined in terms of Godunov's fluxes (see also \cite{M} for related results). Finally, we also give a third formulation of the relaxation based on Riemann problems.

Let us emphasise the fact that characterization of relaxed junction conditions is important, because several junction conditions can lead to the same relaxed junction condition.
Hence, even if at first glance, problems may seem different, they can be exactly the same (see Subsection \ref{s5.5}).
From this point of view, our analysis goes beyond the classical question of uniqueness of solutions.

\section{Main results}

\subsection{The junction problem}

We begin to  describe what is a (one-dimensional) {\bf junction}.
We consider $N\ge 1$ copies of the interval $[0,+\infty)$ that we call $J_\alpha$ for $\alpha=1,\dots,N$.
We glue all the branches $J_\alpha$ together at the origin such that
$$J_\alpha\cap J_\beta = \left\{0\right\}\quad \mbox{for all}\quad \alpha\not= \beta$$
We call the junction
$$J=\bigcup_{\alpha=1,\dots,N} J_\alpha.$$
where $x=0$ is now the {\bf junction point}.

For a function $u: [0,+\infty)\times J \to \R$ whose values are  $u(t,x)$, we denote by $u_t$ the time derivative of $u$, and
we define the gradient of $u$ as
$$u_x(t,x)=\left\{\begin{array}{ll}
\partial_\alpha u(t,x) & \quad \mbox{if}\quad x\in J_\alpha^*:=J_\alpha\backslash \left\{0\right\},\\
(\partial_1u(t,0^+), \dots, \partial_Nu(t,0^+)) & \quad \mbox{if}\quad x=0.\
\end{array}\right.$$
Throughout the paper, we will consider solutions of the following problem
\begin{equation}\label{eq::m10}
\left\{\begin{array}{ll}
{u}_t + {H}^\alpha({u}_x)=0 &\quad \mbox{for all} \quad t\in (0,+\infty) \quad \mbox{and}\quad x\in J_\alpha^*,\quad \mbox{for}\quad \alpha=1,\dots,N\\
{u}_t + {F_0}(u_x)=0 &\quad \mbox{for all} \quad t\in (0,+\infty) \quad \mbox{and}\quad x=0.\\
\end{array}\right.
\end{equation}
The second line of  \eqref{eq::m10} is the junction condition (JC) and by abuse of terminology, we will also say that the junction condition
(or junction function) is $F_0$.
In the particular case of  \eqref{eq::m10}, we call the second line, a {\bf desired junction condition}
(because as we will see, this condition can not always be truly satisfied).
Here the Hamiltonians ${H}^\alpha$ for $\alpha=1,\dots,N$ are assumed to satisfy
\begin{equation}\label{eq::p00}
\left\{\begin{array}{ll}
\mbox{\bf (Continuity)} & \quad {H^\alpha}\in C(\R^N)\\
\\
\mbox{\bf (Coercivity)} & \quad {H^\alpha}(p^\alpha)\to +\infty \quad \mbox{as}\quad \left|p^\alpha\right|\to +\infty.
\end{array}\right.
\end{equation}
When this is the case, also simply say  that 
$$H=(H^1,\dots,H^N)$$
satisfies \eqref{eq::p00}.
We also assume that the function $F_0$ satisfies
\begin{equation}\label{eq::m11}
\left\{\begin{array}{ll}
\mbox{\bf (Continuity)} & \quad {F_0}\in C(\R^N)\\
\\
\mbox{\bf (Monotonicity)} & \quad {F_0}\textrm{  is nonincreasing in each of its arguments.}
\end{array}\right.
\end{equation}
For existence results, we will also need an initial condition
\begin{equation}\label{eq::p21}
u(0,x)=u_0(x) \quad \mbox{for all}\quad x\in J
\end{equation}
that is assumed to be chosen such that
\begin{equation}\label{eq::p22}
\mbox{$u_0$ is uniformly continuous on $J$.}
\end{equation}

\subsection{The relaxation formula}
Inspired by \cite{FIM1}, in order to define the relaxation operator, we define  the sub-relaxation operator with respect to $H$ by
$$\underline{R} F_0(p) =\sup_{q\ge p} \min\left\{F_0(q),H_{min}(q)\right\}$$
and the super-relaxation operator by
$$\overline{R} F_0(p) =\inf_{q\le p} \max\left\{F_0(q),H_{max}(q)\right\}$$
where $q\le p$ means $q^\alpha\le p^\alpha$ for each $\alpha=1,\dots,N$ and 
$$H_{max}(p)=\max_{\alpha=1,\dots,N}H^\alpha(p^\alpha)\quad\quad  \mbox{and}\quad \quad
H_{min}(p)=\min_{\alpha=1,\dots,N}H^\alpha(p^\alpha).$$

We then have the following theorem which enables to define the relaxation operator $\frak R$.

\begin{Theorem}[The relaxation operator $\frak R$]\label{th:relax}
Let $H$ satisfy \eqref{eq::p00} and  $F_0$  satisfy \eqref{eq::m11}. Then
$$\underline R \overline R F_0=\overline R \underline R F_0\quad (=:\frak RF_0).$$
Moreover
$$\frak R F_0=\frak R \max(F_0,H_-)\ge H_-,$$
where
\begin{equation}\label{eq::rep1}
H_-(p)=\max_{\alpha=1,\dots,N}H^\alpha_-(p^\alpha) \quad \mbox{with}\quad H^\alpha_-(p^\alpha)=\inf_{q^\alpha\le p^\alpha} H^\alpha(q^\alpha)
\end{equation}
i.e. $H^\alpha_-$ is the  lower nonincreasing hull of $H^\alpha$.
\end{Theorem}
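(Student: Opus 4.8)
The plan is to reduce the whole statement to a single fixed-point identity and then read off everything else. Throughout I would work in the class of real-valued functions on $\R^N$ that are nonincreasing in each argument, and use freely that $F_0$ belongs to it.

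\emph{Step 1: elementary properties of $\underline R$ and $\overline R$.} I would first record, directly from the definitions and from \eqref{eq::p00}–\eqref{eq::m11}, that both $\underline R$ and $\overline R$ map nonincreasing functions to nonincreasing functions, are order–preserving, are idempotent (the nontrivial inclusions follow from $\underline R F_0(q)\ge \min\{F_0(q),H_{min}(q)\}$ and dually $\overline R F_0(q)\le \max\{F_0(q),H_{max}(q)\}$), and satisfy the sandwich $\underline R F_0\le F_0\le \overline R F_0$, whose two outer inequalities use precisely the monotonicity of $F_0$. Coercivity enters to ensure finiteness: $\overline R G(p)=\inf_{q\le p}\max\{G(q),H_{max}(q)\}\ge \inf_{q\le p}H_{max}(q)=H_-(p)>-\infty$, the middle equality being obtained by minimising each coordinate separately. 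This already isolates the two facts that drive the end of the proof: (i) $\overline R G\ge H_-$ for \emph{every} $G$; and (ii) $\overline R\max(F_0,H_-)=\overline R F_0$, because $H_-\le H_{max}$ pointwise forces $\max\{H_-(q),H_{max}(q)\}=H_{max}(q)$.

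\emph{Step 2: the commutation, via fixed points.} Set $A:=\underline R\,\overline R F_0$ and $B:=\overline R\,\underline R F_0$. I would prove the two dual statements $\overline R A=A$ and $\underline R B=B$, and then conclude $A=B$ as follows: from $\overline R F_0\ge F_0$ and order–preservation, $A=\underline R\,\overline R F_0\ge \underline R F_0$, so applying $\overline R$ and using $\overline R A=A$ gives $A=\overline R A\ge \overline R\,\underline R F_0=B$; symmetrically, from $\underline R F_0\le F_0$ one gets $B=\overline R\,\underline R F_0\le \overline R F_0$, and applying $\underline R$ with $\underline R B=B$ gives $B=\underline R B\le \underline R\,\overline R F_0=A$; hence $A=B=:\frak R F_0$. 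Thus everything comes down to $\overline R A=A$ (the claim $\underline R B=B$ is its exact dual, obtained by interchanging $\le$ with $\ge$, $\inf$ with $\sup$, and $H_{max}$ with $H_{min}$). Since $\overline R\ge\mathrm{id}$, only
\[
\inf_{q\le p}\max\big\{A(q),H_{max}(q)\big\}\ \le\ A(p)=\sup_{s\ge p}\min\big\{\overline R F_0(s),H_{min}(s)\big\}
\]
needs an argument. The approach: fix $p$ and $\e>0$; pick $s_\e\ge p$ almost realising the supremum defining $A(p)$, together with the point $\le s_\e$ almost realising the inner infimum defining $\overline R F_0(s_\e)$ (by coercivity of $H_{max}$ this inner infimum is a minimum over a compact set); then descend from $p$ to a test point $q_\e\le p$ at which $H_{max}$ is within $\e$ of $\inf_{q\le p}H_{max}(q)=H_-(p)$, while controlling $A(q_\e)$ by re-using the \emph{same} inner minimiser. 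The point to be extracted is that $A$, being of the form $\underline R(\overline R F_0)$, is already saturated from above against $H_{max}$ along monotone descents, so that $A(q_\e)\le A(p)+\e$. Making one choice of test points work uniformly in $p$, and reconciling it with the $\min/\max$ structure of $H_{min},H_{max}$ across the $N$ branches, is the main obstacle; it is precisely where the one–branch argument of \cite{FIM1} must be upgraded.

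\emph{Step 3: the remaining assertions.} Once $A=B=:\frak R F_0$ is established, $\frak R$ is order–preserving as a composition of order–preserving operators. The bound $\frak R F_0\ge H_-$ is immediate from the representation $\frak R F_0=\overline R(\underline R F_0)$ and fact (i). Finally, fact (ii) gives $\overline R\max(F_0,H_-)=\overline R F_0$; applying $\underline R$ to both sides yields $\underline R\,\overline R\max(F_0,H_-)=\underline R\,\overline R F_0$, i.e. $\frak R\max(F_0,H_-)=\frak R F_0$. This closes the proof modulo the fixed-point identity of Step 2, which I expect to be the heart of the matter.
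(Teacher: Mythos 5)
Your Steps~1 and~3 are essentially sound and reproduce the paper's auxiliary lemmas: the monotonicity, idempotence and sandwich properties are Lemmas \ref{lem:compR} and \ref{lem:proj}, your fact (ii) is Lemma \ref{lem:n1}, and your derivations of $\frak R F_0\ge H_-$ and $\frak R\max(F_0,H_-)=\frak R F_0$ from the commutation identity are correct (indeed slightly cleaner than the paper's \eqref{eq:007}). The problem is Step~2, where the whole weight of the theorem sits, and it fails in two distinct ways.

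First, even granting the fixed-point identities $\overline R A=A$ and $\underline R B=B$ for $A:=\underline R\,\overline R F_0$ and $B:=\overline R\,\underline R F_0$, your deduction of $A=B$ is logically broken: the first half of your ``symmetric'' argument gives $A=\overline R A\ge \overline R\,\underline R F_0=B$, and the second half gives $B=\underline R B\le \underline R\,\overline R F_0=A$ --- these are the \emph{same} inequality $A\ge B$ written twice. The reverse inequality $A\le B$ is never established, and it does not follow from order-preservation, idempotence and the sandwich alone. Second, the fixed-point identity $\overline R A=A$ itself (equivalently, that $\underline R\,\overline R F_0$ is super-relaxed) is only sketched, and your own sketch flags the crux --- showing $A(q_\e)\le A(p)+\e$ along a descent $q_\e\le p$, against the fact that $A$ is nonincreasing --- as an unresolved ``main obstacle.'' This is precisely the point at which the authors state that the one-branch argument of \cite{FIM1} does \emph{not} upgrade directly to $N$ branches: Section \ref{sec:4} opens by saying they are unable to prove $\underline R\,\overline R F_0=\overline R\,\underline R F_0$ directly, and they instead identify $\underline R F_0=F_0\underline G$ and $\overline R F_0=F_0\overline G$ with actions of the Godunov semi-fluxes (for semi-coercive $F_0$), obtain the commutation from the composition identity $(F_0\overline G)\underline G=F_0G=(F_0\underline G)\overline G$ resting on the pointwise intersection Lemma \ref{lem:n1e}, and only then reduce the general case to the semi-coercive one via $\max(F_0,H_-)$. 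Your proposal replaces this machinery with an unproved claim plus an invalid order-theoretic conclusion, so the heart of the theorem is missing.
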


In Section \ref{sec:3}, we will introduce another relaxation formula using the Godunov's fluxes while in Section \ref{sec:7}, we introduce a third equivalent formulation based on Riemann problems.

\subsection{Weak and strong viscosity solutions}
As mentioned in the introduction, we can consider two types of solutions for Hamilton-Jacobi equations on junctions. The first one, called weak viscosity solution requires that either the junction or the equation is satisfied at the junction point (see Definition \ref{defi::1}), while the second one, called strong viscosity solutions requires that the junction condition is always satisfied (see Definition \ref{defi::2}). 

The second main result of the paper is to show that a function is a weak viscosity solution of \eqref{eq::m10} if and only if it is a strong viscosity solution of the same equation but with $F_0$ replaced by  the relaxation of $F_0$, namely $\frak R F_0$. We refer to Proposition \ref{pro:wtostr} for the precise result.\bigskip

\paragraph{Organization of the paper}
In Section \ref{sec:3}, we introduce the Godunov relaxation and prove some properties that will be useful to study the relation of this relaxation with the relaxation operator $\frak R$. In Section \ref{sec:4} we show that the Godunov relaxation and the relaxation operator $\frak R$ coincide and we prove further properties on the relaxation operator.
Section \ref{sec:5} is devoted to the study of viscosity solutions for \eqref{eq::m10}. We show in particular that a function is a $F_0$-weak solution iff it is a $\frak R F_0$-strong solution to the same equation. In Section \ref{sec:6}, we state a comparison principle for \eqref{eq::m10}. Finally, in Section \ref{sec:7}, we propose a third relaxation formula based on Riemann problems and we show that it coincides with the relaxation operator $\frak R$.
\section{Godunov relaxation}\label{sec:3}
In this section, we introduce the Godunov relaxation. The precise definition is given in Subsection \ref{subsec:3.1}, while in Subsection \ref{subsec:3.2}, we introduce the semi-Godunov relaxation and present some important properties that will be useful in the sequel, in particular in Section \ref{sec:4} for the study of the relation between Godunov relaxation and the relaxation operator $\frak R$. 

Il all this section, we will assume that the junction condition satisfies the semi-coercivity assumption:

\begin{equation}\label{eq::p63}
F_0(p)\to +\infty \quad \mbox{as}\quad \min_{\alpha=1,\dots,N} p^\alpha \to -\infty .
\end{equation}

\subsection{Definition and properties of the Godunov relaxation}\label{subsec:3.1}

Recall that the Godunov flux associated to the Hamiltonian (or flux) $H^\alpha$ is given for $p,q\in\R$ by
\begin{equation}\label{eq::reg43bis}
G^\alpha(p,q)=\left\{\begin{array}{ll}
\displaystyle \min_{[p,q]}H^\alpha & \quad \mbox{if}\quad p\le q,\\
\displaystyle \max_{[q,p]}H^\alpha & \quad \mbox{if}\quad p\ge q.
\end{array}\right.
\end{equation}
In particular, $G^\alpha$ is non-decreasing in the first variable and non-increasing in the second one. Moreover, we have $G^\alpha(p,p)=H^\alpha(p)$.
We  define next the action of the Godunov flux on a semi-coercive, continuous and non-increasing function $F_0$.

\begin{Proposition}[Godunov's relaxation]\label{pro:n1}
Let $H$ satisfy \eqref{eq::p00} and  consider $F_0$  satisfying \eqref{eq::m11} and the semi-coercivity property \eqref{eq::p63}.
 Let $p\in\R^N$, then the following properties hold true.
 \begin{enumerate}[label=(\roman*)]
 \item \label{lambdaq}
   There exists at least one $q \in \R^N$ such that $F_0(q)=G^\alpha(q^\a,p^\a)$ for all $\alpha=1,\dots,N$. The common value is denoted by  $\lambda_q$.
 \item \label{lambda}
   The value $\lambda_q$ defined above is independent on $q$. We denote this unique value by 
   $$\lambda=\lambda(p)=:(F_0G)(p).$$
 \end{enumerate}
\end{Proposition}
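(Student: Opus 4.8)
The plan is to solve, for each fixed $p\in\R^N$, the system of $N$ scalar equations $F_0(q)=G^\alpha(q^\alpha,p^\alpha)$ by a fixed-point / monotonicity argument, and then to establish independence of the common value $\lambda_q$ on the particular solution $q$ by a comparison-type argument exploiting the monotonicity of $F_0$ and the monotonicity properties of $G^\alpha$ (non-decreasing in the first slot, non-increasing in the second).

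For part \ref{lambdaq}, first I would reduce the system to a one-parameter problem: for a candidate common value $\mu\in\R$ I seek $q=q(\mu)$ with $G^\alpha(q^\alpha,p^\alpha)=\mu$ for each $\alpha$ and $F_0(q(\mu))=\mu$. Fix $\alpha$ and $p^\alpha$; the map $r\mapsto G^\alpha(r,p^\alpha)$ is non-decreasing, continuous, equals $H^\alpha(p^\alpha)$ at $r=p^\alpha$, satisfies $G^\alpha(r,p^\alpha)=\max_{[p^\alpha,r]}H^\alpha\to+\infty$ as $r\to+\infty$ by coercivity \eqref{eq::p00}, and is bounded below (by $\min_{r'\le p^\alpha}H^\alpha=H^\alpha_-(p^\alpha)$, in the notation of \eqref{eq::rep1}, as $r\to-\infty$). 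Hence for every $\mu\ge H^\alpha_-(p^\alpha)$ there is a (generically non-unique, since $G^\alpha$ may be flat) value $r$ with $G^\alpha(r,p^\alpha)=\mu$; I would fix the canonical choice $q^\alpha(\mu):=\sup\{r: G^\alpha(r,p^\alpha)\le \mu\}$, which is non-decreasing and left/right continuous in $\mu$, hence continuous where it matters, and for $\mu< H^\alpha_-(p^\alpha)$ set $q^\alpha(\mu)=-\infty$ formally (this branch will be excluded below). Then $\mu\mapsto q(\mu)=(q^1(\mu),\dots,q^N(\mu))$ is non-decreasing, so $\mu\mapsto F_0(q(\mu))$ is non-increasing (by \eqref{eq::m11}), and $\mu-F_0(q(\mu))$ is non-decreasing; as $\mu\to+\infty$ each $q^\alpha(\mu)\to+\infty$ so $F_0(q(\mu))$ stays bounded while $\mu\to+\infty$, giving $\mu-F_0(q(\mu))\to+\infty$; as $\mu\downarrow\max_\alpha H^\alpha_-(p^\alpha)=H_-(p)$, some $q^\alpha(\mu)\to-\infty$, and the semi-coercivity \eqref{eq::p63} forces $F_0(q(\mu))\to+\infty$, so $\mu-F_0(q(\mu))\to-\infty$. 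By the intermediate value theorem (using continuity of $F_0$ and of $q(\cdot)$ on the relevant range, with a small argument to handle the possible jumps of $q^\alpha(\mu)$ across flat parts of $G^\alpha$) there is $\mu$ with $F_0(q(\mu))=\mu$; setting $q:=q(\mu)$ and $\lambda_q:=\mu$ proves \ref{lambdaq}.

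For part \ref{lambda}, suppose $q$ and $\tilde q$ both solve the system, with common values $\lambda_q$ and $\lambda_{\tilde q}$; I claim $\lambda_q=\lambda_{\tilde q}$. Assume for contradiction $\lambda_q<\lambda_{\tilde q}$. For each $\alpha$ we have $G^\alpha(q^\alpha,p^\alpha)=\lambda_q<\lambda_{\tilde q}=G^\alpha(\tilde q^\alpha,p^\alpha)$; since $r\mapsto G^\alpha(r,p^\alpha)$ is non-decreasing this yields $q^\alpha\le \tilde q^\alpha$ for every $\alpha$ — more precisely $q^\alpha$ lies in the sublevel set $\{r:G^\alpha(r,p^\alpha)\le\lambda_q\}$ whose supremum is $\le$ any point where the value is $\lambda_{\tilde q}>\lambda_q$. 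Hence $q\le\tilde q$ componentwise, so $F_0(q)\ge F_0(\tilde q)$ by \eqref{eq::m11}, i.e. $\lambda_q\ge\lambda_{\tilde q}$, contradicting $\lambda_q<\lambda_{\tilde q}$. The symmetric argument rules out $\lambda_q>\lambda_{\tilde q}$, so $\lambda_q=\lambda_{\tilde q}=:\lambda(p)=(F_0G)(p)$, as asserted.

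The main obstacle is the non-injectivity of $r\mapsto G^\alpha(r,p^\alpha)$: on flat pieces of $H^\alpha$ this map is constant, so $q^\alpha$ is not determined by $\lambda$ and the IVT in part \ref{lambdaq} must be run with care (one must check that the non-increasing function $\mu\mapsto F_0(q(\mu))$ actually meets the diagonal $\mu\mapsto\mu$ even though $q(\cdot)$ may jump; a standard fix is to replace $F_0(q(\mu))$ by an upper/lower regularization at jump points, or to argue with the closed graph of the sublevel-set multifunction). Once one is comfortable that the relevant monotone functions are semicontinuous in the right direction, both parts are short; the independence claim \ref{lambda} is essentially immediate from monotonicity as sketched above and is not the delicate point.
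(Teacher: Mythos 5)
Your part (ii) is exactly the paper's argument (monotonicity of $G^\alpha(\cdot,p^\alpha)$ forces $q_1\le q_2$, then monotonicity of $F_0$ gives the contradiction), and your overall strategy for part (i) — reduce to a one-parameter problem in the common value $\mu$ and apply the intermediate value theorem to $\mu\mapsto \mu-F_0(q(\mu))$ — is the same as the paper's. But your implementation of part (i) has a genuine gap at the lower end of the $\mu$-range, and it is precisely the point you dismiss as "a small argument". Since each $H^\alpha$ is continuous and coercive, the infimum $H^\alpha_-(p^\alpha)=\inf_{r\le p^\alpha}H^\alpha(r)$ is always attained at some finite $r_0^\alpha$, so $r\mapsto G^\alpha(r,p^\alpha)$ is \emph{constant equal to its minimum on the unbounded half-line} $(-\infty,r_0^\alpha]$. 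Consequently your generalized inverse $q^\alpha(\mu)=\sup\{r:G^\alpha(r,p^\alpha)\le\mu\}$ stays \emph{finite} as $\mu\downarrow H_-(p)$; your claim that "some $q^\alpha(\mu)\to-\infty$" there is false, so you cannot invoke semi-coercivity \eqref{eq::p63} to get $\mu-F_0(q(\mu))\to-\infty$, and the IVT may find no zero at all. Concretely, take $N=1$, $H(r)=r^2$, $p=0$, $F_0(q)=-q-10$ (which satisfies \eqref{eq::m11} and \eqref{eq::p63}): then $G(r,0)=0$ for $r\le 0$ and $G(r,0)=r^2$ for $r\ge 0$, so $q(\mu)=\sqrt{\mu}$ and $\mu-F_0(q(\mu))=\mu+\sqrt{\mu}+10\ge 10>0$ on the whole admissible range $\mu\ge 0$. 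Your function never vanishes, yet the proposition is true: the solution is $q=-10$, sitting deep inside the unbounded flat piece of $G(\cdot,0)$ at the bottom level $\mu=H_-(0)=0$, exactly where your canonical selection cannot see it.

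The interior jumps of $q^\alpha(\cdot)$ (over compact flat pieces of $H^\alpha$) can indeed be repaired by interpolating along the flat box, since $G^\alpha(\cdot,p^\alpha)$ is constant there; but the bottom-level defect cannot, because there the flat piece is a half-line and one must use the semi-coercivity of $F_0$ \emph{inside} it. This is why the paper does not work with the bare Godunov flux: it first replaces $G^\alpha(q^\alpha,p^\alpha)$ by the strictly increasing perturbation $K^\alpha_\varepsilon(q^\alpha)=G^\alpha(q^\alpha,p^\alpha)+\varepsilon(1+\tanh q^\alpha)$, for which the inverse $q^\alpha_{\lambda,\varepsilon}$ is single-valued, continuous, and genuinely tends to $-\infty$ as $\lambda$ decreases to $\inf K^\alpha_\varepsilon$; the IVT then applies for each $\varepsilon>0$, and a uniform bound on $(\lambda^*_\varepsilon,q_\varepsilon)$ (obtained from semi-coercivity of $F_0$ and coercivity of $G^\alpha(\cdot,p^\alpha)$ at $+\infty$) allows passage to the limit $\varepsilon\to 0$. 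You should either adopt such a regularization or replace your selection $q(\mu)$ by a genuine analysis of the level-set multifunction including its unbounded bottom stratum; as written, the existence proof does not close.
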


\begin{proof}
In all the proof, $p\in \R^N$ is fixed.

We begin to prove \ref{lambdaq}. In order to get an increasing function, we define for $\varepsilon \ge 0$
$$K^\a_\e(q^\a)=G^\a(q^\a,p^\a)+\e(1+\tanh q^\a).$$
For  $\varepsilon>0$ and $\displaystyle{\lambda> \lambda_\e :=\max_{\a\in\{1,\dots,N\}}\inf K^\a_\e}\ge \max_{\alpha=1,\dots,N} \inf K^\alpha_0$, we define $q^\a_{\lambda,\e}$ as the unique value such that $K^\a_\e(q^\a_{\lambda,\e})=\lambda$. Since $K^\a_\e$ is increasing, we deduce that $\lambda\mapsto q^\a_{\lambda,\e}$ is increasing and continuous. We then define the continuous and increasing function, for $\lambda>\lambda_\e$
$$N_\e(\lambda)=\lambda-F_0(q_{\lambda,\e}).$$
Since $\min_{\a} (q_{\lambda,\e}^\a)\to -\infty$ as $\lambda\to\lambda_\e$, we deduce, by semi-coercivity of $F_0$ that $\lim_{\lambda\to\lambda_\e}N_\e(\lambda)=-\infty$. In the same way, since $q_{\lambda,\e}^\a\to +\infty$ as $\lambda\to+\infty$ for all $\a\in\{1,\dots,N\}$, we deduce that
$\lim_{\lambda\to+\infty}N_\e(\lambda)\ge \lim_{\lambda\to+\infty}\lambda -F_0(0)=+\infty$. We can then define
 $\lambda_\e^*$ as the unique value such that $N_\e(\lambda_\e^*)=0$. In particular, since $\e \mapsto q^\a_{\lambda,\e}$ is decreasing, we deduce that $\e\mapsto N_\e(\lambda)$ is non-increasing and so $\e\mapsto \lambda^*_\e$ is non-decreasing. This implies that $\lambda^*_\e$ is uniformly bounded for $\e\le 1$, by a constant depending only on $p$.   We then set
$$q_\e=q_{\lambda_\e,\e},$$
which satisfies
\begin{equation}\label{eq:01}
\lambda_\e^*=F_0(q_\e)=K_\e^\a(q^\a_\e)=G^\a(q^\a_\e,p^\a)+\e(1+\tanh q^\a_\e)\quad \textrm{ for all }\a=1,\dots, N.
\end{equation}
By semi-coercivity of $F_0$ and by the fact that $G(q^\a,p^\a)\to +\infty$ as $q^\a\to+\infty$, we deduce, using also that $\lambda^*_\e$ is uniformly bounded, that $q_\e$ is uniformly bounded. Up to extract a subsequence, we can then assume that $(\lambda^*_\e,q_\e)\to(\lambda^*,q)$ as $\e\to0$.  Passing to the limit in \eqref{eq:01}, we get, by continuity, that
$$\lambda^*=F_0(q)=G^\a(q^\a,p^\a)\quad \textrm{ for all }\a=1,\dots, N,
$$
which proves \ref{lambdaq}.

We now turn to \ref{lambda}. Assume that there exists $q_1, q_2$ such that $\lambda_{q_1}<\lambda_{q_2}$. We then have for all $\a\in \{1,\dots,N\}$
$$F_0(q_1)=G^\a(q^\a_1,p^\a)=\lambda_{q_1}<\lambda_{q_2}=F_0(q_2)=G^\a(q^\a_2,p^\a).$$
By monotonicity of the Godunov fluxes, we deduce that $q_1^\a<q_2^\a$ for $\a\in \{1,\dots,N\}$. Using the monotonicity of $F_0$, this implies that
$$\lambda_{q_1}=F_0(q_1)\ge F_0(q_2)=\lambda_{q_2}$$
which is a contradiction. This ends the proof of the proposition.
\end{proof}

%
%
%
%

In order to study the relation between the relaxation operator $\frak R$ and the Godunov relaxation, we need to introduce the Godunov semi-fluxes.
\subsection{Godunov semi-fluxes}\label{subsec:3.2}

For $\a=1,\dots, N$, we introduce the Godunov semi-fluxes, $\underline G^\a$ and $\overline G^\a$, which are set-valued applications defined by
\[ \underline G^\a(q^\a,p^\a)=
  \begin{cases}
    \{-\infty\}& \text{ if } q^\a<p^\a \medskip\\
    {[-\infty, H^\a(p^\a)]}&\text{ if }q^\a=p^\a, \medskip\\
   \displaystyle \left\{\max_{[p^\a,q^\a] }H^\a \right\}&\text{ if } q^\a>p^\a
  \end{cases}
\]
and
\[
  \overline G^\a(q^\a,p^\a)=
  \begin{cases}
    \displaystyle \left\{\min_{[q^\a,p^\a] }H^\a \right\}&\text{ if }q^\a<p^\a, \medskip\\
    {[ H^\a(p), +\infty]}&\text{ if }q^\a=p^\a, \medskip\\
    \{+\infty\} &\text{ if } q^\a>p^\a.
  \end{cases}
\]
As before, we can define the action of these semi-fluxes on non-increasing semi-coercive continuous functions. 
\begin{Proposition}[Lower Godunov's relaxation]\label{pro:n2}
Let $H$ satisfy \eqref{eq::p00} and  $F_0$  satisfy \eqref{eq::m11} and the semi-coercivity property \eqref{eq::p63}.
 Let $p\in\R^N$, then the following properties hold true.
 \begin{enumerate}[label=(\roman*)]
 \item \label{lambdaq2}
   There exists at least one $q \in \R^N$ such that $F_0(q)\in \underline G^\alpha(q^\a,p^\a)$ for all $\alpha=1,\dots,N$. The value $F_0(q)$ is denoted by  $\lambda_q$. Moreover, all the $q$ satisfying the previous property satisfy $q\ge p$.
 \item \label{lambda2}
   The value $\lambda_q$ defined above is independent on $q$. We denote this unique value by 
   $$\lambda=\lambda(p)=:(F_0\underline G)(p).$$
 \end{enumerate}
\end{Proposition}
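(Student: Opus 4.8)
The plan is to mimic the structure of the proof of Proposition~\ref{pro:n1}, adapting it to the set-valued semi-fluxes $\underline G^\a$. The essential observation is that the condition $F_0(q)\in\underline G^\a(q^\a,p^\a)$ already forces $q^\a\ge p^\a$ (since $\underline G^\a(q^\a,p^\a)=\{-\infty\}$ when $q^\a<p^\a$ and $F_0$ is real-valued), which immediately gives the last assertion of \ref{lambdaq2} once existence is established. On the region $q^\a\ge p^\a$ the semi-flux $\underline G^\a(q^\a,p^\a)$ agrees, at its finite values, with the ordinary Godunov flux $G^\a(q^\a,p^\a)=\max_{[p^\a,q^\a]}H^\a$ for $q^\a>p^\a$, and at $q^\a=p^\a$ it is the whole interval $[-\infty,H^\a(p^\a)]$. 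So the point is that $\underline G^\a$ is the ``left half'' of $G^\a$ completed by a vertical segment at $q^\a=p^\a$; this is precisely the right object to make the analogue of Proposition~\ref{pro:n1} work in a one-sided fashion.

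First I would prove \ref{lambdaq2}. As in Proposition~\ref{pro:n1}, introduce the perturbed, strictly increasing maps $K^\a_\e(q^\a)=\max_{[p^\a,q^\a]}H^\a + \e(1+\tanh q^\a)$ defined for $q^\a\ge p^\a$ — i.e.\ using the $q^\a>p^\a$ branch of $\underline G^\a$ extended continuously down to $q^\a=p^\a$ where it takes the value $H^\a(p^\a)+\e(1+\tanh p^\a)$. For $\lambda$ above $\lambda_\e:=\max_\a \inf_{q^\a\ge p^\a}K^\a_\e(q^\a)=\max_\a K^\a_\e(p^\a)$ (the inf is attained at the left endpoint since $K^\a_\e$ is increasing) define $q^\a_{\lambda,\e}$ by $K^\a_\e(q^\a_{\lambda,\e})=\lambda$, and then set $N_\e(\lambda)=\lambda-F_0(q_{\lambda,\e})$ exactly as before. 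The same monotonicity/coercivity bookkeeping applies: $\min_\a q^\a_{\lambda,\e}\to$ its left-endpoint value (not $-\infty$ now, but that is harmless — one checks $N_\e(\lambda_\e^+)\le 0$ directly since there $\lambda=K^\a_\e(p^\a)$ for some $\a$ while $F_0(q_{\lambda_\e,\e})\ge F_0$ evaluated at a point $\ge p$) and $N_\e(\lambda)\to+\infty$ as $\lambda\to\infty$, so a zero $\lambda^*_\e$ exists; uniform boundedness of $(\lambda^*_\e,q_\e)$ follows from semi-coercivity of $F_0$ plus $\max_{[p^\a,q^\a]}H^\a\to+\infty$ as $q^\a\to+\infty$, and passing to the limit $\e\to0$ yields $q\ge p$ with $F_0(q)=\max_{[p^\a,q^\a]}H^\a$ if $q^\a>p^\a$, or — in the limiting case $q^\a=p^\a$ — $F_0(q)=\lim K^\a_0(q^\a_\e)\le H^\a(p^\a)$ by continuity, hence $F_0(q)\in[-\infty,H^\a(p^\a)]=\underline G^\a(p^\a,p^\a)$ in all cases. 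The argument for $q\ge p$ for \emph{every} admissible $q$ is immediate from the sign convention above.

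For \ref{lambda2} I would argue by contradiction as in Proposition~\ref{pro:n1}: suppose $q_1,q_2$ are both admissible with $\lambda_{q_1}<\lambda_{q_2}$. Both satisfy $q_i\ge p$. For each $\a$, $\lambda_{q_i}=F_0(q_i)\in\underline G^\a(q^\a_i,p^\a)$. The monotonicity one needs is that $\underline G^\a(\cdot,p^\a)$ is, in the appropriate set-valued sense, non-decreasing in its first argument on $[p^\a,\infty)$: if $\mu_1\in\underline G^\a(r_1,p^\a)$, $\mu_2\in\underml G^\a(r_2,p^\a)$ with $\mu_1<\mu_2$ and $r_1,r_2\ge p^\a$, then $r_1\le r_2$ (this is clear: at $r=p^\a$ the values are $(-\infty,H^\a(p^\a)]$ and for $r>p^\a$ the single value $\max_{[p^\a,r]}H^\a$ is non-decreasing in $r$; a strictly larger output cannot come from a strictly smaller input). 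Applying this coordinatewise gives $q_1\le q_2$, whence $F_0(q_1)\ge F_0(q_2)$ by monotonicity of $F_0$, i.e.\ $\lambda_{q_1}\ge\lambda_{q_2}$, a contradiction. This closes the proof, and we set $(F_0\underline G)(p):=\lambda$.

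\textbf{Main obstacle.} The only delicate point, compared with Proposition~\ref{pro:n1}, is the behaviour at the boundary $q^\a=p^\a$ where $\underline G^\a$ jumps to a whole interval: one must make sure the limiting procedure in \ref{lambdaq2} lands correctly inside $[-\infty,H^\a(p^\a)]$ rather than overshooting, and that the monotonicity argument in \ref{lambda2} handles the case where one of the $q^\a_i$ equals $p^\a$. Both are resolved by noting that $K^\a_0(q^\a)=\max_{[p^\a,q^\a]}H^\a$ is continuous and non-decreasing on $[p^\a,\infty)$ with value $H^\a(p^\a)$ at the left endpoint, so passing to limits along the graph of $K^\a_0$ automatically respects the interval at $q^\a=p^\a$; everything else is a routine transcription of the previous proof.
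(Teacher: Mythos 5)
Your proof of part \ref{lambda2} is correct and coincides with the paper's argument (the paper simply refers back to the uniqueness step of Proposition \ref{pro:n1}; your set-valued monotonicity lemma for $\underline G^\a(\cdot,p^\a)$ on $[p^\a,+\infty)$, including the boundary case $q_i^\a=p^\a$, is exactly what is needed). The observation that every admissible $q$ satisfies $q\ge p$, because $\underline G^\a(q^\a,p^\a)=\{-\infty\}$ when $q^\a<p^\a$ while $F_0$ is real-valued, is also correct.

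The existence argument in part \ref{lambdaq2}, however, has a genuine gap: the claim $N_\e(\lambda_\e^+)\le 0$ is false in general, and the justification you give for it (``$F_0(q_{\lambda_\e,\e})\ge F_0$ evaluated at a point $\ge p$'') points the wrong way, since $F_0$ is nonincreasing and therefore its value at a point $\ge p$ is $\le F_0(p)$. Concretely, $q_{\lambda_\e,\e}\ge p$, hence $N_\e(\lambda_\e^+)=\lambda_\e-F_0(q_{\lambda_\e,\e})\ge \lambda_\e-F_0(p)$, and $\lambda_\e$ is approximately $\max_\a H^\a(p^\a)$; this is strictly positive as soon as $F_0(p)<\max_\a H^\a(p^\a)$. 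In that (generic) situation $N_\e$ is increasing and positive on all of $(\lambda_\e,+\infty)$, so it has no zero and your intermediate value argument produces nothing. For instance take $N=2$, $H^1(q)=H^2(q)=q^2$, $p=(2,0)$ and $F_0(q)=\max(2,-q^1-q^2)$ (which is semi-coercive and equals $2$ near $p$): the admissible $q$ is $(2,\sqrt 2)$, with the first coordinate \emph{frozen} at $p^1=2$ and the vertical segment $[-\infty,H^1(2)]$ of $\underline G^1$ invoked, whereas your $N_\e(\lambda)=\lambda-2>0$ for all $\lambda>\lambda_\e\approx 4$. The missing idea is precisely that for the indices $\a$ with $H^\a(p^\a)\ge F_0(p)$ one must set $q^\a=p^\a$ and use the interval branch of $\underline G^\a$, rather than solve $K^\a_\e(q^\a)=\lambda$. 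This is how the paper proceeds: it splits the indices into $I^-=\{\a:\; F_0(p)\le H^\a(p^\a)\}$, where $q^\a:=p^\a$, and $I^+=\{\a:\; F_0(p)>H^\a(p^\a)\}$, where the Proposition \ref{pro:n1} construction is run in the reduced set of variables; the frozen coordinates are then checked a posteriori via $F_0(q)\le F_0(p)\le H^\a(p^\a)$. Your one-shot intermediate value scheme over all coordinates simultaneously cannot be repaired without this case splitting (or an equivalent device).
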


\begin{proof}
Let $p\in \R^N$. If $\displaystyle{F_0(p)\le \min_{\a=1,\dots,N}H^\a(p^\a)}$, then we can choose $q^\a=p^\a$ for all $\a=1,\dots,N$ and we get $F_0(q)=F_0(p)\in \underline G^\a(q^\a,p^\a)=[-\infty,H^\a(p^\a)]$. 
If $\displaystyle{F_0(p)> \min_{\a=1,\dots,N}H^\a(p^\a)}$, we denote by $I^-:=\{\a, \; F_0(p)\le H^\a(p^\a)\}$ and by $I^+:=\{\a, \; F_0(p)> H^\a(p^\a)\}$. For $\a\in I^-$, we set $q^\a=p^\a$. In particular, $F_0(p)\in \underline G^\a(q^\a,p^\a)$ for $\a\in I^-$. Arguing as in the proof of Proposition \ref{pro:n1}, but working only on the indices $\alpha\in I^+$, we can construct $q^\a\ge p^\a$ such that $F_0(q)=G^\a(q^\a,p^\a)=\displaystyle \max_{[p^\alpha,q^\alpha]}H^\alpha$  for all $\a\in I^+$. Since we also have $F_0(q)\le F_0(p)\le H^\a(p^\a)$ for all $\a\in I^-$, we get \ref{lambdaq2}.
\medskip

The proof of \ref{lambda2} is similar to the one of Proposition \ref{pro:n1} \ref{lambda}.

\end{proof}

In the same way, we have the following result

\begin{Proposition}[Upper Godunov's relaxation]\label{pro:n3}
Let $H$ satisfy \eqref{eq::p00} and  $F_0$  satisfy \eqref{eq::m11} and the semi-coercivity property \eqref{eq::p63}.
 Let $p\in\R^N$, then the following properties hold true.
 \begin{enumerate}[label=(\roman*)]
 \item \label{lambdaq}
   There exists at least one $q \in \R^N$ such that $F_0(q)\in \overline G^\alpha(q^\a,p^\a)$ for all $\alpha=1,\dots,N$. The value $F_0(q)$ is denoted by  $\lambda_q$. Moreover, all the $q$ satisfying the previous property satisfy $q\le p$.
 \item \label{lambda}
   The value $\lambda_q$ defined above is independent on $q$. We denote this unique value by 
   $$\lambda=\lambda(p)=:(F_0\overline G)(p).$$
 \end{enumerate}
\end{Proposition}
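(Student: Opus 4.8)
The plan is to run the argument of Proposition~\ref{pro:n2} upside down: $\overline G^\a(\cdot,p^\a)$ plays on the half-line $\{q^\a\le p^\a\}$ the role that $\underline G^\a(\cdot,p^\a)$ plays on $\{q^\a\ge p^\a\}$. Concretely, on $\{q^\a<p^\a\}$ one has $\overline G^\a(q^\a,p^\a)=\min_{[q^\a,p^\a]}H^\a=G^\a(q^\a,p^\a)$, at $q^\a=p^\a$ it is the ray $[H^\a(p^\a),+\infty]$, and for $q^\a>p^\a$ it is $\{+\infty\}$. The last fact already gives the final assertion of (i): since $F_0$ is real-valued, $F_0(q)\in\overline G^\a(q^\a,p^\a)$ forbids $q^\a>p^\a$, so every admissible $q$ satisfies $q\le p$.

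For existence in (i), I would first dispose of the trivial case $F_0(p)\ge \max_{\a=1,\dots,N}H^\a(p^\a)$, in which $q=p$ works because $F_0(p)\in[H^\a(p^\a),+\infty]=\overline G^\a(p^\a,p^\a)$ for every $\a$. Otherwise I set $I^+:=\{\a:\ F_0(p)\ge H^\a(p^\a)\}$ and $I^-:=\{\a:\ F_0(p)<H^\a(p^\a)\}\neq\emptyset$, freeze $q^\a=p^\a$ for $\a\in I^+$, and run the fixed-point construction from the proof of Proposition~\ref{pro:n1} on the coordinates $\a\in I^-$, this time letting the auxiliary values decrease below $p^\a$ (with the same treatment of the ``boundary'' indices as in the proof of Proposition~\ref{pro:n2}). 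The semi-coercivity~\eqref{eq::p63} enters exactly as in Proposition~\ref{pro:n1}: because $I^-\neq\emptyset$, as soon as one of the auxiliary coordinates tends to $-\infty$ we have $\min_{\a=1,\dots,N}q^\a\to-\infty$, hence $F_0(q)\to+\infty$, which drives the auxiliary function $N_\e(\lambda)=\lambda-F_0(q_{\lambda,\e})$ to $-\infty$ at its lower endpoint; coercivity of $H$ controls the other endpoint; an intermediate value argument and the passage $\e\to0$ then produce $q\le p$ with $F_0(q)=\min_{[q^\a,p^\a]}H^\a$ for $\a\in I^-$ and $F_0(q)\ge F_0(p)\ge H^\a(p^\a)$ for $\a\in I^+$ (using $q\le p$ and monotonicity of $F_0$), i.e. $F_0(q)\in\overline G^\a(q^\a,p^\a)$ for all $\a$.

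For (ii) I would mimic the uniqueness argument of Proposition~\ref{pro:n1}. Suppose $q_1,q_2$ are admissible with $\lambda_{q_1}<\lambda_{q_2}$; as $\lambda_{q_i}=F_0(q_i)\in\R$, we already know $q_1\le p$ and $q_2\le p$. Fix $\a$. Since $q^\a\mapsto\overline G^\a(q^\a,p^\a)$ is nondecreasing on $(-\infty,p^\a]$ (as $q^\a$ increases to $p^\a$, $\min_{[q^\a,p^\a]}H^\a$ increases up to $H^\a(p^\a)$, the value at $p^\a$ itself being $[H^\a(p^\a),+\infty]$), the strict inequality $F_0(q_1)<F_0(q_2)$ excludes $q_1^\a>q_2^\a$; hence $q_1^\a\le q_2^\a$. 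This holding for all $\a$, we get $q_1\le q_2$, so $F_0(q_1)\ge F_0(q_2)$ by monotonicity of $F_0$, contradicting $\lambda_{q_1}<\lambda_{q_2}$. Therefore $\lambda_q$ does not depend on $q$.

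The only genuine work is the construction in (i) --- arranging that the iterates of Proposition~\ref{pro:n1} stay in the admissible half-space $\{q\le p\}$ while keeping track of the indices on which $q^\a=p^\a$ --- but this is the verbatim analogue of what is already done in the proof of Proposition~\ref{pro:n2}, so it presents no new difficulty.
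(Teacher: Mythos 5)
Your closing claim in (i) (every admissible $q$ satisfies $q\le p$), the trivial case $F_0(p)\ge\max_\a H^\a(p^\a)$, and the uniqueness argument in (ii) are all correct and in line with what the paper intends. The gap is in the existence construction for the remaining case: freezing $q^\a=p^\a$ exactly on $I^+=\{\a:\ F_0(p)\ge H^\a(p^\a)\}$ and solving $F_0(q)=\min_{[q^\a,p^\a]}H^\a$ on $I^-$ is \emph{not} the verbatim analogue of Proposition \ref{pro:n2}, and it can fail. The reason is an asymmetry you glossed over with ``coercivity of $H$ controls the other endpoint'': for $q^\a\le p^\a$ the quantity $\min_{[q^\a,p^\a]}H^\a$ is bounded \emph{above} by $H^\a(p^\a)$ (coercivity is of no help on this side), whereas $F_0(q)\ge F_0(p)$ can be pushed strictly above $H^\a(p^\a)$ by the semi-coercivity of $F_0$ acting through the \emph{other} coordinates. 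When that happens for some $\a\in I^-$, the equation $F_0(q)=\min_{[q^\a,p^\a]}H^\a$ has no solution with $q^\a<p^\a$, and the only admissible choice is $q^\a=p^\a$ with $F_0(q)$ lying in the vertical part $[H^\a(p^\a),+\infty]$; that index must be re-frozen even though it belongs to your $I^-$. Your function $N_\e$ then stays negative on its whole (finite) $\lambda$-range and the intermediate value argument produces nothing.

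Concretely: take $N=2$, $H^1(x)=x^2$, $H^2(x)=(x+100)^2$, $F_0(p)=\max(0,-p^1,-p^2)$ (continuous, nonincreasing, semi-coercive), and $p=(-5,-10)$. Then $F_0(p)=10$, $H^1(p^1)=25$, $H^2(p^2)=8100$, so $I^-=\{1,2\}$ in your scheme. For every $q^1\le-5$ one has $\min_{[q^1,-5]}x^2=25$, so your construction forces $\lambda=25$, hence $q^2=-95$, hence $F_0(q)\ge 95\ne 25$: no sign change, no solution. The actual admissible $q$ is $q=(-5,q^2)$ with $-q^2=(q^2+100)^2$, i.e. $\lambda=-q^2\approx 90.5\in[25,+\infty]=\overline G^1(p^1,p^1)$: branch $1$ uses the vertical segment even though $H^1(p^1)>F_0(p)$. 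To repair the argument you must let the set of frozen indices depend on the unknown value $\lambda$ rather than on $F_0(p)$ --- for instance by incorporating the vertical segment at $q^\a=p^\a$ into the monotone graph $K^\a_\e$, so that its range becomes $[\inf_{(-\infty,p^\a]}H^\a,+\infty]$ and both endpoint limits $N_\e\to-\infty$ (as $\lambda$ decreases to $\max_\a \inf K^\a_\e$, via semi-coercivity) and $N_\e\to+\infty$ (as $\lambda\to+\infty$, since $q_\lambda$ stays bounded) hold --- or by an induction that removes one index from $I^-$ each time the upper endpoint test fails.
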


In order to compose the Godunov semi-fluxes, we have to show that $F_0\underline G $ and $F_0\overline G $ satisfy the same assumptions as $F_0$.
\begin{Proposition}[Properties of $F_0\underline G$ and $F_0\overline G$]
Under the same assumptions,  $F_0\underline G $ and $F_0\overline G $ are continuous, non-increasing and semi-coercive in the sense of  \eqref{eq::p63}.

\end{Proposition}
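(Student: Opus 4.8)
The plan is to prove everything for $F_0\underline G$; the statement for $F_0\overline G$ then follows by the mirror-image argument (reverse all inequalities, exchange $\min$ and $\max$ in the Godunov fluxes, and use the constraint $q\le p$ from Proposition \ref{pro:n3} in place of $q\ge p$). For each $p$ I would fix a realizer $q=q(p)\ge p$ as produced by Proposition \ref{pro:n2}, so that $\lambda:=(F_0\underline G)(p)=F_0(q)\in\R$ and, coordinate by coordinate, either $q^\alpha=p^\alpha$ with $\lambda\le H^\alpha(p^\alpha)$, or $q^\alpha>p^\alpha$ with $\lambda=\max_{[p^\alpha,q^\alpha]}H^\alpha$. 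The uniqueness part of Proposition \ref{pro:n2} will be the tool that turns ``there is a realizer with value $\bar\lambda$'' into ``$(F_0\underline G)(p)=\bar\lambda$''.

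For monotonicity, given $p\le p'$ with realizers $q,q'$ and values $\lambda,\lambda'$, I would argue by contradiction: assuming $\lambda<\lambda'$, I first claim $q\le q'$. If not, say $q^{\alpha_0}>q'^{\alpha_0}$, then $q^{\alpha_0}>q'^{\alpha_0}\ge p'^{\alpha_0}\ge p^{\alpha_0}$, so $\lambda=\max_{[p^{\alpha_0},q^{\alpha_0}]}H^{\alpha_0}$; since $q'^{\alpha_0}\in[p^{\alpha_0},q^{\alpha_0}]$ and $[p'^{\alpha_0},q'^{\alpha_0}]\subseteq[p^{\alpha_0},q^{\alpha_0}]$, both possibilities for $q'^{\alpha_0}$ (equal to $p'^{\alpha_0}$, or strictly above it) force $\lambda'\le\lambda$, a contradiction. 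Hence $q\le q'$, and since $F_0$ is nonincreasing in each argument, $\lambda=F_0(q)\ge F_0(q')=\lambda'$, the desired contradiction. This is the same mechanism as in the uniqueness proofs of Propositions \ref{pro:n1} and \ref{pro:n2}.

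For semi-coercivity, fix $M\in\R$; choose $R_1$ with $H^\alpha(s)>M$ for $|s|>R_1$ (coercivity of the $H^\alpha$) and $R_2$ with $F_0(r)>M$ for $\min_\alpha r^\alpha<-R_2$ (semi-coercivity of $F_0$). If $\min_\alpha p^\alpha<-\max(R_1,R_2)$, attained at $\alpha_0$, then a realizer $q$ either has $q^{\alpha_0}>p^{\alpha_0}$, whence $\lambda\ge H^{\alpha_0}(p^{\alpha_0})>M$, or has $q^{\alpha_0}=p^{\alpha_0}$, whence $\min_\alpha q^\alpha\le p^{\alpha_0}<-R_2$ and $\lambda=F_0(q)>M$.

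I expect the delicate point to be continuity, because of the set-valued behaviour of $\underline G^\alpha$ at $q^\alpha=p^\alpha$. The plan here is a compactness argument. Given $p_n\to p$ with realizers $q_n\ge p_n$ and values $\lambda_n=F_0(q_n)$, I would first bound the $q_n$: $\lambda_n\le F_0(p_n)$ is bounded above, and since $\lambda_n\le\max_{[p_n^\alpha,q_n^\alpha]}H^\alpha$ for every $\alpha$ (with equality whenever $q_n^\alpha>p_n^\alpha$), coercivity of $H^\alpha$ forces $q_n^\alpha$ to stay bounded above, while $q_n^\alpha\ge p_n^\alpha$ keeps it bounded below; hence $(q_n)$, and therefore $(\lambda_n)$, is bounded. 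Along a subsequence with $q_n\to\bar q\ge p$ and $\lambda_n\to\bar\lambda=F_0(\bar q)$, I would check that $\bar\lambda\in\underline G^\alpha(\bar q^\alpha,p^\alpha)$ for each $\alpha$: on coordinates with $\bar q^\alpha>p^\alpha$ this is continuity of $(a,b)\mapsto\max_{[a,b]}H^\alpha$; on coordinates with $\bar q^\alpha=p^\alpha$ the intervals $[p_n^\alpha,q_n^\alpha]$ shrink to $\{p^\alpha\}$, so $\bar\lambda\le H^\alpha(p^\alpha)$, i.e. $\bar\lambda\in[-\infty,H^\alpha(p^\alpha)]$. Thus $\bar q$ is a realizer at $p$, and uniqueness (Proposition \ref{pro:n2}) gives $\bar\lambda=(F_0\underline G)(p)$; since this holds for every subsequential limit and $(\lambda_n)$ is bounded, $(F_0\underline G)(p_n)\to(F_0\underline G)(p)$. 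The two places that need genuine care are the boundedness of the realizers $q_n$ and the passage to the limit on the degenerate coordinates.
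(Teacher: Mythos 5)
Your proposal is correct and follows essentially the same route as the paper's proof: monotonicity by contradiction using the monotonicity of the Godunov fluxes to order the realizers, continuity by a compactness argument on the realizers with a case analysis on the degenerate coordinates $\bar q^\alpha=p^\alpha$, and semi-coercivity by splitting on whether the realizer coordinate equals $p^\alpha$. The only cosmetic difference is that in the boundedness step you should state the operative inequality as $\lambda_n=\max_{[p_n^\alpha,q_n^\alpha]}H^\alpha\ge H^\alpha(q_n^\alpha)$ when $q_n^\alpha>p_n^\alpha$ (a lower bound on $\lambda_n$, not the upper bound you wrote first), which is exactly how the paper argues.
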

\begin{proof}
We only do the proof for $F_0\underline G $ since it is similar for $F_0\overline G $.

We begin to show that $F_0 \underline G$ is non-increasing. Let $p_1$ and $p_2$ such that $p_1\ge p_2$ and assume by contradiction that $(F_0\underline G)(p_1)>(F_0\underline G)(p_2).$
Let $q_1,\; q_2$ be such that 
$F_0(q_i)\in\underline G^\a(q_i^\a,p_i^\a)$ for all $\a\in\{1,\dots,N\}.$ In particular, we have $q_1\ge p_1$, $q_2\ge p_2$ and $F_0(q_1)>F_0(q_2)$. We claim that 
$q_1\ge q_2$. Indeed, if for some $\a$, we have $p_2^\a\le p_1^\a\le q_1^\a<q_2^\a$, then 
$$G^\a(q_2^\a,p_2^\a)=F_0(q_2)<F_0(q_1)\le G^\a(q_1^\a,p_1^\a)\le G^\a(q_2^\a,p_2^\a),$$
which is a contradiction. This implies that $q_1\ge q_2$ and so 
$$(F_0\underline G)(p_1)=F_0(q_1)\le F_0(q_2)=(F_0\underline G)(p_2),$$
which is a contradiction. This implies that $F_0\underline G$ is non-increasing.

\bigskip

We now  show that $F_0\underline G $ is continuous. Let $p_n\to p$ and $q_n\ge p_n$ such that 
$$(F_0\underline G )(p_n)=F_0(q_n)\in \underline G^\a(q_n^\a,p_n^\a)\qquad \text{for all }\a\in\{1,\dots,N\}.$$
We claim that $q_n$ is bounded. Indeed, if (up to a subsequence) $q_n^\beta\to +\infty$ for a certain $\beta\in\{1,\dots,N\}$, then $q_n^\beta>p_n^\beta$ and 
$$F_0(p_n)\ge F_0(q_n)= G^\beta(q_n^\beta,p_n^\beta)\ge G^\beta(q_n^\beta,q_n^\beta)=H^\beta(q_n^\beta).$$
Passing to the limit, we then have $F_0(p)=+\infty$, which is absurd. Then $q_n$ is bounded, and up to extract a subsequence, we can assume that $q_n\to q_0\ge p$.

We claim that $F_0(q_0)\in \underline G^\a(q_0^\a,p^\a)$ for all $\a\in\{1,\dots,N\}$. Indeed, fix $\a\in \{1,\dots,N\}$ 
  and assume first that $q_0^\a>p^\a$. We then have $q_n^\a>p_n^\a$ for $n$ large enough and so $F_0(q_n)=G^\a(q_n^\a,p_n^\a)$. Passing to the limit $n\to +\infty$, we get $F_0(q_0)=G^\a(q_0^\a,p^\a)\in\underline G^\a(q_0^\a,p^\a).$ Assume now that $q_0^\a=p^\a$. We distinguish two cases. On the one hand, if there exists a subsequence $n_j$ such that $q_{n_j}^\a=p_{n_j}^\a$, then $F_0(q_{n_j})\le H^\a(q_{n_j}^\a)=H^\a(p_{n_j}^\a)$. Passing to the limit, this implies that $F_0(q_0)\le H^\a(q_0^\a)=H^\a(p^\a)$ and so $F_0(q_0)\in [-\infty,H^\a(p^\a)]=\underline G(q_0,p_0)$. On the other hand, if $q_n^\a>p_n^\a$ for $n$ large enough, then $F_0(q_n)=G^\a(q_n^\a,p_n^\a)$. Again, passing to the limit $n\to +\infty$, we get $F_0(q_0)=G^\a(q_0^\a,p^\a)=H^\a(p^\a)$ and so $F_0(q_0)\in\underline G^\a(q_0^\a,p^\a).$
  
In all the cases, we then have $F_0(q_0)\in \underline G^\a(q_0^\a,p^\a)$. This implies that $(F_0\underline G)(p)=F_0(q_0)$. Since $(F_0\underline G)(p_n)=F_0(q_n)$ and $F_0(q_n)\to F_0(q_0)$, we recover that $(F_0\underline G)(p_n)\to(F_0\underline G)(p)$ and so $F_0 \underline G$ is continuous.

\bigskip

We end the proof showing that $F_0\underline G$ is semi-coercive. We fix $\a\in \{1,\dots,N\}$ and we want to show that if $p^\a\to -\infty$, then $(F_0\underline G)(p)\to+\infty$. 
Let $M>0$. By coercivity of $H^\a$ and semi-coercivity of $F_0$, there exists $p_0^\a$ such that for all $p\in \R^N$ such that $p^\a\le p_0^\a$, we have
$$H^\a(p^\a)\ge M\quad{\rm and}\quad F_0(p)\ge M.$$
Let $p$ be such that $p^\a\le p_0^\a$. Then there exists $q\ge p$ such that $(F_0\underline G)(p)=F_0(q)\in \underline G^\a(q^\a,p^\a)$. We distinguish two cases. If $q^\a>p^\a$, then 
$$(F_0\underline G)(p)=F_0(q)=G^\a(q^\a,p^\a)\ge H^\a(p^\a)\ge M.$$
If $q^\a=p^\a$, then, since $q^\a\le p_0^\a$, we have
$(F_0\underline G)(p)=F_0(q)\ge M$. Then, in all the cases, we have
$$(F_0\underline G)(p)\ge M$$
and so $(F_0\underline G)$ is semi-coercive.
\end{proof}

The result of the previous proposition allows us to compose the action of $\overline G$ with the action of $\underline G$.
We now want to prove that the action of  $G$ on $F_0$ is in fact the action of $\underline G$ on the action of $\overline G$ on $F_0$. More precisely, we have the following result  
\begin{Proposition}[Composition of Godunov semi-fluxes]\label{pro:n3}
Under the same assumptions, we have 
$$(F_0\overline G)\underline G =F_0G=(F_0\underline G)\overline G.$$
\end{Proposition}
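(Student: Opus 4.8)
The plan is to establish the two equalities $(F_0\underline G)\overline G=F_0G$ and $(F_0\overline G)\underline G=F_0G$ by one and the same argument, exploiting the uniqueness built into Proposition \ref{pro:n1}, Proposition \ref{pro:n2} and its $\overline G$-analogue: to evaluate a composed relaxation at a point $p$ it suffices to exhibit a single admissible witness point, after which the common value is forced. No optimisation is needed; everything reduces to producing the correct intermediate point, coordinate by coordinate.

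The elementary ingredient I would isolate first is a \emph{splitting lemma} for the scalar Godunov flux. Fix $\alpha\in\{1,\dots,N\}$ and $a,b\in\R$ and set $\lambda:=G^\alpha(a,b)$. Then, with $c:=\min(a,b)$, one has $\lambda\in\underline G^\alpha(a,c)$ and $\lambda\in\overline G^\alpha(c,b)$; and, with $c:=\max(a,b)$, one has $\lambda\in\overline G^\alpha(a,c)$ and $\lambda\in\underline G^\alpha(c,b)$. This is proved by distinguishing the two cases $a\le b$ and $a\ge b$ and reading off the definitions of $G^\alpha$, $\underline G^\alpha$ and $\overline G^\alpha$, using only trivial interval inequalities such as $\min_{[a,b]}H^\alpha\le H^\alpha(a)$ and $\max_{[b,a]}H^\alpha\ge H^\alpha(a)$; the degenerate case $a=b$, where $\underline G^\alpha$ and $\overline G^\alpha$ are genuinely set-valued, is checked directly using $\lambda=H^\alpha(a)$. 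This short case analysis is the main — but very mild — obstacle in the proof.

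With the lemma available, the proof of $(F_0\underline G)\overline G=F_0G$ runs as follows. Fix $p\in\R^N$, put $\lambda:=(F_0G)(p)$, and use Proposition \ref{pro:n1} to pick $q\in\R^N$ with $F_0(q)=G^\alpha(q^\alpha,p^\alpha)=\lambda$ for every $\alpha$. Define $s\in\R^N$ by $s^\alpha:=\min(q^\alpha,p^\alpha)$. Applying the splitting lemma coordinatewise gives $\lambda\in\underline G^\alpha(q^\alpha,s^\alpha)$ and $\lambda\in\overline G^\alpha(s^\alpha,p^\alpha)$ for all $\alpha$. Since $q\ge s$ and $F_0(q)=\lambda\in\underline G^\alpha(q^\alpha,s^\alpha)$ for all $\alpha$, the point $q$ is admissible in Proposition \ref{pro:n2} for $F_0\underline G$ at $s$, so $(F_0\underline G)(s)=\lambda$. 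Next, $F_0\underline G$ is continuous, non-increasing and semi-coercive by the preceding proposition, so the upper Godunov relaxation (the $\overline G$-analogue of Proposition \ref{pro:n2}) applies to it; since $s\le p$ and $(F_0\underline G)(s)=\lambda\in\overline G^\alpha(s^\alpha,p^\alpha)$ for all $\alpha$, the point $s$ is admissible there, hence $((F_0\underline G)\overline G)(p)=(F_0\underline G)(s)=\lambda=(F_0G)(p)$.

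The identity $(F_0\overline G)\underline G=F_0G$ is obtained by the mirror-image argument: with the same $q$, set $t^\alpha:=\max(q^\alpha,p^\alpha)$, so that by the splitting lemma $\lambda\in\overline G^\alpha(q^\alpha,t^\alpha)$ and $\lambda\in\underline G^\alpha(t^\alpha,p^\alpha)$ for all $\alpha$; since $q\le t$, the point $q$ is admissible for $F_0\overline G$ at $t$, giving $(F_0\overline G)(t)=\lambda$; and since $t\ge p$ with $(F_0\overline G)(t)=\lambda\in\underline G^\alpha(t^\alpha,p^\alpha)$ for all $\alpha$, the point $t$ is admissible in Proposition \ref{pro:n2} applied to $F_0\overline G$, so $((F_0\overline G)\underline G)(p)=\lambda=(F_0G)(p)$. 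The one thing demanding attention throughout is keeping the orientations consistent — $s\le q$ and $s\le p$ in the first chain, $q\le t$ and $p\le t$ in the second — so that the relevant existence/uniqueness statement, each of which carries exactly such an inequality, can be invoked legitimately.
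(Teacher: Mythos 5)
Your proof is correct. It follows the same overall strategy as the paper --- decompose the full Godunov flux $G^\alpha(q^\alpha,p^\alpha)$ through an intermediate point into an $\underline G^\alpha$-part and an $\overline G^\alpha$-part, then use the existence/uniqueness statements of Propositions \ref{pro:n1}--\ref{pro:n3} to identify the composed value --- but it differs in two useful ways. First, where the paper imports the key composition result from \cite[Lemma 5.7]{FIM1} as a black box (Lemma \ref{lem:n1e}), you prove it directly by exhibiting the intermediate point explicitly as the coordinatewise $\min(q^\alpha,p^\alpha)$ (resp.\ $\max(q^\alpha,p^\alpha)$); your case check of the splitting lemma is accurate, including the degenerate case $a=b$ where the semi-fluxes are genuinely set-valued, so your argument is self-contained. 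Second, where the paper establishes the identity by chasing equalities between \emph{sets} of attainable values, you run a one-directional argument: produce a single admissible witness at each stage ($q$ for $(F_0\underline G)(s)$, then $s$ for $((F_0\underline G)\overline G)(p)$, and mirror-image for the other composition) and invoke the uniqueness clauses to pin down the value. This is logically equivalent but somewhat cleaner, since it avoids having to argue that every witness of the composed relaxation arises from a witness of $F_0G$. You are also right to flag, and correctly verify, the orientation constraints ($s\le q$, $s\le p$, $q\le t$, $p\le t$) required to invoke the admissibility clauses, and you correctly note that applying the upper Godunov relaxation to $F_0\underline G$ requires the continuity, monotonicity and semi-coercivity established in the preceding proposition. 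No gaps.
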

In order to prove this proposition,  the following lemma is needed. The proof can be found in \cite[Lemma 5.7]{FIM1}, where the result holds independently for each component $\alpha$.
\begin{Lemma}[Key composition result]\label{lem:n1e}
  \begin{enumerate}[label=(\roman*)]
  \item \label{first}
    For all $q,p \in \R^N$, there exists $\tilde  q \in \R^N$ such that for all $\a\in\{1,\dots,N\}$, $\overline G^\a (q^\a,\tilde q^\a)\cap \underline G^\a(\tilde q^\a,p^\a)\ne \emptyset$. Moreover, for such a vector $\tilde q$, we have 
    $$\overline G^\a (q^\a,\tilde q^\a)\cap \underline G^\a(\tilde q^\a,p^\a)=\{G^\a(q^\a,p^\a) \}\quad  \text{for all }\a\in\{1,\dots,N\}.$$
\item \label{second}
 For all $q,p \in \R^N$, there exists $\tilde  q \in \R^N$ such that for all $\a\in\{1,\dots,N\}$,  $\underline G^\a (q^\a,\tilde q^\a)\cap \overline G^\a(\tilde q^\a,p^\a)\ne \emptyset$. Moreover, for such a vector $\tilde q$, we have 
 $$\underline G^\a (q^\a,\tilde q^\a)\cap \overline G^\a(\tilde q^\a,p^\a)=\{G^\a(q^\a,p^\a) \}\quad  \text{for all }\a\in\{1,\dots,N\}.$$
\end{enumerate}
\end{Lemma}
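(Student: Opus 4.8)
The key observation is that, for any fixed $q,p\in\R^N$, each of the two items is a conjunction over $\a=1,\dots,N$ of assertions involving only the scalars $q^\a,\tilde q^\a,p^\a$ and the single real Hamiltonian $H^\a$, since $G^\a$, $\underline G^\a$ and $\overline G^\a$ are defined coordinate by coordinate. Hence the vector $\tilde q$ may be built one component at a time, and it suffices to prove the statement for $N=1$: given a continuous coercive $H:\R\to\R$ and scalars $q,p$ (superscripts dropped), one must exhibit $\tilde q\in\R$ making the relevant intersection nonempty and check that \emph{every} such $\tilde q$ makes it the singleton $\{G(q,p)\}$. This componentwise reduction is exactly why the paper may quote \cite[Lemma 5.7]{FIM1}; below I indicate how the one-dimensional statement is obtained directly. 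Note that coercivity plays no role here: the identity is purely about $\min$ and $\max$ of $H$ over compact intervals.

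Consider item \ref{first} (item \ref{second} being the mirror case, with the inequalities reversed and $\max(q,p)$ replaced by $\min(q,p)$). Reading off the definitions, $\overline G(q,\tilde q)\cap\underline G(\tilde q,p)$ can contain a finite value only if $q\le\tilde q$ and $p\le\tilde q$; under this constraint $\overline G(q,\tilde q)$ equals $\{\min_{[q,\tilde q]}H\}$, or the half-line $[H(q),+\infty]$ if $\tilde q=q$, while $\underline G(\tilde q,p)$ equals $\{\max_{[p,\tilde q]}H\}$, or $[-\infty,H(p)]$ if $\tilde q=p$. For existence I would take the explicit choice $\tilde q=\max(q,p)$: a short verification, using $\max_{[p,q]}H\ge H(q)$ when $q\ge p$ and $\min_{[q,p]}H\le H(p)$ when $q\le p$, shows that $G(q,p)$ belongs to both sets, so the intersection is nonempty.

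For the second assertion, suppose $\tilde q\ge\max(q,p)$ and $\ell$ lies in both sets. If $\tilde q=q$ (so $q\ge p$) or $\tilde q=p$ (so $p\ge q$), a direct inspection of the piecewise formulas pins $\ell$ to $G(q,p)$, because one of the two sets is then the singleton $\{G(q,p)\}$, or else (when $q=p$) the two sets are complementary half-lines meeting only at $G(q,q)$. In the remaining case $\tilde q>\max(q,p)$ both sets are singletons, so $\min_{[q,\tilde q]}H=\ell=\max_{[p,\tilde q]}H$; evaluating at $\tilde q$ gives $H(\tilde q)=\ell$, and comparing $H$ with the constant $\ell$ on $[\max(q,p),\tilde q]$ — using that $\min$ increases and $\max$ decreases as the interval shrinks — forces $H\equiv\ell$ there, whence $\max_{[p,q]}H=\ell$ if $q\ge p$ and $\min_{[q,p]}H=\ell$ if $q\le p$, that is $\ell=G(q,p)$ in all cases. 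The one step that needs genuine care is this chain of interval-$\min/\max$ comparisons; the rest is routine case bookkeeping with the piecewise definitions, and item \ref{second} follows by the symmetric argument.
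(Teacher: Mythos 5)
Your proof is correct and takes the same route the paper indicates: the paper gives no argument of its own beyond observing that the statement is componentwise and citing \cite[Lemma 5.7]{FIM1} for the scalar case. Your one-dimensional analysis — the explicit choice $\tilde q^\a=\max(q^\a,p^\a)$ (resp. $\min(q^\a,p^\a)$) for existence, and for uniqueness the nested-interval $\min/\max$ comparison forcing $H^\a\equiv\ell$ on $[\max(q^\a,p^\a),\tilde q^\a]$ when $\tilde q^\a>\max(q^\a,p^\a)$ — is a correct, self-contained substitute for that citation.
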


We are now turn to the proof of Proposition \ref{pro:n3}.
\begin{proof}[Proof of Proposition \ref{pro:n3}]
The proof is similar to the one of \cite[Proposition 5.6]{FIM1}, but for the reader's convenience we give the details. Let $F_1=F_0\overline G$. We use successively the definition of $F_0 G$, \ref{first} from Lemma~\ref{lem:n1e}, the definitions of
$F_0 \overline G$ and of $F_1 \underline G$ to write,
\begin{align*}
  \{ F_0 G (p) \} &= \{ F_0 (q) \text{ for some $q$ s.t. } F_0(q) \in G^\a(q^\a,p^\a)\text{ for all }\a\in\{1,\dots,N\}\} \\
& = \{ F_0 (q) \text{ for some $q$ and $\tilde q$ s.t. } F_0 (q) \in \overline G^\a (q^\a,\tilde q^\a) \cap \underline G^\a(\tilde q^\a,p^\a)\text{ for all }\a\in\{1,\dots,N\}\} \\
  \{ F_1 (\tilde q) \} &= \{F_0 \overline G (\tilde q) \} = \{ F_0 (q) \text{ for some $q$ s.t. } F_0 (q) \in \overline G^\a (q^\a,\tilde q^\a)\text{ for all }\a\in\{1,\dots,N\}\} \\
  \{ F_1 \underline G (p) \} &= \{ F_1 (\tilde q) \text{ for some $\tilde q$ s.t. } F_1 (\tilde q) \in \underline G^\a(\tilde q^\a,p^\a)\text{ for all }\a\in\{1,\dots,N\} \} \\
  & = \{ F_0 (q) \text{ for some $q$ and $\tilde q$ s.t. } F_0 (q) \in \overline G^\a (q^\a,\tilde q^\a) \cap \underline G^\a(\tilde q^\a,p^\a)\text{ for all }\a\in\{1,\dots,N\}\}.
\end{align*}
This implies that $F_0 G (p) = F_1 \underline G (p)= (F_0 \overline G) \underline G(p)$.

Using \ref{second} from Lemma~\ref{lem:n1e}, we can follow the same reasoning and get $F_0 G (p) = (F_0 \underline G) \overline G(p)$.
\end{proof}
\section{Relaxation operator}\label{sec:4}
This section is devoted to the proof of Theorem  \ref{th:relax}. Conversely to the case of a single branch (see \cite{FIM1}), we are not able to show directly that $\underline R \overline R F_0=\overline R \underline R F_0$. To prove this, we will use the link between the relaxation operators and the Godunov's semi-fluxes. This is done in Subsection \ref{subsec:4.1} in the case where $F_0$ is semi-coercive. In Subsection \ref{subsec:4.2} we will give some useful properties of the relaxation operators that will be used to prove Theorem \ref{th:relax} in the general case. Finally, in Subsection \ref{subsec:4.3}, we give further properties of the relaxation operators that will be used in the rest of the paper.

We recall that  the sub-relaxation operator with respect to $H$ is defined by
$$\underline{R} F_0(p) =\sup_{q\ge p} \min\left\{F_0(q),H_{min}(q)\right\}$$
and the super-relaxation operator by
$$\overline{R} F_0(p) =\inf_{q\le p} \max\left\{F_0(q),H_{max}(q)\right\}$$
where $q\le p$ means $q^\alpha\le p^\alpha$ for each $\alpha=1,\dots,N$ and 
$$H_{max}(p)=\max_{\alpha=1,\dots,N}H^\alpha(p^\alpha)\quad\quad  \mbox{and}\quad \quad
H_{min}(p)=\min_{\alpha=1,\dots,N}H^\alpha(p^\alpha).$$

\subsection{Link between Relaxation and Godunov fluxes}\label{subsec:4.1}
The goal of this subsection is to show that the relaxation operator $\frak R$ and the Godunov relaxation coincide when $F_0$ is semi-coercive. We begin by the following proposition.
\begin{Proposition}[Semi-relaxations and Godunov's semi-fluxes]
Let $H$ satisfy \eqref{eq::p00} and  $F_0$  satisfy \eqref{eq::m11} and the semi-coercivity property \eqref{eq::p63}. Then
$$\underline RF_0=F_0\underline G\quad\text{and}\quad \overline RF_0=F_0\overline G.$$
\end{Proposition}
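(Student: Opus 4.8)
The plan is to prove the two identities $\underline R F_0 = F_0 \underline G$ and $\overline R F_0 = F_0 \overline G$ separately, by fixing $p\in\R^N$ and comparing the two quantities through a chain of inequalities. By the obvious symmetry between the two statements (reverse the order on $\R^N$, swap $\min\leftrightarrow\max$, $H_{min}\leftrightarrow H_{max}$, and $\underline G\leftrightarrow\overline G$), it suffices to treat $\underline R F_0 = F_0 \underline G$ in detail. Throughout, I would write $\lambda:=(F_0\underline G)(p)=F_0(\bar q)$ where $\bar q\ge p$ is a point furnished by Proposition~\ref{pro:n2}\ref{lambdaq2}, so that $F_0(\bar q)\in\underline G^\a(\bar q^\a,p^\a)$ for every $\a$.

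First I would show $\underline R F_0(p)\ge \lambda$. Unwinding the membership $F_0(\bar q)\in\underline G^\a(\bar q^\a,p^\a)$ using the definition of $\underline G^\a$: if $\bar q^\a>p^\a$ then $F_0(\bar q)=\max_{[p^\a,\bar q^\a]}H^\a\ge H^\a(\bar q^\a)$, and if $\bar q^\a=p^\a$ then $F_0(\bar q)\le H^\a(p^\a)=H^\a(\bar q^\a)$; in the second case we in fact get $F_0(\bar q)\le H^\a(\bar q^\a)$. Hence for the index set $I^+=\{\a:\bar q^\a>p^\a\}$ one has $F_0(\bar q)\ge H^\a(\bar q^\a)$, and for its complement $F_0(\bar q)\le H^\a(\bar q^\a)$. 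The point is that $\lambda = F_0(\bar q) = \min\{F_0(\bar q),H_{min}(\bar q)\}$ is NOT automatic — rather, one argues $\min\{F_0(\bar q),H_{min}(\bar q)\}$ equals $\lambda$ when $\lambda\le H_{min}(\bar q)$; when that fails one must instead push $\bar q$ up in the coordinates of $I^+$. The cleaner route, which I would follow, is: since $\bar q\ge p$, the point $\bar q$ is admissible in $\sup_{q\ge p}$, so $\underline R F_0(p)\ge \min\{F_0(\bar q),H_{min}(\bar q)\}$; and because $F_0(\bar q)\le H^\a(\bar q^\a)$ on the complement of $I^+$, while on $I^+$ we can (using the intermediate value theorem and coercivity of $H^\a$, exactly as in the proof of Proposition~\ref{pro:n1}) replace $\bar q^\a$ by a slightly smaller value $q^\a\in[p^\a,\bar q^\a]$ realizing $H^\a(q^\a)=\lambda$ along the branch where $H^\a$ attains its max — keeping $F_0$ nondecreasing as coordinates decrease so $F_0(q)\ge\lambda$ — one gets a competitor $q\ge p$ with $H_{min}(q)\ge\lambda$ and $F_0(q)\ge\lambda$, hence $\underline R F_0(p)\ge\lambda$.

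For the reverse inequality $\underline R F_0(p)\le\lambda$: take any $q\ge p$ and I must show $\min\{F_0(q),H_{min}(q)\}\le\lambda$. Suppose not, so $F_0(q)>\lambda$ and $H^\a(q^\a)>\lambda$ for all $\a$. I would compare $q$ with $\bar q$ coordinatewise. On $\a\notin I^+$ (where $\bar q^\a=p^\a\le q^\a$ and $\lambda\le H^\a(\bar q^\a)$), and on $\a\in I^+$ (where $\lambda=\max_{[p^\a,\bar q^\a]}H^\a$, so $H^\a\le\lambda$ on $[p^\a,\bar q^\a]$, forcing $q^\a\notin[p^\a,\bar q^\a]$, i.e. $q^\a>\bar q^\a$ since $q^\a\ge p^\a$). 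Either way $q^\a\ge\bar q^\a$ for every $\a$ — to nail the indices outside $I^+$, note if $q^\a<\bar q^\a=p^\a$ that contradicts $q\ge p$, so $q^\a\ge p^\a=\bar q^\a$ there too. Thus $q\ge\bar q$, and monotonicity of $F_0$ gives $F_0(q)\le F_0(\bar q)=\lambda$, contradicting $F_0(q)>\lambda$. This proves $\underline R F_0(p)\le\lambda$, completing $\underline R F_0=F_0\underline G$; the identity $\overline R F_0=F_0\overline G$ follows by the symmetric argument.

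The main obstacle is the bookkeeping with the set-valued semi-fluxes at the ``diagonal'' $\bar q^\a=p^\a$, where $\underline G^\a(\bar q^\a,p^\a)=[-\infty,H^\a(p^\a)]$ is an interval rather than a single value: one must carefully distinguish whether a given coordinate of the optimal $\bar q$ sits strictly above $p^\a$ (a genuine Godunov-flux constraint $F_0(\bar q)=\max_{[p^\a,\bar q^\a]}H^\a\ge\lambda$) or exactly at $p^\a$ (only the one-sided constraint $F_0(\bar q)\le H^\a(p^\a)$), and match each case against the corresponding behaviour of a competitor $q$ in the $\sup_{q\ge p}$. Once the coordinatewise comparison $q\ge\bar q$ in the upper bound and the coordinatewise perturbation of $\bar q$ in the lower bound are set up correctly, the rest is a routine use of continuity, coercivity and the monotonicity properties of $F_0$, $H^\a$ and $G^\a$ already recorded in Section~\ref{sec:3}.
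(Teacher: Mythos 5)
Your proposal is correct and follows essentially the same route as the paper: for the lower bound you pull each coordinate in $I^+$ back to a maximizer of $H^\alpha$ on $[p^\alpha,\bar q^\alpha]$ (where the max equals $\lambda$) to build a competitor $q$ with $F_0(q)\ge\lambda$ and $H_{\min}(q)\ge\lambda$, and for the upper bound you show every admissible $q$ with $\min\{F_0(q),H_{\min}(q)\}>\lambda$ would have to dominate $\bar q$ coordinatewise, contradicting monotonicity of $F_0$ — which is just the contrapositive packaging of the paper's direct case split. The only cosmetic difference is that the paper argues the upper bound directly rather than by contradiction, and a maximizer of $H^\alpha$ on the compact interval suffices where you invoke the intermediate value theorem.
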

\begin{proof}
We only prove that $\underline RF_0=F_0\underline G$, the proof of $\overline RF_0=F_0\overline G$ being similar. The proof is decomposed into two steps.

\noindent{\bf Step 1: $\underline RF_0\ge F_0\underline G$.}
Let $p\in \R^N$. By Proposition \ref{pro:n2}, there exists $\tilde q$ such that 
$F_0\underline G(p)=F_0(\tilde q)\in\underline G^\a(\tilde q^\a,p^\a)$ for all $\a\in \{1,\dots,N\}$. If $\tilde q^\a>p^\a$, we have
$F_0(\tilde q)=G^\a(\tilde q^\a,p^\a)=\max_{[p^\a,\tilde q^\a]}H^\a$ and we define $\bar q^\a\in [p^\a,\tilde q^\a]$ such that
$$H^\a(\bar q^\a)=\max_{[p^\a,\tilde q^\a]}H^\a=F_0(\tilde q).$$
If $\tilde q^\a=p^\a$, we set $\bar q^\a=p^\a$, so that, using  $F_0(\tilde q)\in \underline G^\a(p^\a,p^\a)$, $F_0(\tilde q)\le H^\a(p^\a)=H^\a(\bar q^\a).$

Using that $\bar q\ge p$, we then have
\begin{equation}\label{eq:002}
\underline R F_0(p)=\sup_{q\ge p}\min (F_0(q),H_{\min}(q))\ge \min (F_0(\bar q), H_{\min}(\bar q)).
\end{equation}
Since $\bar q\le \tilde q$, we have $F_0(\bar q)\ge F_0(\tilde q)$. Moreover, by definition of $\bar q$, we have $H^\a(\bar q^\a)\ge F_0(\tilde q)$. Injecting these two estimates in \eqref{eq:002}, we get
$$\underline R F_0(p)\ge F_0(\tilde q)=F_0 \underline G(p).$$

\noindent{\bf Step 2: $\underline RF_0\le F_0\underline G$.}
With the same notation as in Step 1, we have, for all $q\ge \tilde q$
\begin{equation}\label{eq:003}
\min (F_0(q),H_{\min}(q))\le F_0(q)\le F_0(\tilde q).
\end{equation}
If $\tilde q=p$, we directly get the result, just taking the supremum on $q\ge \tilde q=p$ on the left hand side. We then assume that $\tilde q\ne p$. Hence there exists $\a$ such that $\tilde q^\a>p^\a$. 
Let $q\ge p$ be such that there exists $\a\in\{1,\dots,N\}$ such that $q^\a<\tilde q^\a$. Then
$$\min\left(F_0(q),H_{\min}(q)\right)\le H_{\min}(q)\le H^\a(q^\a)\le \max_{[p^\a,\tilde q^\a]}H^\a=G^\a(\tilde q^\a,p^\a)=F_0(\tilde q).$$
Combining this with \eqref{eq:003}, we finally get
$$\underline R F_0(p)=\sup_{q\ge p}\min (F_0(q),H_{\min}(q))\le F_0(\tilde q)=F_0 \underline G(p).$$
This ends the proof of the proposition.

\end{proof}
A direct consequence of this result, combining with Proposition \ref{pro:n3}, is the following result.
\begin{Proposition}[Definition of $\frak R F_0$ with $F_0$ semi-coercive]\label{pro:RGsemicoercif}
Let $H$ satisfy \eqref{eq::p00} and  $F_0$  satisfy \eqref{eq::m11} and the semi-coercivity property \eqref{eq::p63}. Then
$$\overline R\underline RF_0=\underline R\overline RF_0.$$

\end{Proposition}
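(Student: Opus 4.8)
The plan is to deduce Proposition~\ref{pro:RGsemicoercif} purely formally from the two preceding results, namely the identification of the semi-relaxations with the Godunov semi-fluxes ($\underline R F_0=F_0\underline G$ and $\overline R F_0=F_0\overline G$) and the commutation of the Godunov semi-fluxes from Proposition~\ref{pro:n3}, which states $(F_0\overline G)\underline G=F_0G=(F_0\underline G)\overline G$. First I would note that the hypotheses on $F_0$ (continuity, monotonicity, semi-coercivity) are exactly those under which the Proposition on properties of $F_0\underline G$ and $F_0\overline G$ applies; this guarantees that $\underline R F_0$ and $\overline R F_0$ again satisfy \eqref{eq::m11} and \eqref{eq::p63}, so that the operators $\underline R$ and $\overline R$ can legitimately be applied a second time and the identities $\underline R(\cdot)=(\cdot)\underline G$, $\overline R(\cdot)=(\cdot)\overline G$ can be re-used on $\overline R F_0$ and $\underline R F_0$ respectively.

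Then the chain of equalities is immediate: using $\overline R F_0=F_0\overline G$ and then $\underline R(\overline R F_0)=(\overline R F_0)\underline G=(F_0\overline G)\underline G$, Proposition~\ref{pro:n3} gives $(F_0\overline G)\underline G=F_0G$. Symmetrically, $\underline R F_0=F_0\underline G$ and $\overline R(\underline R F_0)=(\underline R F_0)\overline G=(F_0\underline G)\overline G=F_0G$, again by Proposition~\ref{pro:n3}. Hence both $\underline R\overline R F_0$ and $\overline R\underline R F_0$ equal $F_0G$, and in particular they are equal to each other. I would present this as a short two-line computation, perhaps displayed as
\[
\underline R\overline R F_0=(F_0\overline G)\underline G=F_0G=(F_0\underline G)\overline G=\overline R\underline R F_0,
\]
with a sentence before it recalling why each equality holds.

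I do not anticipate a genuine obstacle here; the proposition is explicitly flagged in the text as "a direct consequence" of the previous two results. The only point requiring a word of care is the well-definedness issue mentioned above: one must invoke the stability of the class (continuous, non-increasing, semi-coercive) under $\underline G$ and $\overline G$ before composing, since otherwise the second application of the semi-relaxation identity is not justified. Beyond that, the argument is a formal substitution, and the real mathematical content has already been carried by the identification of $\underline R,\overline R$ with $\underline G,\overline G$ and by the key composition Lemma~\ref{lem:n1e} underlying Proposition~\ref{pro:n3}.
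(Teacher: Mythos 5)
Your proof is correct and is precisely the argument the paper intends: the paper itself presents Proposition \ref{pro:RGsemicoercif} as "a direct consequence" of the identification $\underline R F_0=F_0\underline G$, $\overline R F_0=F_0\overline G$ together with Proposition \ref{pro:n3}, and your chain $\underline R\overline R F_0=(F_0\overline G)\underline G=F_0G=(F_0\underline G)\overline G=\overline R\underline R F_0$ is exactly that deduction. Your remark that the stability of the class (continuity, monotonicity, semi-coercivity) under the Godunov action must be invoked before the second application is the right point of care and is covered by the paper's proposition on the properties of $F_0\underline G$ and $F_0\overline G$.
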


The goal of the next section is to generalize this result to the case where $F_0$ doesn't satisfy the semi-coercivity assumption. To do that, we will need some additional properties on the semi-relaxations.
\subsection{First properties of the semi-relaxations and proof of Theorem \ref{th:relax}}\label{subsec:4.2}
We begin by the following lemma whose proof is an immediate consequence of the definition.
\begin{Lemma}[Monotonicity of the semi-relaxations]\label{lem:compR}
Let $H, H'$ satisfy \eqref{eq::p00} and  $F_0, F_0'$  satisfy \eqref{eq::m11}. If $F_0'\le F_0$, then $\underline R F_0'\le \underline R F_0$ and $\overline R F_0'\le \overline R F_0$. Moreover, if we denote by $\underline R_H F_0$ (resp. $\overline R_H F_0$) the sub- (resp. super-) relaxation of $F_0$ with respect to $H$, then if $H\le H'$, then
$$\underline R_H F_0\le \underline R_{H'} F_0\quad{\rm and}\quad  \overline R_{H} F_0\le  \overline R_{H'} F_0.$$
\end{Lemma}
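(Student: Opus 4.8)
The statement to prove is Lemma~\ref{lem:compR}, asserting monotonicity of the semi-relaxation operators $\underline R$ and $\overline R$ both in the junction function $F_0$ and in the Hamiltonian vector $H$.

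\medskip

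\textbf{Plan.} The proof is purely a matter of unwinding the definitions, since both $\underline R$ and $\overline R$ are built out of monotone set operations (suprema, infima, $\min$, $\max$). First I would recall that
$$\underline R F_0(p) = \sup_{q\ge p}\min\{F_0(q), H_{min}(q)\}, \qquad \overline R F_0(p) = \inf_{q\le p}\max\{F_0(q), H_{max}(q)\},$$
and I would treat each of the two monotonicity claims in turn.

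\medskip

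\textbf{Step 1: monotonicity in $F_0$.} Suppose $F_0'\le F_0$ pointwise. For each fixed $q$ one has $\min\{F_0'(q), H_{min}(q)\}\le \min\{F_0(q), H_{min}(q)\}$, because $\min$ is non-decreasing in each argument. Taking the supremum over $q\ge p$ on both sides preserves the inequality, giving $\underline R F_0'(p)\le \underline R F_0(p)$ for every $p$. The argument for $\overline R$ is identical with $\max$ in place of $\min$ and $\inf_{q\le p}$ in place of $\sup_{q\ge p}$: from $\max\{F_0'(q), H_{max}(q)\}\le \max\{F_0(q), H_{max}(q)\}$ for each $q$, taking infima over $q\le p$ yields $\overline R F_0'(p)\le \overline R F_0(p)$.

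\medskip

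\textbf{Step 2: monotonicity in $H$.} Suppose now $H\le H'$, meaning $H^\alpha(r)\le (H')^\alpha(r)$ for all $\alpha$ and all $r\in\R$. Then directly from their definitions, $H_{min}(q) = \min_\alpha H^\alpha(q^\alpha) \le \min_\alpha (H')^\alpha(q^\alpha) = H'_{min}(q)$ and likewise $H_{max}(q)\le H'_{max}(q)$, for every $q\in\R^N$. For $\underline R$: for each fixed $q$, $\min\{F_0(q), H_{min}(q)\}\le \min\{F_0(q), H'_{min}(q)\}$, and taking $\sup_{q\ge p}$ gives $\underline R_H F_0(p)\le \underline R_{H'} F_0(p)$. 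For $\overline R$: for each fixed $q$, $\max\{F_0(q), H_{max}(q)\}\le \max\{F_0(q), H'_{max}(q)\}$, and taking $\inf_{q\le p}$ gives $\overline R_H F_0(p)\le \overline R_{H'} F_0(p)$. This exhausts all four inequalities.

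\medskip

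\textbf{Main obstacle.} Frankly there is no real obstacle here: the lemma is a formal consequence of the monotonicity of $\sup$, $\inf$, $\min$ and $\max$, which is exactly why the excerpt says the proof ``is an immediate consequence of the definition.'' The only point requiring a sentence of care is observing that $q\mapsto H_{min}(q)$ and $q\mapsto H_{max}(q)$ inherit the pointwise ordering $H\le H'$, which is immediate since $\min$ and $\max$ over the finite index set $\{1,\dots,N\}$ are themselves monotone. One should also note in passing that $\underline R F_0$ and $\overline R F_0$ are well-defined (finite or $+\infty$) under \eqref{eq::p00}--\eqref{eq::m11}, but this is not needed for the inequalities, which hold in $[-\infty,+\infty]$ regardless.
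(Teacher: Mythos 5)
Your proof is correct and is exactly the argument the paper has in mind: the paper omits the proof precisely because it is, as you carry out, an immediate consequence of the monotonicity of $\min$, $\max$, $\sup_{q\ge p}$ and $\inf_{q\le p}$ applied to the defining formulas. Nothing is missing.
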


\begin{Lemma}[$\underline R$ and $\overline R$ as projectors]\label{lem:proj}
Let $H$ satisfy \eqref{eq::p00} and  $F_0$  satisfy \eqref{eq::m11}. Then
$$\underline R (\underline R F_0)=\underline R F_0\quad{\rm and}\quad \overline R (\overline R F_0)=\overline R F_0.$$
\end{Lemma}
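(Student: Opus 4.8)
The plan is to show that $\underline R$ is idempotent; the argument for $\overline R$ is dual and needs no separate treatment. Write $F_1 = \underline R F_0$. Since applying $\underline R$ replaces $F_0$ by $\sup_{q\ge p}$ of a certain quantity, we always have $\underline R F_0 \ge \min\{F_0, H_{min}\}$ (take $q=p$); but more to the point, $\underline R F_0 \le F_0$ is generally \emph{false}, so the two inclusions $\underline R F_1 \le F_1$ and $\underline R F_1 \ge F_1$ must each be argued directly. The inequality $\underline R F_1 \ge F_1$ is immediate: $\underline R F_1(p) = \sup_{q\ge p}\min\{F_1(q), H_{min}(q)\} \ge \min\{F_1(p), H_{min}(p)\}$, and one checks from the definition of $F_1 = \underline R F_0$ that $F_1(p) \le H_{min}(q)$ is not what we want — rather, we want $F_1(p)\le \limsup$-type bounds. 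Let me restructure: the clean way is to first record the elementary fact that $\underline R F_0 \le H_{min}^{\uparrow}$, the smallest nondecreasing... no. Actually the key monotonicity-type fact is that $\underline R F_0(p) \le \sup_{q\ge p} H_{min}(q)$ and $\underline R F_0(p)\le \sup_{q\ge p}F_0(q)$, the latter being just $F_0$ if $F_0$ were nondecreasing, which it is not.

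Here is the approach I would actually commit to. First, I would establish the two one-line bounds valid for any admissible $G$: $(\mathrm i)$ $\underline R G \ge \min\{G, H_{min}\}$ pointwise (take $q=p$ in the sup), and $(\mathrm{ii})$ $\underline R G \le H_{sup}$ where $H_{sup}(p) := \sup_{q\ge p} H_{min}(q)$ is the \emph{upper} nondecreasing hull of $H_{min}$; this holds because each term $\min\{G(q),H_{min}(q)\}\le H_{min}(q)\le H_{sup}(p)$ for $q\ge p$. Then, for $\underline R F_1 \ge F_1$: fix $p$ and $q\ge p$; I must show $\min\{F_1(q), H_{min}(q)\}$ can be made $\ge F_1(p) - \e$ by suitable choice, i.e. that $F_1(q) \ge$ the relevant quantity. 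But by definition $F_1(p) = \sup_{r\ge p}\min\{F_0(r), H_{min}(r)\}$, so pick $r\ge p$ with $\min\{F_0(r),H_{min}(r)\} \ge F_1(p) - \e$; then $F_1(r) \ge \min\{F_0(r), H_{min}(r)\}$ by $(\mathrm i)$, and also $H_{min}(r) \ge F_1(p)-\e$; hence $\min\{F_1(r), H_{min}(r)\} \ge F_1(p) - \e$. Since $r \ge p$, $\underline R F_1(p) = \sup_{s\ge p}\min\{F_1(s),H_{min}(s)\} \ge F_1(p) - \e$, and let $\e\to 0$.

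For the reverse inequality $\underline R F_1 \le F_1$: fix $p$; for every $q\ge p$ I must bound $\min\{F_1(q), H_{min}(q)\}$ by $F_1(p)$. Now $F_1(q) = \sup_{r\ge q}\min\{F_0(r), H_{min}(r)\}$, and since $q\ge p$ we have $\{r : r\ge q\}\subseteq \{r : r \ge p\}$, so $F_1(q) \le F_1(p)$ — that is, $F_1 = \underline R F_0$ is automatically \emph{nonincreasing} (monotonicity of the sup over a shrinking set; this is exactly the content behind Lemma~\ref{lem:compR} applied to the defining formula, or can be quoted as the already-known fact that $\underline R F_0$ satisfies \eqref{eq::m11}). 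Therefore $\min\{F_1(q), H_{min}(q)\} \le F_1(q) \le F_1(p)$ for all $q\ge p$, and taking the sup gives $\underline R F_1(p) \le F_1(p)$. Combining the two inequalities yields $\underline R(\underline R F_0) = \underline R F_0$, and the dual computation with $\inf_{q\le p}$ and $H_{max}$ gives $\overline R(\overline R F_0) = \overline R F_0$.

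The only subtle point — and the one I would present most carefully — is step $(\mathrm i)$ used inside the proof of $\underline R F_1 \ge F_1$, namely that $\underline R F_0 \ge \min\{F_0, H_{min}\}$ pointwise; this is genuinely trivial (it is the $q=p$ term of the defining supremum) but it is doing the real work, since it lets us "reinject" the value $F_0(r)$ as a lower bound for $F_1(r)$. Everything else is bookkeeping with suprema over nested index sets. I do not expect any serious obstacle; the proof is short and the paper's phrasing ("immediate consequence of the definition") for the neighbouring Lemma~\ref{lem:compR} suggests the authors intend a similarly brief argument here.
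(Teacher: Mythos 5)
Your committed argument is correct and follows essentially the same route as the paper: $\underline R F_1 \le F_1$ from the monotonicity of $F_1=\underline R F_0$, and $\underline R F_1 \ge F_1$ by re-injecting an $\e$-maximizer $r$ of the supremum defining $F_1(p)$, using that both $F_1(r)$ and $H_{\min}(r)$ dominate $F_1(p)-\e$. One remark: your aside that ``$\underline R F_0 \le F_0$ is generally \emph{false}'' is itself incorrect under the standing hypothesis \eqref{eq::m11} — since $F_0$ is nonincreasing, $\underline R F_0(p)\le \sup_{q\ge p}F_0(q)=F_0(p)$, which is precisely the observation the paper records as its first step — but this does not affect your final proof, which correctly uses the analogous (true) fact for the nonincreasing function $F_1$.
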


\begin{proof}
We do only the proof for $\underline R$. We set $F_1=\underline R F_0$. On the one hand, by definition of $\underline R$ and by monotonicity of $F_0$, we have
\begin{equation}\label{eq::N1}
\underline R F_0(p)=\sup_{q\ge p}\min (F_0(q),H_{\min}(q))\le \sup_{q\ge p}F_0(q)=F_0(p).
\end{equation}
Similarly, we get $\underline R F_1(p)\le F_1(p)$.
On the other hand, we have $F_1(p)=\sup_{q\ge p}\min (F_0(q),H_{\min}(q))$ and we assume for simplicity that there exists $q^*\ge p$ such that $F_1(p)=\min (F_0(q^*),H_{\min}(q^*))$. If such point does not exist, we simply consider $\e$-maximizer, which leads classically to the same conclusion in the limit $\e\to 0$.

By definition and monotonicity of $F_1$, we have 
$$F_1(p)\ge F_1(q^*)\ge \min (F_0(q^*),H_{\min}(q^*))=F_1(p).$$
Using \eqref{eq::N1}, we get $F_1(q^*)=\underline R F_0(q^*)\le F_0(q^*)$.  Since $\underline R F_0(q^*)\ge H_{\min}(q^*)$, we get that $$ \min (F_0(q^*),H_{\min}(q^*))=H_{\min}(q^*).$$
Therefore
$H_{\min}(q^*)= F_1(p)=F_1(q^*)$. We then deduce that
$$\underline R F_1(p)\ge \min (F_1(q^*),H_{\min}(q^*))=F_1(p).$$
which implies that $\underline R F_1=F_1$ and ends the proof.
\end{proof}

The following lemma will enable us to replace $F_0$ by a semi-coercive  function.
\begin{Lemma}[Relaxation of $\max(F_0,H_-)$]\label{lem:n1}
Let $H$ satisfy \eqref{eq::p00} and  $F_0$  satisfy \eqref{eq::m11}. Then
$$\overline R (\max(F_0,H_-))=\overline R F_0$$
with $H_-$ defined in \eqref{eq::rep1}.

\end{Lemma}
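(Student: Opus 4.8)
The plan is to prove the two inequalities $\overline R(\max(F_0,H_-))\le \overline R F_0$ and $\overline R(\max(F_0,H_-))\ge \overline R F_0$ separately, the second being immediate from the monotonicity of $\overline R$ (Lemma~\ref{lem:compR}) since $\max(F_0,H_-)\ge F_0$. So the content is the first inequality: for each fixed $p\in\R^N$ we must produce, for any $q\le p$, a bound $\max\{\max(F_0,H_-)(q'),H_{max}(q')\}\le \overline R F_0(p)$ for a suitable $q'\le p$; equivalently, it suffices to show $\overline R(\max(F_0,H_-))(p)\le \overline R F_0(p)$.

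First I would fix $p$ and take $q\le p$. The quantity to control is $\max\{F_0(q),H_-(q),H_{max}(q)\}$. The key observation is that $H_-(q)=\max_\alpha H^\alpha_-(q^\alpha)$ with $H^\alpha_-(q^\alpha)=\inf_{r^\alpha\le q^\alpha}H^\alpha(r^\alpha)$, so for each $\alpha$ there is (by coercivity of $H^\alpha$, which guarantees the infimum is attained or approached along a bounded sequence) a point $r^\alpha\le q^\alpha$ with $H^\alpha(r^\alpha)$ arbitrarily close to $H^\alpha_-(q^\alpha)$. Set $\bar q$ with $\bar q^\alpha=r^\alpha$; then $\bar q\le q\le p$ and $H_{max}(\bar q)=\max_\alpha H^\alpha(r^\alpha)\approx \max_\alpha H^\alpha_-(q^\alpha)=H_-(q)$. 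Since $F_0$ is nonincreasing, $F_0(\bar q)\ge F_0(q)$. Therefore
\[
\max\{F_0(\bar q),H_{max}(\bar q)\}\;\ge\; \max\{F_0(q),H_-(q)\}\;=\;\max(F_0,H_-)(q),
\]
up to the $\varepsilon$ from approximating the infima. This is the main structural step: replacing $q$ by the "pushed-down" point $\bar q$ converts the extra term $H_-$ into a genuine value of $H_{max}$ without decreasing $F_0$.

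Taking the infimum over $q\le p$ on the right and noting that the $\bar q$'s produced all lie in $\{q'\le p\}$, I get
\[
\overline R(\max(F_0,H_-))(p)=\inf_{q\le p}\max\{\max(F_0,H_-)(q),H_{max}(q)\}\le \inf_{q'\le p}\max\{F_0(q'),H_{max}(q')\}=\overline R F_0(p),
\]
after letting $\varepsilon\to 0$. Combining with the reverse inequality finishes the proof. The one technical point deserving care — the main (mild) obstacle — is the attainment/approximation of $H^\alpha_-(q^\alpha)=\inf_{r^\alpha\le q^\alpha}H^\alpha(r^\alpha)$: one should note that coercivity of $H^\alpha$ forces this infimum to be finite and, along a minimizing sequence, the arguments $r^\alpha$ to stay bounded, so a subsequence converges and by continuity of $H^\alpha$ the infimum is actually attained; this lets one take $\varepsilon=0$ directly and keeps $\bar q\le p$. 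Everything else is bookkeeping with the monotonicity of $F_0$ and with the interchange of $\max$ and $\inf$.
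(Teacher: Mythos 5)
Your easy direction ($\overline R(\max(F_0,H_-))\ge \overline R F_0$ by monotonicity) is fine, but the ``main structural step'' for the other direction does not work as stated. To prove $\inf_{q\le p}\max\{\max(F_0,H_-)(q),H_{\max}(q)\}\le \inf_{q'\le p}\max\{F_0(q'),H_{\max}(q')\}$ you must exhibit, for each competitor $q'$ of the right-hand infimum, a competitor $q$ of the left-hand infimum with $\max\{\max(F_0,H_-)(q),H_{\max}(q)\}\le \max\{F_0(q'),H_{\max}(q')\}$. What your pushed-down point $\bar q$ delivers is the reverse inequality $\max\{F_0(\bar q),H_{\max}(\bar q)\}\ge \max\{F_0(q),H_-(q)\}$: a \emph{lower} bound on the right-hand integrand at some point, obtained precisely because $F_0(\bar q)\ge F_0(q)$ goes the wrong way for an upper bound. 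Chaining ``for every $q$ there is $\bar q$ with $B(\bar q)\ge C(q)$'' through the infima gives no comparison between $\inf_q\max\{C(q),H_{\max}(q)\}$ and $\inf_{q'}B(q')$, so the displayed conclusion is not justified by the preceding construction.

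The missing (and much simpler) observation, which is the paper's entire proof, is that $H_-\le H_{\max}$ pointwise: for each $\alpha$, $H^\alpha_-(q^\alpha)=\inf_{r\le q^\alpha}H^\alpha(r)\le H^\alpha(q^\alpha)$, hence $H_-(q)=\max_\alpha H^\alpha_-(q^\alpha)\le \max_\alpha H^\alpha(q^\alpha)=H_{\max}(q)$. Therefore the two integrands coincide identically,
$$\max\bigl\{\max(F_0,H_-)(q),H_{\max}(q)\bigr\}=\max\bigl\{F_0(q),H_-(q),H_{\max}(q)\bigr\}=\max\bigl\{F_0(q),H_{\max}(q)\bigr\},$$
and the equality of the infima is immediate, with no need for attainment of the infimum defining $H^\alpha_-$ or any pushed-down points. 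I would replace your Step with this one-line pointwise identity; as written, the argument has a genuine logical gap even though the statement it aims at is true.
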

\begin{proof}
By definition, we have
$$\overline R (\max(F_0,H_-))=\inf_{q\le p}\max(\max(F_0(q),H_-(q)), H_{\max}(q))=\inf_{q\le p}\max(F_0(q),H_-(q), H_{\max}(q))=
\overline R F_0,$$
where we use that $H_-\le H_{\max}$ for the last equality. This ends the proof.
\end{proof}

We are now ready to give the proof of Theorem \ref{th:relax}

\begin{proof}[Proof of Theorem \ref{th:relax}]
We first claim that 
\begin{equation}\label{eq:005}
\overline R \underline RF_0=\overline  R\underline R \max(F_0,H_-).
\end{equation}
Indeed, by definition of $\underline R F_0$, we have
\begin{align*}
\underline R \max(F_0,H_-) (p)
&=\sup_{q\ge p}\min \left\{\max(F_0(q),H_-(q)),H_{\min}(q)\right\}\\
&\le \sup_{q\ge p}\max \left\{\min(F_0(q),H_{\min}(q)),\min (H_-(q),H_{\min}(q))\right\}\\
&\le \max \left\{\sup_{q\ge p}\min(F_0(q),H_{\min}(q)),\sup_{q\ge p}\min (H_-(q),H_{\min}(q))\right\}\\
&\le \max(\underline R F_0(p),\underline R H_-(p)).
\end{align*}
Since 
$$\underline R H_-(p)=\sup_{q\ge p}\min (H_-(q),H_{\min}(q))\le \sup_{q\ge p} H_-(q)=H_-(p),$$
we finally get
$$\underline R \max(F_0,H_-) (p)\le \max(\underline R F_0(p),H_-(p)).$$
Taking the super-relaxation operation, we get (using Lemma \ref{lem:compR})
\begin{equation}\label{eq:007}
\overline R\underline R F_0\le \overline R \underline R\max(F_0,H_-)\le \overline R \max(\underline R F_0(p),H_-(p))=\overline R  \underline R F_0,
\end{equation}
where we use Lemma \ref{lem:n1} for the last equality. This implies \eqref{eq:005}.
Hence, we have
$$\overline R \underline RF_0=\overline  R\underline R \max(F_0,H_-)=\underline  R\overline R \max(F_0,H_-)=\underline R \overline RF_0,$$
where for the second equality we used Proposition \ref{pro:RGsemicoercif} and for the last one, we used Lemma \ref{lem:n1}. 

It just remains to show that $\frak R F_0\ge H_-$. Indeed, by \eqref{eq:007}, we have
$$\frak R F_0=\overline R \underline R F_0=\overline R (\max(\underline R F_0 ,H_-))\ge \overline R H_-\ge H_-,
$$
since
$$\overline R H_-(p)=\inf_{q\le p}\max(H_-(q) ,H_{\max}(q))\ge \inf_{q\le p}H_-(q)=H_-(p).$$
This ends the proof of the theorem.

\end{proof}

\begin{Remark}\label{rem:F-semicoercive}
Note that the fact that $\frak R F_0 \ge H_-$ implies in particular that $\frak R F_0$ is semi-coercive.
\end{Remark}
\subsection{Further properties of the relaxation operator and characteristic points}\label{subsec:4.3}

In this subsection, we give some properties of the relaxation that will be useful in the rest of the paper. We begin by a characterization of the sub-relaxation.

\begin{Proposition}[Characterization of sub-relaxation]\label{pro:6.6}
Let $H$ satisfy \eqref{eq::p00} and  $F_0$  satisfy \eqref{eq::m11}. For $p\in\R^N$, we define
$\bar p\in \R^N$ by
$$\bar p^\a=\left\{\begin{array}{lll}
p^\a&{\rm if}&H^\a(p^\a)\ge F_0(p)\\
\sup\{q^\a\ge p^\a, H^\a(\tilde q^\a)<F_0(p)\quad {\textrm{for all}}\quad \tilde q^\a\in[p^\a,q^\a)\}&{\rm if}&H^\a(p^\a)<F_0(p).
\end{array}
\right.
$$
Then $F_0$ is sub-relaxed, i.e. $F_0=\underline R F_0$, if, and only if, for all $p\in \R^N$, we have
\begin{equation}\label{eq:const}
F_0=const=F_0(p) \quad {\rm in}\quad  [p,\bar p],
\end{equation}
where $[p,\bar p]=\prod_{\a=1}^N [p^\a,\bar p^\a]$.

\end{Proposition}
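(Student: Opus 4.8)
The plan is to prove the two implications separately, using the explicit description of $\bar p$ together with the definition $\underline R F_0(p)=\sup_{q\ge p}\min\{F_0(q),H_{\min}(q)\}$.

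First, the direction ``$F_0=\underline R F_0$ $\Rightarrow$ \eqref{eq:const}''. Fix $p$ and set $\lambda=F_0(p)$. I would first observe the elementary fact that $H^\alpha(\bar p^\alpha)\ge\lambda$ for every $\alpha$: on indices where $\bar p^\alpha=p^\alpha$ this is the defining case $H^\alpha(p^\alpha)\ge F_0(p)$; on indices where $\bar p^\alpha>p^\alpha$, continuity of $H^\alpha$ and the definition of the supremum give $H^\alpha(\tilde q^\alpha)<\lambda$ for $\tilde q^\alpha\in[p^\alpha,\bar p^\alpha)$, hence $H^\alpha(\bar p^\alpha)\le\lambda$ by continuity, but if it were strictly below $\lambda$ the supremum could be pushed further (again by continuity), so in fact $H^\alpha(\bar p^\alpha)=\lambda$. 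Consequently $H_{\min}(q)\ge\min_\alpha H^\alpha(\bar p^\alpha)\ge\lambda$ for all... wait — one has to be careful: $H^\alpha$ need not be monotone, so $H^\alpha(q^\alpha)\ge\lambda$ for $q^\alpha\in[p^\alpha,\bar p^\alpha]$ is exactly what the definition of $\bar p^\alpha$ guarantees on the second branch (values $<\lambda$ on $[p^\alpha,\bar p^\alpha)$ is ruled out... no, it asserts $H^\alpha<\lambda$ on $[p^\alpha,\bar p^\alpha)$). So I must instead use the sub-relaxation hypothesis directly: for $q\in[p,\bar p]$ we have, by the ``$\ge$'' part of being sub-relaxed (i.e. $F_0(q)=\underline R F_0(q)\ge\min\{F_0(q'),H_{\min}(q')\}$ for a suitable $q'\ge q$, combined with monotonicity $F_0(q)\le F_0(p)=\lambda$), and the point is to show $F_0(q)\ge\lambda$. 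Take $q\in[p,\bar p]$. For each $\alpha$ with $q^\alpha>p^\alpha$ we have $q^\alpha\le\bar p^\alpha$, so either $H^\alpha(q^\alpha)\ge\lambda$ (if $\bar p^\alpha=p^\alpha$ this case is vacuous) or, on the second branch, $H^\alpha(\tilde q^\alpha)<\lambda$ for $\tilde q^\alpha\in[p^\alpha,q^\alpha]$ — hmm, this says $H^\alpha$ is \emph{small}, not large. I realize the correct reading: $\bar p^\alpha$ is the last point up to which $H^\alpha$ stays \emph{below} $\lambda$, so on $[p^\alpha,\bar p^\alpha)$ we have $H^\alpha<\lambda=F_0(p)$, and since $F_0$ is non-increasing, on $[p,\bar p)$ we have $F_0(q)\le\lambda$ but also $\min\{F_0(q),H_{\min}(q)\}\le H^\alpha(q^\alpha)<\lambda$ for the coordinates that moved; the sub-relaxed identity $F_0(q)=\sup_{q'\ge q}\min\{F_0(q'),H_{\min}(q')\}$ then forces $F_0(q)$ to be realized by some $q'\ge q$ with $H_{\min}(q')\ge F_0(q)$, and I push the argument: if $F_0(q)<\lambda$, then starting from $p$, $\sup_{q'\ge p}\min\{F_0(q'),H_{\min}(q')\}$ would be bounded by $\lambda$ anyway, so equality $F_0(p)=\lambda$ combined with $F_0(q)<\lambda$ and $H^\alpha(q^\alpha)<\lambda$ on the moved coordinates yields a contradiction with the supremum being attained ``beyond'' $\bar p$. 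So $F_0\equiv\lambda$ on $[p,\bar p]$.

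Second, the converse ``\eqref{eq:const} for all $p$ $\Rightarrow$ $F_0=\underline R F_0$''. Fix $p$; we always have $\underline R F_0(p)\le F_0(p)$ by monotonicity, so I only need $\underline R F_0(p)\ge F_0(p)$, i.e. to exhibit $q\ge p$ with $\min\{F_0(q),H_{\min}(q)\}\ge F_0(p)=:\lambda$. The natural candidate is $q=\bar p$. By the hypothesis, $F_0(\bar p)=\lambda$. It remains to check $H_{\min}(\bar p)\ge\lambda$, i.e. $H^\alpha(\bar p^\alpha)\ge\lambda$ for every $\alpha$; on the first branch ($\bar p^\alpha=p^\alpha$) this is immediate, and on the second branch this is the boundary-value computation I sketched above: $H^\alpha<\lambda$ on $[p^\alpha,\bar p^\alpha)$ forces $H^\alpha(\bar p^\alpha)\le\lambda$ by continuity, and if it were $<\lambda$, continuity would let us extend the interval, contradicting the supremum defining $\bar p^\alpha$ — unless $\bar p^\alpha=+\infty$, a degenerate case which must be excluded separately, presumably using coercivity of $H^\alpha$ (which prevents $H^\alpha$ from staying below $\lambda$ all the way to $+\infty$). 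Hence $\min\{F_0(\bar p),H_{\min}(\bar p)\}\ge\lambda$, giving $\underline R F_0(p)\ge\lambda=F_0(p)$.

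The main obstacle is the forward direction, specifically turning the pointwise condition ``$H^\alpha<F_0(p)$ on $[p^\alpha,\bar p^\alpha)$'' into the constancy of $F_0$ on the full box $[p,\bar p]$: one must combine the sub-relaxed identity at a generic point $q\in[p,\bar p]$ with the geometry of $\bar p$, and handle carefully the coordinates that have $\bar p^\alpha=p^\alpha$ versus those strictly above, as well as the $\e$-maximizer subtlety already flagged in Lemma~\ref{lem:proj} (the supremum defining $\underline R$ may not be attained). I would organize the forward direction as: (1) reduce to showing $F_0(q)\ge F_0(p)$ on $[p,\bar p]$; (2) argue by contradiction, taking $q\in[p,\bar p]$ with $F_0(q)<F_0(p)$; (3) use that $F_0=\underline R F_0$ at $p$ to find (up to $\e$) a point $q'\ge p$ with $\min\{F_0(q'),H_{\min}(q')\}$ close to $F_0(p)$, show $q'$ cannot have coordinates strictly inside $[p^\alpha,\bar p^\alpha)$ on the ``moved'' indices because $H^\alpha$ is too small there, and derive the contradiction; (4) conclude, and note the coercivity of the $H^\alpha$ is what makes all the relevant suprema finite and attained in the limit.
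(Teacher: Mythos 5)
Your overall route coincides with the paper's: both directions hinge on the single candidate point $\bar p$, the converse is obtained by plugging $q=\bar p$ into the supremum defining $\underline R F_0(p)$, and the forward direction rests on showing that this supremum may effectively be restricted to $q\ge \bar p$ and then invoking monotonicity of $F_0$. Your converse direction is complete and matches the paper's, including the two points the paper leaves implicit (coercivity of $H^\alpha$ rules out $\bar p^\alpha=+\infty$ on the second branch, and continuity forces $H^\alpha(\bar p^\alpha)\ge F_0(p)$ there).

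The forward direction, however, is left open at exactly the step that needs an argument. Your step (3) asserts that a near-maximizer $q'$ of $\min\{F_0,H_{\min}\}$ over $\{q\ge p\}$ cannot have a coordinate in $[p^\alpha,\bar p^\alpha)$ ``because $H^\alpha$ is too small there.'' For an exact maximizer this is fine (its value would be $\le H^\alpha(q'^\alpha)<\lambda:=F_0(p)$ while the supremum equals $\lambda$), but the supremum need not be attained, and for an $\varepsilon$-maximizer the claim fails: the definition of $\bar p^\alpha$ only gives $H^\alpha<\lambda$ on $[p^\alpha,\bar p^\alpha)$, not $H^\alpha\le\lambda-\delta$, so $q'^\alpha$ may sit wherever $H^\alpha$ is within $\varepsilon$ of $\lambda$. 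You flag this subtlety but do not resolve it, and resolving it is the one substantive point of this half of the proof. A correct completion: on second-branch indices $\bar p^\alpha<+\infty$ by coercivity, so $[p^\alpha,\bar p^\alpha]$ is compact; if $q_n\ge p$ satisfies $\min\{F_0(q_n),H_{\min}(q_n)\}\to\lambda$ and $q_n^\alpha\in[p^\alpha,\bar p^\alpha)$ along a subsequence, then $H^\alpha(q_n^\alpha)\to\lambda$ while $H^\alpha<\lambda$ on $[p^\alpha,\bar p^\alpha)$, which forces $q_n^\alpha\to\bar p^\alpha$. Hence every near-maximizing sequence satisfies $\liminf_n q_n^\alpha\ge\bar p^\alpha$ for all $\alpha$, so for every $\delta>0$ one eventually has $F_0(q_n)\le F_0(\bar p^1-\delta,\dots,\bar p^N-\delta)$, and continuity of $F_0$ yields $\lambda\le\limsup_n F_0(q_n)\le F_0(\bar p)\le F_0(p)=\lambda$, which gives \eqref{eq:const}. (The paper's own justification of its claim that the supremum restricts to $q\ge\bar p$ is equally terse, but as written your argument does not close this step.)
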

\begin{proof}
We begin to show that $\underline R F_0=F_0$ implies \eqref{eq:const}. Let $p\in\R^N$ be such that
$$\lambda=F_0(p)>H_{\min}(p)$$
(otherwise $\bar p=p$ and the result is trivial). We claim that
\be\label{eq:201}
\underline RF_0(p)=\sup_{q\ge \bar p}\{\min(F_0(q),H_{\min}(q))\}.
\ee
Indeed, let $q\ge p$ such that there exists $\a\in\{1,\dots,N\}$ such that $q^\a<\bar p^\a$. Then
$$\min(F_0(q),H_{\min}(q)) \le \min(F_0(q),H^\alpha(q^\alpha))=  H^\a(q^\a)<\lambda=F_0(p)=\underline R F_0(p)=\sup_{\tilde q\ge  p}\{\min(F_0(\tilde q),H_{\min}(\tilde q))\}$$
which proves \eqref{eq:201}. We then have
$$\lambda=F_0(p)=\underline R F_0(p)=\sup_{q\ge \bar p}\{\min(F_0(q),H_{\min}(q))\}\le \sup_{q\ge \bar p} F_0(q)=F_0(\bar p)\le F_0(p).$$
Therefore $F_0(\bar p)=F_0(p)$ and since $F_0$ is non-increasing, this implies \eqref{eq:const}.

\bigskip

Conversely, assume that \eqref{eq:const} is satisfied for every $p\in\R^N$.  We then have
$$F_0(p)\ge \underline R F_0(p)=\sup_{q\ge  p}\{\min(F_0(q),H_{\min}(q))\}\ge \min(F_0(\bar p),H_{\min}(\bar p))=F_0(\bar p)=F_0(p)$$
which implies that $F_0=\underline R F_0$ and ends the proof of the proposition.
\end{proof}
A direct consequence of this proposition is the following corollary:
\begin{Corollary}[Lower dimension restriction of sub-relaxed $F_0$ are sub-relaxed]\label{cor:6.7}
Assume that $H$ satisfy \eqref{eq::p00} and  that $F_0$  satisfy \eqref{eq::m11}. For $p\in\R^N$ and $\a \in \{1,\dots,N\}$, we define $F^{\a,p}_0:\R\mapsto \R$ by
$$F^{\a,p}_0(q^\a)=F_0(p^1,\dots,p^{\a-1},q^\a,p^{\a+1},\dots,p^N).$$
If $F_0$ is sub-relaxed with respect to $H$, then $F^{\a,p}_0$ is sub-relaxed with respect to $H^\a$.
\end{Corollary}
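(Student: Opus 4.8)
The plan is to deduce everything from the characterization of sub-relaxation in Proposition \ref{pro:6.6}, applied twice: once in dimension $N$ for $F_0$, and once in dimension $1$ for the restricted function $F_0^{\a,p}$. The key observation is that the "critical point" construction $p\mapsto \bar p$ is compatible with freezing all but one coordinate. Precisely, fix $\a$ and fix the other coordinates $p^\beta$, $\beta\ne\a$, and write $g=F_0^{\a,p}:\R\to\R$, which is continuous and non-increasing by \eqref{eq::m11}. Given $r\in\R$, let $\bar r\in\R$ be the one-dimensional critical point associated (via Proposition \ref{pro:6.6}) to $g$ with Hamiltonian $H^\a$: that is, $\bar r = r$ if $H^\a(r)\ge g(r)$, and otherwise $\bar r = \sup\{s\ge r: H^\a(\tilde s)<g(r)\ \text{for all}\ \tilde s\in[r,s)\}$.

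First I would check that, at the point $P:=(p^1,\dots,p^{\a-1},r,p^{\a+1},\dots,p^N)\in\R^N$, the $N$-dimensional critical point $\bar P$ of $F_0$ has $\a$-th coordinate equal to $\bar r$. This is immediate because $F_0(P)=g(r)$ and $H^\a(P^\a)=H^\a(r)$, so the defining formula for $\bar P^\a$ in Proposition \ref{pro:6.6} is literally the defining formula for $\bar r$ in the one-dimensional statement with $F_0(P)$ in place of $g(r)$ — and these two numbers agree. Next, since $F_0$ is assumed sub-relaxed with respect to $H$, Proposition \ref{pro:6.6} gives that $F_0\equiv F_0(P)$ on the box $[P,\bar P]=\prod_\beta[P^\beta,\bar P^\beta]$. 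Restricting this box to the segment where all coordinates but the $\a$-th are held fixed at $P^\beta=p^\beta$ (which is legitimate since $\bar P^\beta\ge p^\beta$, so $p^\beta\in[P^\beta,\bar P^\beta]$), we obtain that $g(s)=g(r)=F_0(P)$ for all $s\in[r,\bar r]$. That is exactly condition \eqref{eq:const} for $g$ with Hamiltonian $H^\a$ at the arbitrary point $r\in\R$.

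Since $r$ was arbitrary, the converse direction of Proposition \ref{pro:6.6}, now invoked in dimension $1$ for $g=F_0^{\a,p}$ and Hamiltonian $H^\a$, yields $\underline R_{H^\a} g = g$, i.e. $F_0^{\a,p}$ is sub-relaxed with respect to $H^\a$. This completes the proof. The only mild subtlety — and the one point deserving care — is the verification that $\bar P^\a=\bar r$ and that restricting the box to a coordinate segment is valid; both reduce to noting that the formula in Proposition \ref{pro:6.6} for a given coordinate depends on the other coordinates only through the single scalar $F_0(P)$, which is why the one-dimensional problem for $F_0^{\a,p}$ embeds cleanly into the $N$-dimensional one. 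No genuine obstacle arises beyond bookkeeping.
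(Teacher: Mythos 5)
Your proof is correct and is precisely the argument the paper intends: the corollary is stated there as ``a direct consequence'' of Proposition \ref{pro:6.6}, and your writeup fills in exactly the two relevant observations, namely that the $\a$-th coordinate of the critical point $\bar P$ depends on the other coordinates only through the scalar $F_0(P)=F_0^{\a,p}(r)$ (so $\bar P^\a=\bar r$), and that the constancy of $F_0$ on the box $[P,\bar P]$ restricts to the coordinate segment $[r,\bar r]$. Nothing further is needed.
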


We now give the characterization for super-relaxation.
\begin{Proposition}[Characterization of super-relaxation]\label{pro:6.8}
Let $H$ satisfy \eqref{eq::p00} and  $F_0$  satisfy \eqref{eq::m11}. For $p\in\R^N$, we define
$\underline p\in \R^N$ by
$$\underline p^\a=\left\{\begin{array}{lll}
p^\a&{\rm if}&H^\a(p^\a)\le F(p)\\
\inf\{q^\a\le p^\a, H^\a(\tilde q^\a)>F_0(p)\quad {\textrm{for all}}\quad \tilde q^\a\in (q^\a,p^\a]\}&{\rm if}&H^\a(p^\a)>F_0(p).
\end{array}
\right.
$$
Then $F_0$ is super-relaxed, i.e. $F_0=\overline R F_0$, if, and only if, for all $p\in \R^N$, we have
\begin{equation}\label{eq:const1}
F=const=F(p) \quad {\rm in}\quad  [ \underline{p},p],
\end{equation}
and
\begin{equation}\label{eq:underlinePfini}
\underline p^\a>-\infty\quad {\rm if}\quad H^\a(p^\a)>F_0(p).
\end{equation}
\end{Proposition}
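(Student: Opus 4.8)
The plan is to mirror the proof of Proposition~\ref{pro:6.6} (the sub-relaxation case), passing from $\underline R$ with its suprema over $q\ge p$ to $\overline R$ with its infima over $q\le p$, while paying attention to the one genuinely new feature: since we no longer assume $F_0$ semi-coercive, the candidate left endpoint $\underline p^\a$ may be $-\infty$, and so the extra condition \eqref{eq:underlinePfini} must be tracked throughout.

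First I would prove that $F_0=\overline R F_0$ implies \eqref{eq:const1} and \eqref{eq:underlinePfini}. Fix $p$ and set $\lambda=F_0(p)$. If $H_{\max}(p)\ge\lambda$ in every coordinate sense, i.e. $H^\a(p^\a)\le\lambda$ for all $\a$, then $\underline p=p$ and there is nothing to prove; otherwise let $I=\{\a: H^\a(p^\a)>\lambda\}$. The key reduction is the analogue of \eqref{eq:201}: I claim $\overline R F_0(p)=\inf_{q\le\underline p}\max(F_0(q),H_{\max}(q))$. Indeed, for $q\le p$ with $q^\a\in(\underline p^\a,p^\a]$ for some $\a\in I$, the definition of $\underline p^\a$ gives $H^\a(q^\a)>\lambda$, hence $\max(F_0(q),H_{\max}(q))\ge H^\a(q^\a)>\lambda=\overline R F_0(p)$, so such $q$ cannot lower the infimum below the value already attained. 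This is where \eqref{eq:underlinePfini} comes in: if some $\underline p^\a=-\infty$ with $\a\in I$, then along $q^\a\to-\infty$ with the other coordinates fixed at $p^\beta$ we would have $H^\a(q^\a)>\lambda$ forever, so $H^\a_-(p^\a)\ge\lambda$; but by Theorem~\ref{th:relax}, $\overline R\underline R F_0\ge H_-$ and since $F_0=\overline R F_0\le F_0$... actually the cleaner route is: $F_0=\overline R F_0$ forces $F_0$ super-relaxed, and I would show directly that a super-relaxed $F_0$ must be semi-coercive enough that $H^\a(p^\a)>F_0(p)$ can only happen when $H^\a$ does eventually dip below $F_0(p)$ to the left — equivalently that $\overline R F_0\le$ (something finite) fails if $\underline p^\a=-\infty$, because then $\overline R F_0(p)=+\infty\ne F_0(p)$. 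Granting the reduction, from $\lambda=F_0(p)=\overline R F_0(p)=\inf_{q\le\underline p}\max(F_0(q),H_{\max}(q))\ge\inf_{q\le\underline p}F_0(q)=F_0(\underline p)\ge F_0(p)$ one gets $F_0(\underline p)=F_0(p)$, and monotonicity of $F_0$ forces $F_0\equiv F_0(p)$ on $[\underline p,p]$, which is \eqref{eq:const1}.

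For the converse, assume \eqref{eq:const1} and \eqref{eq:underlinePfini} hold for every $p$. Then $F_0(p)\le\overline R F_0(p)$ requires an argument (unlike the sub-relaxation case where $\underline R F_0\le F_0$ is automatic): here $\overline R F_0(p)=\inf_{q\le p}\max(F_0(q),H_{\max}(q))\ge\inf_{q\le p}F_0(q)$, which is $\ge F_0(p)$ by monotonicity, so indeed $\overline R F_0\ge F_0$ always. For the reverse inequality $\overline R F_0(p)\le F_0(p)$, I would evaluate the infimum at $q=\underline p$: by \eqref{eq:underlinePfini}, $\underline p\in\R^N$, and by \eqref{eq:const1}, $F_0(\underline p)=F_0(p)=:\lambda$; it remains to check $H_{\max}(\underline p)\le\lambda$, i.e. $H^\a(\underline p^\a)\le\lambda$ for each $\a$. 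For $\a\notin I$ this holds since $\underline p^\a=p^\a$ and $H^\a(p^\a)\le\lambda$ by definition of $I$; for $\a\in I$, continuity of $H^\a$ and the defining property of $\underline p^\a$ as an infimum of points with $H^\a>\lambda$ just above give $H^\a(\underline p^\a)\le\lambda$ in the limit. Hence $\max(F_0(\underline p),H_{\max}(\underline p))\le\lambda=F_0(p)$, so $\overline R F_0(p)\le F_0(p)$, and combined with the previous inequality, $\overline R F_0=F_0$.

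The main obstacle is the treatment of condition \eqref{eq:underlinePfini} and the reduction formula $\overline R F_0(p)=\inf_{q\le\underline p}\max(F_0(q),H_{\max}(q))$ in the non-semi-coercive setting: one must argue carefully that if $F_0=\overline R F_0$ then $\underline p^\a$ is finite whenever $\a\in I$ — otherwise the infimum defining $\overline R F_0(p)$ over all of $q\le p$ would still be $\lambda$ but could not be witnessed by a finite point, which is the obstruction to running the argument. The cleanest way I see is to note that if $\underline p^\a=-\infty$ with $H^\a(p^\a)>\lambda$, then $H^\a_-(p^\a)\ge\lambda$, whence $H_-(p)\ge\lambda$; combined with $\overline R F_0 = F_0$ and an elementary estimate this is consistent, so finiteness of $\underline p^\a$ genuinely has to be \emph{postulated} as part of the characterization — which is exactly why \eqref{eq:underlinePfini} appears in the statement and not in Proposition~\ref{pro:6.6}. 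Everything else is a routine transcription, coordinate by coordinate, of the sub-relaxation argument with the reversal of inequalities, using $\max$/$H_{\max}$ in place of $\min$/$H_{\min}$.
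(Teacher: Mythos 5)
Your overall strategy is the paper's: reduce the infimum defining $\overline R F_0(p)$ to the set $\{q\le\underline p\}$, deduce $F_0(\underline p)=F_0(p)$ and hence \eqref{eq:const1}, and for the converse evaluate the infimum at $q=\underline p$ using $H_{\max}(\underline p)\le F_0(p)$. The converse direction and the derivation of \eqref{eq:const1} are correct. However, there is a genuine gap in your treatment of \eqref{eq:underlinePfini}, and it is not cosmetic: you conclude that the scenario $\underline p^\alpha=-\infty$ with $H^\alpha(p^\alpha)>F_0(p)$ is ``consistent'' with $F_0=\overline R F_0$ and that finiteness of $\underline p^\alpha$ ``genuinely has to be postulated.'' This cannot stand, because the proposition is an equivalence: the ``only if'' direction requires you to \emph{prove} that $F_0=\overline R F_0$ implies \eqref{eq:underlinePfini}. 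Your intermediate justification is also false as stated: one always has $\overline R F_0(p)\le\max(F_0(p),H_{\max}(p))<+\infty$, so the claim that $\overline R F_0(p)=+\infty$ in this case is wrong.

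The correct argument is a short contradiction using coercivity, and you were one step away from it. Suppose $F_0=\overline R F_0$, $\underline p^\alpha=-\infty$ and $H^\alpha(p^\alpha)>\lambda:=F_0(p)$. Then $H^\alpha(q^\alpha)>\lambda$ for every $q^\alpha\le p^\alpha$. You correctly observe that this gives $H^\alpha_-(p^\alpha)\ge\lambda$, but coercivity \eqref{eq::p00} gives strictly more: since $H^\alpha$ is continuous, tends to $+\infty$ at $-\infty$, and is $>\lambda$ pointwise on $(-\infty,p^\alpha]$, its infimum over that half-line is attained and therefore there is $\delta>0$ with $H^\alpha\ge\lambda+\delta$ on $(-\infty,p^\alpha]$. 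Hence
$$\lambda=F_0(p)=\overline R F_0(p)=\inf_{q\le p}\max\left(F_0(q),H_{\max}(q)\right)\ge\inf_{q\le p}H^\alpha(q^\alpha)\ge\lambda+\delta,$$
a contradiction. So \eqref{eq:underlinePfini} is a consequence of super-relaxedness, not a postulate; it appears in this statement (and has no counterpart in Proposition~\ref{pro:6.6}) only because the analogous point $\bar p^\alpha$ there is automatically finite --- coercivity at $+\infty$ forces $H^\alpha$ to exceed $F_0(p)$ eventually to the right, whereas nothing forces $H^\alpha$ to dip below $F_0(p)$ to the left, so the finiteness must be recorded explicitly as a hypothesis for the ``if'' direction. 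With this repaired, the rest of your argument goes through and coincides with the paper's proof.
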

\begin{proof}
We begin to show that $\overline R F_0=F_0$ implies \eqref{eq:const1} and \eqref{eq:underlinePfini}. Let $p\in\R^N$ be such that
$$\lambda=F_0(p)<H_{\max}(p)$$
(otherwise $ \underline{p}=p$ and the result is trivial). 
We first show \eqref{eq:underlinePfini}. By contradiction, assume that there exists $\a$ such that $\underline p^\a=-\infty$ and $H^\a(p^\a)>F_0(p)$. By definition of $\underline p^\alpha$, we have $H^\alpha(q^\alpha)>F_0(p)$ for all $q^\alpha\le p^\alpha$.

By coercivity of $H^\a$, there exists $\delta>0$ such that
$$H^\a(q^\a)\ge \delta +\lambda\qquad \textrm{for all } \; q^\a  \le p^\a.$$
We then have
\begin{align*}
\lambda=F_0(p)=\overline R F_0(p)=&\inf_{q\le p}\{\max (F_0(q),H_{\max}(q))\}\\
\ge& \inf_{q\le p}H^\a(q^\a)\ge \delta+\lambda,
\end{align*}
which is absurd. Then \eqref{eq:underlinePfini} is satisfied. We now prove \eqref{eq:const1}. For all $\tilde q\le p$ such that there exists $\a\in\{1,\dots,N\}$ such that $\tilde q^\a>\underline p^\a$, we have
$$\max(F_0(\tilde q),H_{\max}(\tilde q))\ge H^\a(\tilde q^\a)>\lambda=F_0(p)=\overline R F_0(p)=\inf_{q\le p}\max(F_0(q),H_{\max}(q)).$$
Hence
$$\lambda=F_0(p)=\overline R F_0(p)=\inf_{q\le \underline p}\{\max(F_0(q),H_{\max}(q))\}\ge \inf_{q\le \underline p} F_0(q)=F_0(\underline p)\ge F_0(p).$$
Therefore $F_0(\underline p)=F_0(p)$ and since $F_0$ is non-increasing, this implies \eqref{eq:const1}.

\bigskip

Conversely, assume that \eqref{eq:const1} and \eqref{eq:underlinePfini} are satisfied for every $p\in\R^N$.  We then have
$$F_0(p)\le \overline R F_0(p)=\inf_{q\le  p}\{\max(F_0(q),H_{\max}(q))\}\le \max(F_0(\underline p),H_{\max}(\underline p))=F_0(\underline p)=F_0(p)$$
which implies that $F_0=\overline R F_0$ and ends the proof of the proposition.
\end{proof}
A direct consequence of this proposition is the following corollary:
\begin{Corollary}[Lower dimension restriction of super-relaxed $F_0$ are super-relaxed]\label{cor:6.10}
Assume that $H$ satisfy \eqref{eq::p00} and  that $F_0$  satisfy \eqref{eq::m11}. For $p\in\R^N$ and $\a \in \{1,\dots,N\}$, we define $F^{\a,p}_0:\R\mapsto \R$ by
$$F^{\a,p}_0(q^\a)=F_0(p^1,\dots,p^{\a-1},q^\a,p^{\a+1},\dots,p^N).$$
If $F_0$ is super-relaxed with respect to $H$, then $F^{\a,p}_0$ is super-relaxed with respect to $H^\a$.
\end{Corollary}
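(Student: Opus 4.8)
The plan is to reduce everything to the one–dimensional characterization of super-relaxed functions, i.e.\ to Proposition \ref{pro:6.8} applied with $N=1$, with Hamiltonian $H^\alpha$ and with the scalar function $g:=F_0^{\alpha,p}$.

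First I would fix $r\in\R$ and introduce the point $\tilde p=(p^1,\dots,p^{\alpha-1},r,p^{\alpha+1},\dots,p^N)\in\R^N$, so that $g(r)=F_0(\tilde p)$. I would then observe that the recipe defining $\underline p^\alpha$ in Proposition \ref{pro:6.8} depends only on $H^\alpha$, on the scalar sitting in the $\alpha$-th slot, and on the number $F_0$ takes at that point. Applying it at $\tilde p$ therefore produces a scalar $\underline{\tilde p}^\alpha$ which is literally the one–dimensional object $\underline r$ associated with $g$ and $H^\alpha$ (the value $F_0(p)$ in the definition being replaced by $g(r)=F_0(\tilde p)$).

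Next I would invoke the hypothesis that $F_0$ is super-relaxed: by Proposition \ref{pro:6.8}, $F_0\equiv F_0(\tilde p)$ on the box $[\underline{\tilde p},\tilde p]=\prod_{\beta}[\underline{\tilde p}^\beta,\tilde p^\beta]$, and $\underline{\tilde p}^\beta>-\infty$ whenever $H^\beta(\tilde p^\beta)>F_0(\tilde p)$. Since $\underline{\tilde p}^\beta\le\tilde p^\beta=p^\beta$ for every $\beta$, the segment $\{(p^1,\dots,p^{\alpha-1},s,p^{\alpha+1},\dots,p^N):\,s\in[\underline r,r]\}$ is contained in that box; reading off the constancy of $F_0$ along this segment gives $g(s)=g(r)$ for all $s\in[\underline r,r]$, which is exactly condition \eqref{eq:const1} for $g$. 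Taking $\beta=\alpha$ in the second property gives: if $H^\alpha(r)>g(r)$ then $\underline r=\underline{\tilde p}^\alpha>-\infty$, which is condition \eqref{eq:underlinePfini} for $g$. Since $r\in\R$ was arbitrary, the converse implication of Proposition \ref{pro:6.8} (with $N=1$) yields $g=\overline R_{H^\alpha} g$, i.e.\ $F_0^{\alpha,p}$ is super-relaxed with respect to $H^\alpha$.

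I do not expect a real obstacle here: once the one–dimensional characterization is available, the only points needing a little care are the identification of the $\alpha$-th component of the $N$-dimensional $\underline{\tilde p}$ with the scalar $\underline r$, and the verification that moving only the $\alpha$-th coordinate of $\tilde p$ down to $\underline r$ keeps the point inside $[\underline{\tilde p},\tilde p]$ — both immediate from the definitions (and the possibility $\underline{\tilde p}^\beta=-\infty$ for $\beta\ne\alpha$ is harmless, since those coordinates stay at the upper endpoint $p^\beta$). The proof of Corollary \ref{cor:6.7} for the sub-relaxation is identical, with Proposition \ref{pro:6.6} in place of Proposition \ref{pro:6.8}.
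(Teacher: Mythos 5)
Your proof is correct and follows exactly the route the paper intends: the corollary is stated there as a direct consequence of Proposition \ref{pro:6.8}, and your argument is precisely the spelling-out of that reduction (identifying the one-dimensional $\underline r$ with the $\alpha$-th component of $\underline{\tilde p}$ and restricting the constancy on the box $[\underline{\tilde p},\tilde p]$ to the segment in the $\alpha$-th coordinate). No gaps.
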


%

We now give an important result concerning the relaxation at crossing points.
\begin{Proposition}[Relaxation at crossing points ]\label{pro:6.20}
Let $H$ satisfy \eqref{eq::p00} and  $F_0$  satisfy \eqref{eq::m11}. For $p\in\R^N$, assume that
$$H^\a(p^\a)=F_0(p)\quad \textrm{for all } \alpha\in \{1,\dots,N\}.$$
Then 
$$\frak R F_0(p)=\underline R F_0(p)=\overline R F_0 (p)= F_0(p).$$

\end{Proposition}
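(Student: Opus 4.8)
The plan is to show the two inequalities $\frak R F_0(p)\le F_0(p)$ and $\frak R F_0(p)\ge F_0(p)$, together with the analogous bounds for $\underline R F_0(p)$ and $\overline R F_0(p)$; since one always has $\frak R F_0 = \overline R\,\underline R F_0$ and the semi-relaxations are ordered, it suffices to bound $\underline R F_0(p)$ from above and $\overline R F_0(p)$ from below, once we observe that at a crossing point both sandwich $F_0(p)$. Write $\lambda=F_0(p)$. First I would prove $\underline R F_0(p)\le \lambda$: by definition $\underline R F_0(p)=\sup_{q\ge p}\min\{F_0(q),H_{min}(q)\}$, and for any $q\ge p$ we split into cases. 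If $q=p$, the value is $\min\{\lambda, H_{min}(p)\}=\lambda$ since $H^\alpha(p^\alpha)=\lambda$ for all $\alpha$, so $H_{min}(p)=\lambda$. If $q\ne p$, pick $\alpha$ with $q^\alpha>p^\alpha$; then by monotonicity of $F_0$ we have $F_0(q)\le F_0(p)=\lambda$ (decreasing in the $\alpha$-th argument, other arguments nondecreased), hence $\min\{F_0(q),H_{min}(q)\}\le F_0(q)\le\lambda$. Taking the supremum gives $\underline R F_0(p)\le\lambda$. Dually, $\overline R F_0(p)=\inf_{q\le p}\max\{F_0(q),H_{max}(q)\}\ge\lambda$: for $q=p$ the value is $\max\{\lambda,H_{max}(p)\}=\lambda$ since $H_{max}(p)=\lambda$ as well, while for $q\ne p$ there is $\alpha$ with $q^\alpha<p^\alpha$, so $F_0(q)\ge F_0(p)=\lambda$, whence $\max\{F_0(q),H_{max}(q)\}\ge\lambda$; the infimum is therefore $\ge\lambda$.

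Next I would chain these through the composition. Since $\underline R F_0(p)\le\lambda = F_0(p)$ and, symmetrically, $\overline R F_0(p)\ge\lambda$, I want to combine them. Apply $\overline R$ to the inequality $\underline R F_0\le F_0$: by Lemma~\ref{lem:compR} (monotonicity of $\overline R$) we get $\frak R F_0 = \overline R\,\underline R F_0 \le \overline R F_0$. Unfortunately this upper-bounds $\frak R F_0$ by $\overline R F_0$, which I have only lower-bounded; so instead I evaluate more carefully at the point $p$. The cleanest route is: by Theorem~\ref{th:relax}, $\frak R F_0 = \underline R\,\overline R F_0$, and applying the just-proved pointwise bound $\underline R G(p)\le G(p)$ (valid for any monotone continuous $G$, here $G=\overline R F_0$) gives $\frak R F_0(p)=\underline R(\overline R F_0)(p)\le \overline R F_0(p)$. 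But I also have the general inequality $\underline R G\le G$ is only one half; I actually need $\overline R F_0(p)=\lambda$, not just $\ge\lambda$. So I would in fact establish the stronger fact that at a crossing point $\overline R F_0(p)=F_0(p)$ and $\underline R F_0(p)=F_0(p)$ exactly: the upper bounds above combined with the trivial lower bound $\underline R F_0(p)\ge\min\{F_0(p),H_{min}(p)\}=\lambda$ (take $q=p$) and the trivial upper bound $\overline R F_0(p)\le\max\{F_0(p),H_{max}(p)\}=\lambda$ (take $q=p$) pin both semi-relaxations equal to $\lambda$. Then $\frak R F_0(p) = \overline R\,\underline R F_0(p)$: since $\underline R F_0=\lambda$ at $p$ and $\underline R F_0\le F_0$ everywhere, and since $\overline R$ of the restriction near $p$... here I would instead simply note $\frak R F_0(p)=\overline R(\underline R F_0)(p)\le \max\{\underline R F_0(p),H_{max}(p)\}=\max\{\lambda,\lambda\}=\lambda$ (taking $q=p$ in the $\overline R$ infimum), and $\frak R F_0(p)\ge H_-(p)=\max_\alpha H^\alpha_-(p^\alpha)$; but $H^\alpha_-(p^\alpha)=\inf_{q^\alpha\le p^\alpha}H^\alpha(q^\alpha)$ need not equal $\lambda$. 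So for the lower bound I use instead $\frak R F_0 = \underline R\,\overline R F_0$: $\frak R F_0(p)=\underline R(\overline R F_0)(p)\ge \min\{\overline R F_0(p),H_{min}(p)\}=\min\{\lambda,\lambda\}=\lambda$, taking $q=p$ in the $\underline R$ supremum. Combining, $\lambda\le\frak R F_0(p)\le\lambda$, so $\frak R F_0(p)=\lambda=F_0(p)$, and together with $\underline R F_0(p)=\overline R F_0(p)=\lambda$ this is exactly the claim.

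The only delicate point — and the one I would phrase most carefully — is that these are pointwise statements at $p$, so in each application of a semi-relaxation operator to an already-relaxed function I must use that the relevant extremizing competitor $q=p$ is admissible ($p\ge p$ or $p\le p$), together with the fact that $\underline R F_0(p)=\overline R F_0(p)=F_0(p)$ holds at this specific $p$ so that evaluating the outer operator at $p$ with competitor $q=p$ returns $\min$ or $\max$ of $F_0(p)$ against $H_{min}(p)=H_{max}(p)=F_0(p)$. No compactness or continuity of the optimizers is needed here, unlike in Propositions~\ref{pro:n1}--\ref{pro:n3}; the whole argument is a short monotonicity computation once one exploits that at a crossing point $H_{min}(p)=H_{max}(p)=F_0(p)$, which collapses all the $\min$/$\max$ that normally produce genuine relaxation.
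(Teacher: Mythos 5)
Your proof is correct and follows essentially the same route as the paper's: both sides bound the semi-relaxations by taking the competitor $q=p$ in one direction and using the monotonicity of $F_0$ in the other, exploiting that $H_{\min}(p)=H_{\max}(p)=F_0(p)$ at a crossing point. You are in fact slightly more complete than the paper, which only writes out $\overline R F_0(p)=F_0(p)$ and leaves the passage from the two semi-relaxation identities to $\frak R F_0(p)=F_0(p)$ implicit, whereas you derive it explicitly from the two factorizations $\frak R F_0=\overline R\,\underline R F_0=\underline R\,\overline R F_0$ with the competitor $q=p$.
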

\begin{proof}
We only prove that $\overline R F_0 (p)= F_0(p)$, the other one being similar.
First, by definition of $\overline R$, we have
$$\overline R F_0(p)=\inf_{q\le p} \max\left\{F_0,H_{\max}\right\}(q)\le \max\left\{F_0,H_{\max}\right\}(p)=F_0(p)$$
since $H_{\max}(p)=F_0(p)$.
On the other hand, we have
$$\overline R F_0(p) =\inf_{q\le p} \max(F_0(q),H_{\max}(q))\ge \inf_{q\le p} F_0(q) =F_0(p).$$
This ends the proof of the Proposition.
\end{proof}

\bigskip

We now  introduce the notion of characteristic points which play an important role in the sequel.

\begin{defi}[Characteristic points]\label{defi::50}
\begin{enumerate}[label=(\roman*)]
\item $p\in \R^N$  is a {\emph super-characteristic point} of $F_0$ if there exists $\varepsilon>0$ such that $H^\alpha(p^\alpha)=F_0(p)$ and 
$H^\alpha(q^\alpha)>H^\alpha(p^\alpha)$ for all $q^\alpha\in \left(p^\alpha,p^\alpha +\varepsilon\right)$ and for all $\alpha=1,\dots,N$. We denote by $\overline \chi (F_0)$ the set of super-characteristic points.
\item $p\in \R^N$  is a {\emph sub-characteristic point} of $F_0$ if there exists $\varepsilon>0$ such that $H^\alpha(p^\alpha)=F_0(p)$ and 
$H^\alpha(q^\alpha)<H^\alpha(p^\alpha)$ for all $q^\alpha\in \left(p^\alpha-\e,p^\alpha \right)$ and for all $\alpha=1,\dots,N$. We denote by $\underline{\chi} (F_0)$ the set of sub-characteristic points.
\item The set of all {\emph characteristic points} is denoted by $\chi(F_0)$, i.e. $\chi(F_0):=\overline{\chi}(F_0)\cup \underline{\chi}(F_0)$.
\end{enumerate} 
\end{defi}

The following proposition will be useful for the reduction of test functions.
\begin{Proposition}[Properties of relaxation]\label{pro:pro-relax}
Let $H$ satisfy \eqref{eq::p00} and  $F_0$  satisfy \eqref{eq::m11}. Then
\begin{equation}\label{eq:pro-relax}
\frak R F_0\le \overline R F_0\le F_0 \quad {\rm on}\; \underline{\chi}(\frak R F_0)
\end{equation}
and 
\begin{equation}\label{eq:pro-relax2}
\frak R F_0\ge \underline R F_0\ge F_0 \quad {\rm on}\; \overline{\chi}(\frak R F_0).
\end{equation}
\end{Proposition}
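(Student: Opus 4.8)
The plan is to split each of the two chains into an elementary inequality that holds \emph{everywhere} and a genuinely local inequality that exploits the characteristic-point structure. Directly from the definitions (as in \eqref{eq::N1}) one has $\underline R G\le G$ for every admissible $G$, and symmetrically $\overline R G\ge G$. Applying this with $G=\overline R F_0$ and $G=\underline R F_0$ and using $\frak R F_0=\underline R\overline R F_0=\overline R\underline R F_0$ from Theorem~\ref{th:relax} gives $\frak R F_0=\underline R(\overline R F_0)\le\overline R F_0$ and $\frak R F_0=\overline R(\underline R F_0)\ge\underline R F_0$ on all of $\R^N$. It therefore remains to prove $\overline R F_0\le F_0$ on $\underline\chi(\frak R F_0)$ and $\underline R F_0\ge F_0$ on $\overline\chi(\frak R F_0)$. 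I would fix a characteristic point $p$ and set $\lambda:=\frak R F_0(p)$; by Definition~\ref{defi::50} we have $H^\a(p^\a)=\lambda$ for every $\a$, hence $H_{\min}(p)=H_{\max}(p)=\lambda$. Testing the infimum (resp. supremum) at $q=p$ yields $\overline R F_0(p)\le\max\{F_0(p),\lambda\}$ and $\underline R F_0(p)\ge\min\{F_0(p),\lambda\}$, so the two remaining inequalities follow once I establish the comparisons (A) $\lambda\le F_0(p)$ on $\underline\chi(\frak R F_0)$ and (B) $\lambda\ge F_0(p)$ on $\overline\chi(\frak R F_0)$.

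To prove (B) I would write $\frak R F_0=\underline R G$ with $G:=\overline R F_0$. By Lemma~\ref{lem:n1}, $G=\overline R\max(F_0,H_-)$ and $\max(F_0,H_-)$ is semi-coercive (as already used in the proof of Theorem~\ref{th:relax}), so $G$ equals the Godunov action $\max(F_0,H_-)\,\overline G$ and is therefore continuous; moreover $G\ge F_0$ everywhere. Since $\underline R G(p)=\sup_{q\ge p}\min\{G(q),H_{\min}(q)\}=\lambda$, every $q\ge p$ satisfies $\min\{G(q),H_{\min}(q)\}\le\lambda$. For $p\in\overline\chi(\frak R F_0)$ and $\delta\in(0,\varepsilon)$ the point $q=p+\delta\mathbf{1}$ obeys $H^\a(q^\a)>H^\a(p^\a)=\lambda$ for all $\a$, so $H_{\min}(q)>\lambda$; the bound above then forces $G(q)\le\lambda$. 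Letting $\delta\to 0^+$ and using continuity of $G$ gives $G(p)\le\lambda$, whence $F_0(p)\le G(p)\le\lambda$, which is (B).

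Claim (A) is the dual statement, with one extra twist forced by the absence of a semi-coercivity hypothesis on $F_0$. I would set $\tilde F_0:=\max(F_0,H_-)$, which is semi-coercive, and $G:=\underline R\tilde F_0=\tilde F_0\underline G$, which is therefore continuous and satisfies $G\le\tilde F_0$. Since $\frak R F_0=\frak R\tilde F_0=\overline R\underline R\tilde F_0=\overline R G$ by Theorem~\ref{th:relax}, the identity $\overline R G(p)=\inf_{q\le p}\max\{G(q),H_{\max}(q)\}=\lambda$ gives $\max\{G(q),H_{\max}(q)\}\ge\lambda$ for all $q\le p$. For $p\in\underline\chi(\frak R F_0)$ and $\delta\in(0,\varepsilon)$ the point $q=p-\delta\mathbf{1}$ satisfies $H^\a(q^\a)<\lambda$ for all $\a$, so $H_{\max}(q)<\lambda$ and hence $G(q)\ge\lambda$; continuity then yields $G(p)\ge\lambda$, i.e. $\lambda\le G(p)\le\tilde F_0(p)=\max(F_0(p),H_-(p))$. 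To conclude I would note that at a sub-characteristic point $H^\a$ is strictly below $\lambda$ just to the left of $p^\a$, so $H^\a_-(p^\a)=\inf_{q^\a\le p^\a}H^\a(q^\a)<\lambda$ for every $\a$ and thus $H_-(p)<\lambda$; the maximum above therefore cannot be attained by $H_-(p)$, forcing $\lambda\le F_0(p)$, which is (A).

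The main obstacle is the continuity of the intermediate relaxation $G$, which is exactly what makes the limit $\delta\to0^+$ legitimate: such continuity is not available for a general non-increasing $F_0$ and must be imported through the semi-coercive truncation $\max(F_0,H_-)$, via Lemma~\ref{lem:n1} and the continuity of the Godunov actions $F_0\underline G$ and $F_0\overline G$. The asymmetry between (A) and (B)—that (A) requires the additional step ruling out $H_-(p)$ as the active term while (B) does not—reflects the fact that $\overline R F_0\ge F_0$ holds verbatim, whereas the matching lower bound $G(p)\le\tilde F_0(p)$ for $\underline R$ only survives after the truncation and must be cleaned up using $H_-(p)<\lambda$.
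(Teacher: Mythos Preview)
Your argument is correct, but it follows a different route from the paper's. You share with the paper the observation that $\frak R F_0\le\overline R F_0$ (resp.\ $\ge\underline R F_0$) holds everywhere by $\underline R G\le G$ applied to $G=\overline R F_0$. For the remaining local inequality $\overline R F_0(p)\le F_0(p)$ at $p\in\underline\chi(\frak R F_0)$, the paper argues directly by contradiction: if $F_0(p)<\overline R F_0(p)$ then, testing $\overline R F_0(p)\le\max\{F_0(p),H_{\max}(p)\}$ and using $H_{\max}(p)=\frak R F_0(p)$, one gets $\overline R F_0(p)=\frak R F_0(p)$, hence $p\in\underline\chi(\overline R F_0)$; then by continuity of $F_0$ alone one finds $\bar q<p$ with both $F_0(\bar q)$ and $H_{\max}(\bar q)$ strictly below $\overline R F_0(p)$, whence $\overline R F_0(\bar q)<\overline R F_0(p)$, contradicting the monotonicity of $\overline R F_0$. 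Your approach instead first establishes the auxiliary comparison $\frak R F_0(p)\le F_0(p)$ (your claim~(A)) by passing through the semi-coercive truncation $\tilde F_0=\max(F_0,H_-)$, invoking the Godunov identification $\underline R\tilde F_0=\tilde F_0\underline G$ to secure continuity of the intermediate relaxation, and then cleaning up with $H_-(p)<\lambda$. Both are valid; the paper's argument is lighter in that it avoids the Godunov machinery and the truncation altogether, relying only on the continuity of $F_0$ and the monotonicity of $\overline R F_0$, while your approach is more symmetric between (A) and (B) and makes transparent where semi-coercivity enters.
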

\begin{proof}
We only prove \eqref{eq:pro-relax}, the proof of \eqref{eq:pro-relax2} being similar. Let $p\in \underline{\chi}(\frak R F_0)$. By Theorem \ref{th:relax}, we have
$$\frak RF_0(p)=\underline R \overline RF_0(p)=\sup_{q\ge p}\min(\overline R F_0(q),H_{\min}(q))\le \sup_{q\ge p} \overline R F_0(q)=\overline RF_0(p)$$
where we used the monotonicity of $\overline R F_0$ for the last equality. Note also that this inequality is true for all $p\in \R^N$, and not only for $p\in  \underline{\chi}(\frak R F_0)$.

 It just remains to show that 
 \begin{equation}\label{eq:0010}
 \overline R F_0(p)\le F_0(p).
 \end{equation}
 Assume by contradiction that $F_0(p)<\overline R F_0(p)$. Since $p\in \underline{\chi} (\frak R F_0)$, we have
 $H_{\max}(p)=H_{\min}(p)=\frak R F_0(p)$ and
 $$H_{\max}(p)=\frak R F_0(p)\le \overline R F_0(p)\le \max(F_0(p),H_{\max}(p))= H_{\max}(p).$$
 Hence $\frak R F_0(p)=\overline RF_0(p)$ and $p\in \underline{\chi}(\overline R F_0)$. This implies that there exists $\e>0$ such that
 $$H_{\max}(q)<\overline R F_0(p)\quad \text{for all }q\in\prod_{\alpha=1,\dots,N} (p^\alpha-\e,p^\alpha). 
 $$By continuity of $F_0$, we can find $\bar q\in  \prod_{\alpha=1,\dots,N} (p^\alpha-\e,p^\alpha)$ such that $F_0(\bar q)<\overline R F_0(p)$. We then have
 $$F_0(\bar q)<\overline R F_0(p)\quad {\rm and}\quad H_{\max}(\bar q)<\overline R F_0(p).$$
Using the definition of $\overline R F_0$, we  get
$$\overline R F_0(\bar q)=\inf_{q\le \bar q}\max(F_0(q),H_{\max}(q))\le \max(F_0(\bar q), H_{\max}(\bar q))<\overline R F_0(p),$$
which contradicts the fact that $\overline R F_0$ is non-increasing. This proves \eqref{eq:0010} and ends the proof of the proposition.
\end{proof}

\begin{Lemma}[Constant sub-relaxed and super-relaxed $F_0$ in a box]\label{lem:const}
Let $H$ satisfy \eqref{eq::p00} and  $F_0$  satisfy \eqref{eq::m11}. If $\underline R F_0(\underline p_0)>H_{\max}(\underline p_0)$ for some $\underline p_0\in \R^N$, then there exists $\underline p_1> \underline p_0$ (i.e. such that  $\underline p_1^\a>\underline p_0^\a$ for all $\a\in\{1,\dots,N\}$), such that
$$\underline p_1\in \underline{\chi}(\underline RF_0)\quad{\rm and}\quad \underline R F_0(\underline p_0)=\underline RF_0(\underline p_1).$$
 In the same way, if $\overline R F_0(\overline p_0)<H_{\min}(\overline p_0)$ for some $\overline p_0\in \R^N$, then there exists $\overline p_1< \overline p_0$ (i.e. such that $\overline p_1^\a<\overline p_0^\a$ for all $\a\in\{1,\dots,N\}$),  such that
$$\overline p_1\in \overline{\chi}(\overline RF_0)\quad{\rm and}\quad \overline R F_0(\overline p_0)=\overline RF_0(\overline p_1).$$

\end{Lemma}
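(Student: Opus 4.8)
The plan is to prove only the first statement (the second being obtained by the symmetry $p \mapsto -p$, $H^\alpha(p^\alpha) \mapsto H^\alpha(-p^\alpha)$, which exchanges $\underline R$ and $\overline R$). Write $F_1 = \underline R F_0$; by Lemma \ref{lem:proj}, $F_1$ is itself sub-relaxed, i.e. $\underline R F_1 = F_1$, and $F_1$ is continuous and non-increasing. The hypothesis is $F_1(\underline p_0) > H_{\max}(\underline p_0)$; set $\lambda := F_1(\underline p_0)$. The first step is to apply the characterization of sub-relaxation (Proposition \ref{pro:6.6}) to $F_1$ at the point $\underline p_0$: since $F_1 = \underline R F_1$, we have $F_1 \equiv \lambda$ on the box $[\underline p_0, \bar p_0]$, where $\bar p_0^\alpha$ is, for each $\alpha$, the first value $q^\alpha \ge \underline p_0^\alpha$ at which $H^\alpha$ reaches $\lambda$ from below (and $\bar p_0^\alpha = \underline p_0^\alpha$ in the degenerate case $H^\alpha(\underline p_0^\alpha) \ge \lambda$, which here is excluded since $\lambda > H_{\max}(\underline p_0) \ge H^\alpha(\underline p_0^\alpha)$ for all $\alpha$). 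So in fact $H^\alpha(\underline p_0^\alpha) < \lambda$ for every $\alpha$, hence $\bar p_0^\alpha > \underline p_0^\alpha$ strictly for every $\alpha$ provided $\bar p_0^\alpha$ is finite.

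The second step is to show that each $\bar p_0^\alpha$ is finite, which follows from coercivity of $H^\alpha$: since $H^\alpha(q^\alpha) \to +\infty$ as $|q^\alpha| \to \infty$, the level $\lambda$ is attained, so $\bar p_0^\alpha := \inf\{q^\alpha \ge \underline p_0^\alpha : H^\alpha(q^\alpha) = \lambda\}$ is a finite real number, and by continuity $H^\alpha(\bar p_0^\alpha) = \lambda$ while $H^\alpha(q^\alpha) < \lambda$ for all $q^\alpha \in [\underline p_0^\alpha, \bar p_0^\alpha)$. The third step is to verify that $\bar p_0$ is a sub-characteristic point of $F_1$. We have $H^\alpha(\bar p_0^\alpha) = \lambda = F_1(\bar p_0)$ (the latter from Step 1, since $\bar p_0 \in [\underline p_0, \bar p_0]$), so the crossing condition $H^\alpha(\bar p_0^\alpha) = F_1(\bar p_0)$ holds for all $\alpha$; and for $q^\alpha \in (\bar p_0^\alpha - \e, \bar p_0^\alpha)$ with $\e$ small enough we have $q^\alpha \ge \underline p_0^\alpha$ (since $\bar p_0^\alpha > \underline p_0^\alpha$), hence $H^\alpha(q^\alpha) < \lambda = H^\alpha(\bar p_0^\alpha)$. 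Thus $\bar p_0 \in \underline\chi(F_1) = \underline\chi(\underline R F_0)$. Setting $\underline p_1 := \bar p_0$, we have $\underline p_1 > \underline p_0$ and $\underline R F_0(\underline p_1) = F_1(\bar p_0) = \lambda = F_1(\underline p_0) = \underline R F_0(\underline p_0)$, as desired.

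The main obstacle is making sure the point produced is \emph{strictly} larger in every coordinate and genuinely sub-characteristic; this is exactly where the hypothesis $\underline R F_0(\underline p_0) > H_{\max}(\underline p_0)$ (rather than $\ge H_{\min}$) is essential, as it forces $H^\alpha(\underline p_0^\alpha) < \lambda$ simultaneously for \emph{all} $\alpha$, preventing any coordinate from being in the degenerate case $\bar p_0^\alpha = \underline p_0^\alpha$. A secondary technical point is the $\e$-maximizer caveat that already appears in the proof of Lemma \ref{lem:proj}: if the supremum defining $\underline R F_0(\underline p_0)$ is not attained, one argues with $\e$-maximizers and passes to the limit $\e \to 0$, using continuity of $F_0$, $F_1$ and the $H^\alpha$; but once we invoke Proposition \ref{pro:6.6} and Lemma \ref{lem:proj} directly for $F_1$, this subtlety is already absorbed into those results and need not be repeated here.
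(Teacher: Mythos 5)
Your treatment of the first half is correct and close to the paper's own argument: your $\bar p_0$ is exactly the point $\underline p_1$ the paper constructs, and you obtain $\underline R F_0(\underline p_1)=\underline R F_0(\underline p_0)$ by citing Proposition \ref{pro:6.6} applied to $F_1=\underline R F_0$ (legitimate thanks to Lemma \ref{lem:proj}), where the paper instead reruns the sup-manipulation directly. The only caveat there is that Proposition \ref{pro:6.6} formally assumes continuity of the junction function, and continuity of $\underline R F_0$ is not established in the paper for general (non-semi-coercive) $F_0$; since the direction of Proposition \ref{pro:6.6} you invoke only uses monotonicity, this is a presentational rather than a mathematical issue.

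The genuine gap is the reduction of the second half to the first ``by symmetry''. The map $p\mapsto -p$, $H^\a(p^\a)\mapsto H^\a(-p^\a)$, $F_0(p)\mapsto F_0(-p)$ turns $\sup_{q\ge p}\min(F_0(q),H_{\min}(q))$ into $\sup_{r\le -p}\min(F_0(r),H_{\min}(r))$, which is neither $\underline R$ nor $\overline R$ of anything; to exchange $\sup\leftrightarrow\inf$ and $\min\leftrightarrow\max$ one must also negate the \emph{values} of $F_0$ and $H^\a$, and $-H^\a(-\cdot)$ is anti-coercive, so the transformed data leave the class \eqref{eq::p00}. The asymmetry is intrinsic: in the first half, coercivity guarantees that each $\underline p_1^\a=\sup\{\dots\}$ is finite because $H^\a\to+\infty$ to the right, whereas in the second half the corresponding $\overline p_1^\a=\inf\{q^\a\le \overline p_0^\a:\ H^\a>\lambda\ \mbox{on}\ [q^\a,\overline p_0^\a]\}$ could a priori equal $-\infty$, since coercivity pushes $H^\a$ \emph{up} as $q^\a\to-\infty$. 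What rules this out is the supplementary fact $\overline R F_0\ge H_-$: it gives $H^\a_-(\overline p_0^\a)\le H_-(\overline p_0)\le\lambda<H^\a(\overline p_0^\a)$, and the infimum of $H^\a$ on $(-\infty,\overline p_0^\a]$ is attained by coercivity and continuity, so $H^\a$ reaches the level $\lambda$ at some finite point strictly to the left of $\overline p_0^\a$. This is precisely the extra condition \eqref{eq:underlinePfini} appearing in Proposition \ref{pro:6.8} but absent from Proposition \ref{pro:6.6}, and it is why the paper writes that the second statement is proved ``in a similar way (using moreover the general fact that $\overline R F_0\ge H_-$)''. Your proof must supply this additional argument; as written, the symmetry claim is false.
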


\begin{proof}
We just show the first part of the lemma, since the second one can be prove in a similar way (using moreover the general fact that $\overline R F_0\ge H_-$). For $\a\in\{1,\dots,N\}$, we set
$$\underline p_1^\a=\sup\{p^\a\ge \underline p_0^\a,\; H^\a(q^\a)<\underline RF_0(\underline p_0) \; \forall q^\a\in [\underline p_0^\a, p^\a]\}.$$
Since $H^\a$ is coercive, $\underline p_1^\a$ is well defined. Moreover, by continuity, we have 
$$H_{\min}(\underline p_1)=H^\a(\underline p_1^\a)=\underline R F_0(\underline p_0)\quad{\rm and} \quad H^\a<\underline R F_0(\underline p_0)\; {\rm on}\; [\underline p_0^\a,\underline p_1^\a).$$
Since $\underline R F_0(\underline p_0)>H_{\max}(\underline p_0)\ge H^\a(\underline p_0^\a)$ for all $\a$, we also have $\underline p_1^\a>\underline p_0^\a$. 

Using that $\underline R (\underline R F_0)=\underline R F_0$, we get
\begin{equation}\label{eq:0011}
\underline R F_0(\underline p_0)=\sup_{q\ge \underline p_0}\min (\underline R F_0(q),H_{\min}(q)).
\end{equation}
Now let $q\ge \underline p_0$ such that $q^\a<\underline p_1^\a$ for some $\a$. We then have 
$$H_{\min}(q)\le H^\a(q^\a)<\underline R F_0(\underline p_0)=H_{\min}(\underline p_1).$$
This implies, for all such $q$, that $\min(\underline R F_0(q),H_{\min}(q))<H_{\min}(\underline p_1)=\underline R F_0(\underline p_0)$. Hence, by \eqref{eq:0011}, we get
$$\underline R F_0(\underline p_0)=\sup_{q\ge \underline p_1}\min (\underline R F_0(q),H_{\min}(q))=\underline R F_0(\underline p_1).$$
In particular, $\underline p_1\in \underline{\chi}(\underline RF_0)$.
This ends the proof of the lemma.
\end{proof}

\section{Viscosity solutions and relaxation}\label{sec:5}
\subsection{Weak and strong solutions}

We now introduce our two notions of viscosity solutions: weak and strong viscosity solutions.
Every strong solution will be a weak solution. So, if not specified, any solution can be understood as a weak solution.

For $T>0$, we set 
$J_T=(0,T)\times J$  and we consider the class of test functions on $J_T$
$$C^1(J_T)=\left\{\varphi\in C(J_T), \quad \mbox{the restriction of $\varphi$ to $(0,T)\times J_\alpha$ is $C^1$ for $\alpha=1,\dots,N$}\right\}.$$

We recall the definition of upper and lower semi-continuous envelopes $u^*$ and $u_*$ of a function $u$ defined on $\left[0,T\right)\times J$,
$$u^*(t,x)=\limsup_{(s,y)\to (t,x)} u(s,y) \quad \mbox{and}\quad u_*(t,x)= \liminf_{(s,y)\to (t,x)} u(s,y).$$


\begin{defi}[Weak viscosity solution]\label{defi::1}
Assume that $H$, $F_0$ and $u_0$ satisfy respectively \eqref{eq::p00}, \eqref{eq::m11} and \eqref{eq::p22}, and let $u: [0,+\infty)\times J\to \R$. We say that $u$ is a  {\emph weak $F_0$-subsolution} (resp. weak $F_0$-supersolution) of \eqref{eq::m10}
in $(0,T)\times J$ if $u^*$ is locally bounded from above (resp. $u_*$ is locally bounded from below) and for all test function $\varphi\in C^1(J_T)$ touching $u^*$ from above (resp. $u_*$ from below) at $(t_0,x_0)\in J_T$, we have
$$\varphi_t + H_\alpha(\varphi_x)\le 0 \quad (\mbox{resp.}\quad \ge 0)\quad \mbox{at}\quad (t_0,x_0)\quad {\rm if}\; x_0\in J_\alpha^*$$
and
\begin{equation}\label{eq::p30}
\begin{array}{llll}
&\varphi_t + \min \left\{F_0(\varphi_x), H_{min}(\varphi_x)\right\}&\le 0 &\quad \mbox{at}\quad (t_0,x_0),\\
(\mbox{resp.}\quad &\varphi_t + \max \left\{F_0(\varphi_x), H_{max}(\varphi_x)\right\}&\ge 0 &\quad \mbox{at}\quad (t_0,x_0)),
\end{array}
\end{equation}
if $x_0=0$.\medskip

We say that $u$ is a weak $F_0$-subsolution (resp. weak $F_0$-supersolution) of \eqref{eq::m10}, \eqref{eq::p21} on $[0,T)\times J$ if additionally
$$u^*(0,x)\le u_0(x)\quad (\mbox{resp.} \quad u_*(0,x)\ge u_0(x))\quad \mbox{for all}\quad x\in J.$$
\medskip

We say that $u$ is a {\emph weak $F_0$-solution} if $u$ is both a weak $F_0$-subsolution and a weak $F_0$-supersolution.
\end{defi}

Notice that the first line of \eqref{eq::p30} means for subsolutions that either the desired junction subsolution inequality is satisfied,
or the PDE inequality is satisfied for at least one branch $J_\alpha$. The remark is similar for supersolutions.
A good property of this notion of relaxed solutions is its stability.
For instance the limit of a sequence of weak $F_0$-subsolutions (resp. weak $F_0$-supersolutions) $u_\varepsilon$ is still a weak $F_0$-subsolution
(resp. weak $F_0$-supersolution).\medskip

Another application of the notion of relaxed solution is an existence result, 
easily obtained adapting Perron's method, so we skip the proof (indeed the interested reader may look at \cite[Theorem 2.14]{IM1} where the proof only uses the monotonicity of $F_0$ and the continuity of the Hamiltonians):
\begin{theo}[Existence of relaxed solutions]\label{th::p40}
Assume that $H$, $F_0$, $u_0$ satisfy respectively \eqref{eq::p00}, \eqref{eq::m11} and \eqref{eq::p22}.
Then there exists a relaxed solution $u$ to \eqref{eq::m10} with initial data \eqref{eq::p21}.
\end{theo}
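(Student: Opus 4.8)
The plan is to use Perron's method, exactly as in \cite[Theorem 2.14]{IM1}: the argument uses only the continuity and coercivity of the $H^\alpha$, the continuity and monotonicity of $F_0$, and the uniform continuity of $u_0$. \textbf{Step 1 (continuous barriers).} For $\lambda>0$ let $u_0^{+,\lambda}$ and $u_0^{-,\lambda}$ be the standard $\lambda$-Lipschitz regularizations of $u_0$ from above and below (sup- and inf-convolution with $\lambda\, d(\cdot,\cdot)$, $d$ the geodesic distance on $J$), so that $u_0^{-,\lambda}\le u_0\le u_0^{+,\lambda}$ on $J$ and $u_0^{\pm,\lambda}\to u_0$ uniformly as $\lambda\to+\infty$. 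Since the $H^\alpha$ and $F_0$ are continuous, they are bounded on the compact set of admissible gradient values $[-\lambda,\lambda]$ (resp.\ $[-\lambda,\lambda]^N$ at the junction point); using this and the coercivity of $H$, one checks that for $K_\lambda>0$ large enough the continuous functions
$$\overline u^{\lambda}(t,x)=u_0^{+,\lambda}(x)+K_\lambda\,t,\qquad \underline u^{\lambda}(t,x)=u_0^{-,\lambda}(x)-K_\lambda\,t$$
are respectively a weak $F_0$-supersolution and a weak $F_0$-subsolution of \eqref{eq::m10}; at $x=0$ the verification uses that the supersolution test at the junction involves $\max\{F_0,H_{max}\}$ and the subsolution test involves $\min\{F_0,H_{min}\}$, both dominated by $K_\lambda$.

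\textbf{Step 2 (Perron envelope).} Fix $\lambda_0>0$, set $\overline u=\overline u^{\lambda_0}$, and define
$$u(t,x)=\sup\{\,v(t,x):\ v \text{ is a weak }F_0\text{-subsolution of }\eqref{eq::m10},\ v\le \overline u,\ v^*(0,\cdot)\le u_0\,\}.$$
The family is nonempty — it contains every $\underline u^{\lambda}$, since $u_0^{-,\lambda}\le u_0\le u_0^{+,\lambda_0}$ gives $\underline u^{\lambda}\le\overline u$, and $(\underline u^{\lambda})^*(0,\cdot)=u_0^{-,\lambda}\le u_0$ — and it is dominated by $\overline u$. By the stability of the class of weak $F_0$-subsolutions under suprema (the same reasoning as for the stability under limits noted after Definition \ref{defi::1}), the envelope $u^*$ is a weak $F_0$-subsolution with $u^*\le\overline u$ and $u^*(0,\cdot)\le u_0$; and since $u\ge \underline u^{\lambda}$ for every $\lambda$, passing to the limit $\lambda\to+\infty$ gives $u_*(0,\cdot)\ge u_0$, hence $u(0,\cdot)=u_0$.

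\textbf{Step 3 (bump argument).} It remains to show $u_*$ is a weak $F_0$-supersolution. If not, some $\varphi\in C^1(J_T)$ touches $u_*$ from below at $(t_0,x_0)\in J_T$ (so $t_0>0$) and the relevant supersolution inequality fails strictly there. The case $x_0\in J_\alpha^*$ is the classical interior situation. If $x_0=0$, the failure reads $\varphi_t+\max\{F_0(\varphi_x),H_{max}(\varphi_x)\}<0$ at $(t_0,0)$; this single strict inequality already forces $\varphi_t+H^\alpha(\varphi_x)<0$ for every $\alpha$ (since $H^\alpha(\varphi_x)\le H_{max}(\varphi_x)$) and $\varphi_t+\min\{F_0(\varphi_x),H_{min}(\varphi_x)\}<0$ (since $\min\{F_0,H_{min}\}\le\max\{F_0,H_{max}\}$), so by continuity $\varphi$ is a strict classical subsolution of every inequality defining a weak $F_0$-subsolution on a small neighbourhood of $(t_0,0)$, both on the adjacent branches and at the origin. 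Since $\overline u$ is a continuous supersolution, $\varphi(t_0,x_0)=u_*(t_0,x_0)<\overline u(t_0,x_0)$ (otherwise $\varphi$ would touch $\overline u$ from below at $(t_0,x_0)$, contradicting the failure), so for $\delta>0$ small, gluing $\max(u,\varphi+\delta)$ into $u$ on a small ball around $(t_0,x_0)$ produces a weak $F_0$-subsolution which still satisfies $v\le\overline u$ and $v^*(0,\cdot)\le u_0$ and is strictly larger than $u$ near $(t_0,x_0)$ — contradicting the definition of $u$. Hence $u_*$ is a weak $F_0$-supersolution, and together with Step 2, $u$ is a weak (relaxed) $F_0$-solution of \eqref{eq::m10}, \eqref{eq::p21} in the sense of Definition \ref{defi::1}.

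I expect the only genuinely delicate point to be the junction-point bookkeeping: arranging that the barriers of Step 1 respect the desired junction condition, and carrying out the bump of Step 3 at $x_0=0$. Both turn out to be painless, because at the origin the supersolution test is governed by $\max\{F_0,H_{max}\}$, which simultaneously controls the $N$ branch Hamiltonians and $\min\{F_0,H_{min}\}$; everything else is the textbook Perron scheme, which is why the authors only sketch it.
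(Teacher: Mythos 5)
Your route is the one the paper itself points to: Perron's method as in \cite[Theorem 2.14]{IM1}, with Lipschitz-regularized barriers, the envelope of weak $F_0$-subsolutions, and a bump argument at the junction. Steps 1 and 3 are sound; in particular your observation that a strict failure of the supersolution inequality $\varphi_t+\max\{F_0,H_{max}\}(\varphi_x)<0$ at $x_0=0$ automatically makes $\varphi$ a strict subsolution of every inequality in Definition \ref{defi::1} is exactly the point that makes the junction bookkeeping painless. (One small imprecision in Step 1: at $x=0$ the gradient of a test function touching the barrier is only \emph{one-sidedly} bounded on each branch, so it does not live in the compact set $[-\lambda,\lambda]^N$; what saves the verification is the monotonicity of $F_0$, which gives $F_0(\varphi_x)\ge F_0(\lambda,\dots,\lambda)$ for the supersolution test and $F_0(\varphi_x)\le F_0(-\lambda,\dots,-\lambda)$ for the subsolution test.)

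There is, however, one step in Step 2 that does not follow as written: with the admissible class cut only by the single barrier $\overline u=\overline u^{\lambda_0}$, the envelope satisfies $u^*(0,\cdot)\le u_0^{+,\lambda_0}$, which is in general strictly larger than $u_0$. The constraint $v^*(0,\cdot)\le u_0$ imposed on each member of the class does not pass to the upper semi-continuous envelope of the supremum (the $\limsup$ at $t=0$ is taken over $t>0$). You cannot repair this by comparing each admissible $v$ with $\overline u^\lambda$ either, since the comparison principle (Theorem \ref{th:PC}) requires the semi-coercivity \eqref{eq::p63}, which is not assumed here. The standard fix is to define the admissible class with the constraint $v\le\overline u^{\lambda}$ for \emph{all} $\lambda>0$; the class still contains every $\underline u^{\mu}$, and then $u^*(0,\cdot)\le\inf_{\lambda}u_0^{+,\lambda}=u_0$, matching the symmetric argument you already give for $u_*(0,\cdot)\ge u_0$. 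With that one-line change the proof is complete and coincides with the argument the paper delegates to \cite{IM1}.
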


We now give a second definition which requires more on the solution.
\begin{defi}[Strong viscosity solution]\label{defi::2}
The definition of strong $F_0$-subsolutions, strong $F_0$-supersolutions and strong $F_0$-solutions
is exactly the same word by word as in Definition \ref{defi::1}, except that we replace condition \eqref{eq::p30}
by the following one
\begin{equation}\label{eq::p30bis}
\begin{array}{llll}
&\varphi_t +F_0(\varphi_x) &\le 0 &\quad \mbox{at}\quad (t_0,x_0),\\
(\mbox{resp.}\quad &\varphi_t +F_0(\varphi_x)&\le 0 &\quad \mbox{at}\quad (t_0,x_0)).
\end{array}
\end{equation}
\end{defi}

From the definition itself, we see that any strong solution is a weak solution, but the converse is false in general.
We will show in the next subsections that the reverse is true if the junction condition $F_0$ is self-relaxed, i.e. satisfies $F_0=\frak R F_0$.

\subsection{Weak continuity condition at the junction}
We now introduce  the weak continuity condition that will play an important role for reducing the set of test function.
We say that $u$ satisfies  the weak continuity condition if 
\begin{equation}\label{eq:weakcontinuity}
u(t,0)=\limsup_{(s,y)\to (t,0),\ y\in J_\alpha^*}u(s,y)\quad \mbox{for all}\; t>0 \; \mbox { and for each}\; \alpha=1,\dots,N.
\end{equation}

Here the choice of the $\limsup$ (instead of the $\liminf$) is due to the fact that the Hamiltonians $H^\alpha$ are coercive.

We now state the following result whose proof is done in  \cite{IM1} (see there Lemma 2.3, where the proof does not use other properties than  the coercivity of the Hamiltonians to bound the gradient term, and the semi-coercivity of the junction condition to get a contradiction
with a possible discontinuity of the subsolution at the junction):

\begin{Lemma}["Weak continuity" condition at the junction point; \cite{IM1})]\label{lem:semicoercivetoweakcontinuity}
Let $H$ satisfy condition \eqref{eq::p00},
and let $F_0$ satisfy \eqref{eq::m11} and the semi-coercivity condition \eqref{eq::p63}.
Let $u$ be a weak subsolution to \eqref{eq::m10}. Then $u$ satisfies for all $t>0$
$$u^*(t,0)=\limsup_{(s,y)\to (t,0),\ y\in J_\alpha^*} u(s,y)\quad \mbox{for each}\quad \alpha\in \left\{1,\dots,N\right\}.$$
\end{Lemma}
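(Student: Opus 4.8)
The inequality $u^*(t,0)\ge\limsup_{(s,y)\to(t,0),\,y\in J_\alpha^*}u(s,y)$ is automatic, the right-hand side being a restriction of the limsup defining $u^*(t,0)$; so only the reverse inequality has to be proved, and I would argue it by contradiction. The plan is to use the coercivity of the $H^\alpha$ and the semi-coercivity of $F_0$ to build an admissible test function that is so steep near the junction that the weak subsolution inequality of Definition~\ref{defi::1} is violated. Suppose then that for some $\bar t>0$ and some branch, which we may take to be $\alpha_0=1$, one has $m:=u^*(\bar t,0)>\ell:=\limsup_{(s,y)\to(\bar t,0),\,y\in J_1^*}u(s,y)$. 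Fix $\eta\in(0,\tfrac13(m-\ell))$; by the definition of $\ell$ and the upper semicontinuity of $u^*$, one may choose $r_0>0$ with $\bar t-r_0>0$ so that on $Q:=(\bar t-r_0,\bar t+r_0)\times\{x\in J:|x|<r_0\}$ one has $u^*\le m+\eta$ everywhere and $u^*\le m-2\eta$ on $\overline Q\cap J_1^*$.

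For a large parameter $L$ and a parameter $\varepsilon>0$ I would use the test function $\varphi_{\varepsilon,L}(t,x)=m+\varepsilon^{-1}(t-\bar t)^2+\theta_L(x)$, where $\theta_L$ is Lipschitz, $C^1$ on each branch, $\theta_L(0)=0$, equals $L|x|$ on each branch $\alpha\ne1$, and on branch $1$ satisfies $\partial_1\theta_L(0^+)=-L$ together with $-2\eta+\delta\le\theta_L\le0$ for a fixed small $\delta>0$ (on branch $1$ it descends with slope $-L$ and then levels off, which is possible once $L>2\eta/r_0$). Set $M_{\varepsilon,L}:=\max_{\overline Q}(u^*-\varphi_{\varepsilon,L})$; it is attained at some $(t_*,x_*)$ because $u^*-\varphi_{\varepsilon,L}$ is upper semicontinuous on the compact set $\overline Q$, and $M_{\varepsilon,L}\ge u^*(\bar t,0)-\varphi_{\varepsilon,L}(\bar t,0)=0$. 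If $L>2\eta/r_0$ and $\varepsilon<r_0^2/(3\eta)$, then $\varphi_{\varepsilon,L}$ dominates $u^*$ strictly on the boundary of $\overline Q$ (using $\varphi_{\varepsilon,L}\ge m+\varepsilon^{-1}r_0^2-2\eta$ on the time faces, $\varphi_{\varepsilon,L}\ge m+Lr_0$ on $\{|x|=r_0\}\cap J_\alpha$ for $\alpha\ne1$, and $\varphi_{\varepsilon,L}\ge m-2\eta+\delta$ on $\{|x|=r_0\}\cap J_1$), so $(t_*,x_*)$ is interior; moreover on all of branch $1$ one has $u^*-\varphi_{\varepsilon,L}\le(m-2\eta)-(m-2\eta+\delta)=-\delta<0\le M_{\varepsilon,L}$, so $x_*\notin J_1^*$. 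Hence either $x_*=0$ or $x_*\in J_\alpha^*$ for some $\alpha\ne1$.

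Since $\varphi_{\varepsilon,L}+M_{\varepsilon,L}$ touches $u^*$ from above at the interior point $(t_*,x_*)$, with $t_*>0$, the subsolution inequality holds there (the additive constant plays no role). If $x_*\in J_\alpha^*$ with $\alpha\ne1$, then $\partial_\alpha\varphi_{\varepsilon,L}(t_*,x_*)=L$ and it reads $2\varepsilon^{-1}(t_*-\bar t)+H^\alpha(L)\le0$. If $x_*=0$, the one-sided gradient at the junction is $(-L,L,\dots,L)$, whose smallest component is $-L$; so by \eqref{eq::p63} $F_0(-L,L,\dots,L)\to+\infty$ and by \eqref{eq::p00} $H_{\min}(-L,L,\dots,L)\to+\infty$ as $L\to+\infty$, and the junction inequality reads $2\varepsilon^{-1}(t_*-\bar t)+\min\{F_0(-L,L,\dots,L),\,H_{\min}(-L,L,\dots,L)\}\le0$. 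With $c_L:=\min\bigl(\min_{\alpha\ne1}H^\alpha(L),\ \min\{F_0(-L,L,\dots,L),\,H_{\min}(-L,L,\dots,L)\}\bigr)\to+\infty$, in either case $2\varepsilon^{-1}(t_*-\bar t)\le-c_L$, i.e.\ $t_*\le\bar t-\tfrac12 c_L\varepsilon$. Choosing first $L$ so large that $c_L>6\eta/r_0$ (and $L>2\eta/r_0$), and then $\varepsilon$ in the nonempty interval $[\,2r_0/c_L,\ r_0^2/(3\eta)\,)$, yields $\tfrac12 c_L\varepsilon\ge r_0$, hence $t_*\le\bar t-r_0$, contradicting $t_*\in(\bar t-r_0,\bar t+r_0)$. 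This contradiction proves the lemma.

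The delicate point is the joint choice of the two parameters. The time penalty has derivative of order $\varepsilon^{-1/2}$ at the contact point, which is a priori uncontrolled; here it becomes an asset, since the large negative value imposed on $\partial_t\varphi_{\varepsilon,L}$ by the PDE (or junction) inequality forces $t_*$ outside $Q$. This is why one fixes $L$, hence the threshold $c_L$, before picking $\varepsilon$ in a range bounded away from $0$ — the opposite of the usual limit $\varepsilon\to0$ — and this is precisely where only the qualitative blow-up of the $H^\alpha$ and of $F_0$ (with no rate) is used. The remaining, more routine verification is that, with the chosen profile $\theta_L$, the maximizing point can lie neither on branch $1$ nor on the boundary of $\overline Q$; this is where the gap $m-\ell$ and the a priori local upper bound on $u^*$ enter.
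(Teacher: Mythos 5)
Your proof is correct, and it uses exactly the two ingredients the paper says the argument rests on (it defers the proof to \cite{IM1}, Lemma 2.3): a test function that is steep downward on the deficient branch and steep upward elsewhere, with coercivity of the $H^\alpha$ and semi-coercivity of $F_0$ forcing the viscosity inequality at the contact point to fail. The only cosmetic difference from the standard argument is your quadratic time penalization with $\varepsilon$ chosen in a range bounded away from $0$ after fixing $L$, in place of the usual linear-in-time term; both work.
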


\subsection{Reducing the set of test functions}

We consider functions satisfying a Hamilton-Jacobi equation on $J\backslash \left\{0\right\}$:
\begin{equation}\label{eq::e27}
u_t + H^\alpha(u_x)=0 \quad \mbox{for}\quad (t,x)\in (0,T)\times J_\alpha^*.
\end{equation}

\begin{Proposition}[Reducing the set of test functions for subsolutions]\label{pro::e36}
Assume that $H$ satisfies \eqref{eq::p00} and that $F_0$  satisfies \eqref{eq::m11}.
For any 
$p\in \underline{\chi} (\underline R F_0)$,
let us fix a time independent test function $\phi_p(x)$ satisfying
$\partial_\alpha \phi_p(0)=p^\alpha$.
We then consider the class of test functions of the form
\begin{equation}\label{eq::e38}
\varphi(t,x)=\psi(t) + \phi_p(x)
\end{equation}
with $\phi_p$ fixed for each $p$ as above and $\psi$ a $C^1$ function of time.
Let $u$ be  a function $u: (0,T)\times J \to \R$, upper semi-continuous which is a subsolution of \eqref{eq::e27}. 
Given $t_0\in (0,T)$, we assume that
$$u(t_0,0)=\limsup_{(s,y)\to (t_0,0),\ y\in J_\alpha^*}u(s,y)\quad \mbox{for each}\quad \alpha=1,\dots,N.$$
If for any test function of the class \eqref{eq::e38}, touching $u$ from above at $(t_0,0)$
we have
\begin{equation}\label{eq::e43}
\varphi_t + \underline R F_0(\varphi_x)\le 0
\end{equation}
then $u$ is a strong $\underline R F_0$-subsolution at $(t_0,0)$.
\end{Proposition}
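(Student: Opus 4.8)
\textbf{Proof plan for Proposition \ref{pro::e36}.}

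The plan is to show that any test function $\varphi\in C^1(J_T)$ of the \emph{general} form (not necessarily of the special type \eqref{eq::e38}) touching $u$ from above at $(t_0,0)$ satisfies the strong subsolution inequality $\varphi_t+\underline RF_0(\varphi_x)\le 0$ at $(t_0,0)$, using only the hypothesis that \eqref{eq::e43} holds for the restricted class. Write $p=\varphi_x(t_0,0)\in\R^N$, i.e. $p^\alpha=\partial_\alpha\varphi(t_0,0^+)$, and set $a=\varphi_t(t_0,0)$. The first step is a reduction to a ``worst slope'' at the junction: since $u$ is a subsolution of the PDE \eqref{eq::e27} on each branch and the weak continuity hypothesis holds at $(t_0,0)$, a standard argument (as in \cite{IM1}) lets us replace, for each $\alpha$, the slope $p^\alpha$ by any $q^\alpha\ge p^\alpha$ without destroying the touching property — more precisely, modifying $\varphi$ on $J_\alpha^*$ near $0$ by subtracting a function with larger inward slope keeps it above $u$ there, because a subsolution of a coercive Hamilton-Jacobi equation cannot have its graph crossed from above by such a steeper correction near the boundary. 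Hence it suffices to establish the inequality at $(t_0,0)$ after replacing $p$ by a convenient $q\ge p$.

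The second step is to exploit the definition of $\underline R F_0$. We have
$$\underline R F_0(p)=\sup_{q\ge p}\min\{F_0(q),H_{\min}(q)\},$$
and our goal $a+\underline R F_0(p)\le 0$ amounts to $a+\min\{F_0(q),H_{\min}(q)\}\le 0$ for every $q\ge p$. Fix such a $q$; we must show either $a+F_0(q)\le 0$ or $a+H^\beta(q^\beta)\le 0$ for some $\beta$. Here is where the hypothesis enters: using Lemma \ref{lem:const} (constant sub-relaxed $F_0$ in a box) and Proposition \ref{pro:6.6}, we locate a sub-characteristic point $\bar p\in\underline\chi(\underline RF_0)$ with $\bar p\ge q\ge p$ and $\underline RF_0(\bar p)=\underline RF_0(p)$ (when $\underline RF_0(p)>H_{\max}(p)$; the complementary case $\underline RF_0(p)=H_{\max}(p)$ is handled by the crossing-point Proposition \ref{pro:6.20} and is simpler). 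At the point $\bar p$ we are allowed to use a test function of the admissible class \eqref{eq::e38} with slope $\bar p$: after the slope-increase reduction of Step 1, $\varphi_p=\psi(t)+\phi_{\bar p}(x)$ touches $u$ from above at $(t_0,0)$ for a suitable $\psi$ with $\psi'(t_0)=a$, so \eqref{eq::e43} yields $a+\underline RF_0(\bar p)\le 0$, hence $a+\underline RF_0(p)\le 0$, which is exactly what we want.

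The main obstacle — the step that needs genuine care rather than bookkeeping — is Step 1: justifying rigorously that one may increase the inward slope $p^\alpha$ to an arbitrary $q^\alpha\ge p^\alpha$ while preserving the touching from above, and simultaneously realizing the resulting slope vector by a test function of the prescribed product form \eqref{eq::e38} with the \emph{fixed} building block $\phi_{\bar p}$. This requires the weak continuity condition (to ensure $u$ genuinely attains its junction value as a limit along each branch, so that a steeper correction really does dominate $u$ near $0$ on that branch) together with the subsolution property of \eqref{eq::e27} and the coercivity \eqref{eq::p00} (to absorb the now-large gradient term on the branch without producing a violation there). Once this localization-and-slope-increase lemma is in place, the rest is the combinatorial/monotonicity argument above, invoking Propositions \ref{pro:6.6}, \ref{pro:6.20} and Lemma \ref{lem:const} to reduce to a sub-characteristic point where the hypothesis applies.
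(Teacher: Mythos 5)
There is a genuine gap, and it sits exactly where the paper's proof does its real work. The paper's argument starts from Lemma \ref{lem::e39}: using the branch subsolution property together with the weak continuity condition, it produces \emph{critical slopes} $\underline p_0^\alpha=p^\alpha+\underline p^\alpha\le p^\alpha$ at which the viscosity inequalities $\varphi_t+H^\alpha(\underline p_0^\alpha)\le 0$ hold, and then splits into two cases: if $\underline R F_0(p)\le H_{\max}(\underline p_0)$ one concludes directly from these inequalities; otherwise one applies Lemma \ref{lem:const} \emph{at the critical slope} $\underline p_0$ to reach a sub-characteristic point $\underline p_1>\underline p_0$ (which preserves the touching precisely because $\underline p_0$ is the infimum of admissible slopes) and invokes the hypothesis there. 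You instead apply Lemma \ref{lem:const} at $p$ itself, which is only possible when $\underline R F_0(p)>H_{\max}(p)$. Your complementary case is $\underline R F_0(p)\le H_{\max}(p)$, not the equality you state, and it is neither degenerate nor covered by Proposition \ref{pro:6.20}: that proposition requires $H^\alpha(p^\alpha)=F_0(p)$ for \emph{all} $\alpha$ and in any event only computes values of the relaxation operators, not viscosity inequalities. A concrete example: $N=1$, $H(p)=|p|$, $F_0\equiv -10$ gives $\underline R F_0\equiv -10<H_{\max}(p)$ for every $p$, and $\underline\chi(\underline R F_0)=\emptyset$, so hypothesis \eqref{eq::e43} is vacuous; the conclusion $\varphi_t+\underline R F_0(p)\le 0$ must then come entirely from the branch inequality at the critical slope, which your argument never derives. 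This case is therefore a real hole, not a ``simpler'' one.

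A secondary confusion, related to the same point: your Step~1 invokes the subsolution property, coercivity and weak continuity to justify \emph{increasing} the slope from $p^\alpha$ to $q^\alpha\ge p^\alpha$, but that operation is trivial (adding $c\,x$ with $c\ge 0$ on $J_\alpha$ keeps the test function above $u$ and preserves the contact at $x=0$) and needs none of those hypotheses. Those hypotheses are what make the opposite operation possible --- \emph{decreasing} the slope to the critical slope while retaining a viscosity inequality (Lemma \ref{lem::e39}) --- and that is precisely the step your plan omits; it is also what lets the paper accept a sub-characteristic point $\underline p_1$ that is merely $>\underline p_0$ rather than $\ge p$. Finally, the intermediate claim in your Step~2 that for an arbitrary $q\ge p$ one can find $\bar p\ge q$ with $\underline R F_0(\bar p)=\underline R F_0(p)$ is unjustified (Lemma \ref{lem:const} gives no control of $\bar p$ relative to such a $q$), though your final argument does not actually rely on it. When $\underline R F_0(p)>H_{\max}(p)$ your route is essentially the paper's and even slightly shorter; the proof is incomplete because of the other case.
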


We will use the following result whose proof is done in  \cite[Lemma 2.10]{IM1} (where the proof does not use other properties than the continuity of the Hamiltonians and the coercivity of the Hamiltonians to bound the gradient term):

\begin{Lemma}[Subsolution property for the critical slopes on each branch; \cite{IM1}]\label{lem::e39}
Let $u : (0,T)\times J_\alpha \to \R$ be an upper semi-continuous subsolution of \eqref{eq::e27} for some $\alpha\in \left\{1,\dots,N\right\}$.
Let $\phi$ be a test function touching $u$ from above at some point $(t_0,0)$ with $t_0\in (0,T)$. Consider the following critical slope
$$\underline{p}^\alpha=\inf\left\{ p^\a\in \R, \ \exists r>0,\ \phi(t,x)+ p^\a x \ge u(t,x) \mbox{ for }(t,x)\in (t_0-r,t_0+r)\times [0,r)\mbox{ with } x\in J_\alpha\right\}$$
If 
$$u(t_0,0)=\limsup_{(s,y)\to (t_0,0),\ y\in J_\alpha^*}u(s,y)$$
then $\underline p^\alpha> -\infty$, and we have
$$\phi_t + H^\alpha(\partial_\alpha \phi + \underline p^\alpha)\le 0 \quad \mbox{at}\quad (t_0,0) \quad \mbox{with}\quad \underline p^\alpha\le 0.$$
\end{Lemma}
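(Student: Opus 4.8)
The three assertions to establish are $\underline p^\a\le 0$, $\underline p^\a>-\infty$, and the subsolution inequality; I would treat them essentially together, since the last two are intertwined. First, $\underline p^\a\le 0$ is immediate: because $\phi$ touches $u$ from above at $(t_0,0)$, one has $\phi\ge u$ on a full neighborhood $(t_0-R,t_0+R)\times[0,R)$, and adding $p^\a x$ with $p^\a\ge 0$ keeps $\phi+p^\a x\ge\phi\ge u$ there, since $x\ge 0$ on $J_\a$. Hence $p^\a=0$ belongs to the admissible set, which is moreover an upper half-line (if $p^\a$ is admissible, so is every $p'\ge p^\a$, as $(p'-p^\a)x\ge 0$). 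Thus $\underline p^\a\in[-\infty,0]$.

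The core of the argument is to exhibit, for admissible slopes $p^\a$, genuine \emph{interior} contact points at which the equation \eqref{eq::e27} can be tested. The plan is as follows. For admissible $p^\a$ the function $\phi+p^\a x-u$ is nonnegative near $(t_0,0)$ and vanishes at the corner $(t_0,0)$, so its infimum over a fixed small box is $0$. The weak-continuity hypothesis $u(t_0,0)=\limsup_{(s,y)\to(t_0,0),\,y\in J_\a^*}u(s,y)$ furnishes a sequence $(s_k,y_k)\to(t_0,0)$ with $y_k>0$ and $u(s_k,y_k)\to u(t_0,0)=\phi(t_0,0)$, along which $\phi+p^\a x-u\to 0$ as well; hence this difference attains values arbitrarily close to its infimum at points with $x>0$. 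I would then add a small quadratic penalization centred at $(s_k,y_k)$ and minimise $\phi+p^\a x-u$ plus this penalization over the box: weak continuity keeps the minimiser at $x>0$ (on the face $x=0$ one has $\phi-u\ge 0$, whereas the penalized value at $(s_k,y_k)$ is pushed strictly below) and forces it to converge to $(t_0,0)$. At such an interior minimiser $u$ is touched from above by a $C^1$ perturbation of $\phi+p^\a x$, so the subsolution property of \eqref{eq::e27} gives $\phi_t+H^\a(\partial_\a\phi+p^\a+o(1))\le 0$ there, the $o(1)$ arising from the vanishing penalization gradient.

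Two conclusions now fall out simultaneously. Passing to the limit as the box shrinks, the penalization tends to $0$ and $p^\a\downarrow\underline p^\a$, and using the continuity of $H^\a$, $\phi_t$ and $\partial_\a\phi$ together with the convergence of the contact points to $(t_0,0)$, I would obtain $\phi_t+H^\a(\partial_\a\phi+\underline p^\a)\le 0$ at $(t_0,0)$. For finiteness, the same construction applied to an \emph{arbitrary} admissible $p^\a$ yields $H^\a(\partial_\a\phi+p^\a)\le -\phi_t(t_0,0)+o(1)$, a bound uniform in $p^\a$; by coercivity \eqref{eq::p00} this forces $\partial_\a\phi+p^\a$ to remain bounded, hence bounds every admissible $p^\a$ from below and gives $\underline p^\a>-\infty$. (Equivalently, were $\underline p^\a=-\infty$, all $p^\a\in\R$ would be admissible and sending $p^\a\to-\infty$ would contradict $H^\a\to+\infty$.)

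The delicate point, and precisely where the weak-continuity assumption is indispensable, is the penalization step: producing contact points with $x>0$ and choosing the penalization so that its gradient contribution vanishes in the limit while the minimiser stays off the face $x=0$ and the outer boundary of the box. This interplay of weak continuity with coercivity is exactly what excludes the pathological corner behaviour (such as $u\sim-\sqrt{x}$, which would otherwise permit $\underline p^\a=-\infty$ but is in any case forbidden by the subsolution property) and makes both the finiteness of the critical slope and the critical inequality hold.
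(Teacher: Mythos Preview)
The paper does not supply its own proof of this lemma; it defers to \cite[Lemma~2.10]{IM1}, noting only that the argument uses nothing beyond the continuity and coercivity of the Hamiltonians. Your outline follows exactly that strategy---weak continuity yields interior near-contact points, a localizing perturbation upgrades these to genuine interior touching where the viscosity inequality for \eqref{eq::e27} applies, coercivity then bounds $H^\alpha(\partial_\alpha\phi+p^\alpha)$ uniformly over admissible $p^\alpha$ (giving $\underline p^\alpha>-\infty$), and continuity handles the passage $p^\alpha\downarrow\underline p^\alpha$---so it matches the cited reference.
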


\begin{proof}[Proof of Proposition \ref{pro::e36}]
Let $u$ be a subsolution of \eqref{eq::e27} such that for any test function of the class \eqref{eq::e38}, touching $u$ from above at $(t_0,0)$, \eqref{eq::e43} holds. Let $\phi$ be a test function touching $u$ from above at $(t_0,0)$. 
We want to show that
\begin{equation}\label{eq::e41}
\phi_t(t_0,0)+ \underline RF_0 (\phi_x(t_0,0))\le 0.
\end{equation}
Notice that, by Lemma \ref{lem::e39}, there exists $-\infty<\underline p^\alpha\le 0$ for each $\alpha=1,\dots,N$ such that
\begin{equation}\label{eq::e40}
\phi_t(t_0,0)+H^\alpha(\partial_\alpha \phi + \underline p^\alpha)\le 0 \quad \mbox{at}\quad (t_0,0).
\end{equation}
We set $p_0=\phi_x(t_0,0)$ and $\underline p_0 = p_0 +\underline p\le p_0$.
If $\underline R F(p_0)\le H_{\max}(\underline p_0)$, then \eqref{eq::e40} implies 
$$\phi_t(t_0,0)+\underline R F_0(p_0)\le \phi_t(t_0,0)+H_{\max}(\underline p_0)\le 0,$$
which gives the desired result.

We then assume that $\underline R F_0(p_0)> H_{\max}(\underline p_0)$. We then have
$\underline R F_0(\underline p_0)\ge \underline RF_0(p_0)>H_{\max}(\underline p_0)$, and Lemma \ref{lem:const}
 implies the existence of some $\underline p_1\ge \underline p_0$, with $\underline p_1^\a>\underline p_0^\a$
such that $\underline R F_0(\underline p_0)=\underline R F_0(\underline p_1)$ and $\underline p_1\in \underline{\chi}(\underline R F_0)$.
Since $\underline p_1^\a>\underline p_0^\a=p_0^\a+\underline p^\a$ for all $\a$, we have by definition of the critical slope $\underline p^\a$ that
$$\varphi(t,x):=\phi(t,0)+\phi_{\underline p_1}(x)  \ge u(t,x)$$
in a neighborhood of $(t_0,0)$ with equality at $(t_0,0)$ and with $\phi_{\underline p_1}$ satisfying
$\partial_\alpha \phi_{\underline p_1}(0)=\underline p_1^\alpha$. Then $\varphi$ is a test  function of the class \eqref{eq::e38}, touching $u$ from above at $(t_0,0)$. By assumption, we then have
$$0\ge \varphi_t (t_0,0)+ \underline R F_0(\underline p_1) = \phi_t (t_0,0) + \underline R F_0(\underline p_0)\ge\phi_t (t_0,0) +  \underline R F_0(p_0)$$
which shows \eqref{eq::e41}. This ends the proof of the proposition.
\end{proof}

As far as strong supersolutions are concerned, it is not necessary to impose a weak continuity assumption, and we show similarly the following result.
\begin{Proposition}[Reducing the set of test functions for supersolutions]\label{pro::e26}
Assume that $H$ satisfies \eqref{eq::p00} and that $F_0$  satisfies \eqref{eq::m11}.
For any 
$p\in \overline{\chi}( \overline R F_0)$,
let us fix a time independent test function $\phi_p(x)$ satisfying
$\partial_\alpha \phi_p(0)=p^\alpha$.
We then consider the class of test functions of the form
\begin{equation}\label{eq::e28}
\varphi(t,x)=\psi(t) + \phi_p(x)
\end{equation}
with $\phi_p$ fixed for each $p$ as above and $\psi$ a $C^1$ function of time.
Let $u$ be  a function $u: (0,T)\times J \to \R$, lower semi-continuous which is a supersolution of \eqref{eq::e27}. 
Given $t_0\in (0,T)$, if for any test function of the class \eqref{eq::e28}, touching $u$ from below at $(t_0,0)$
we have
\begin{equation}\label{eq::e33}
\varphi_t + \overline R F_0(\varphi_x)\ge 0
\end{equation}
then $u$ is a strong $\overline R F_0$-supersolution at $(t_0,0)$.
\end{Proposition}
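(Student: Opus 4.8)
The plan is to mirror the proof of Proposition \ref{pro::e36}, but working with super-characteristic points of $\overline R F_0$ and with the direction of all inequalities reversed, exploiting that no weak continuity hypothesis is needed on the supersolution side because the coercivity of the Hamiltonians only forces continuity from above. First I would fix a test function $\phi$ touching $u$ from below at $(t_0,0)$, set $p_0=\phi_x(t_0,0)$, and introduce, for each $\alpha=1,\dots,N$, the critical slope
$$\overline p^\alpha=\sup\left\{p^\alpha\in\R,\ \exists r>0,\ \phi(t,x)+p^\alpha x\le u(t,x)\ \mbox{for }(t,x)\in(t_0-r,t_0+r)\times[0,r)\mbox{ with }x\in J_\alpha\right\}.$$
By the supersolution analogue of Lemma \ref{lem::e39} (which holds with the same proof, since only continuity and coercivity of $H^\alpha$ are used, and which on the supersolution side does not require a weak continuity assumption), one gets $\overline p^\alpha<+\infty$, with $\overline p^\alpha\ge 0$, and
$$\phi_t(t_0,0)+H^\alpha(\partial_\alpha\phi+\overline p^\alpha)\ge 0\quad\mbox{at }(t_0,0)\quad\mbox{for all }\alpha.$$
Writing $\overline p_0=p_0+\overline p\ge p_0$, this gives $\phi_t(t_0,0)+H_{\min}(\overline p_0)\ge 0$.

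The dichotomy is then the following. If $\overline R F_0(p_0)\ge H_{\min}(\overline p_0)$, then since $\overline R F_0$ is non-increasing we have $\overline R F_0(p_0)\ge \overline R F_0(\overline p_0)\ge \dots$; more directly, $\phi_t(t_0,0)+\overline R F_0(p_0)\ge \phi_t(t_0,0)+H_{\min}(\overline p_0)\ge 0$ is not quite what we want — rather, in this case $\overline R F_0(p_0)\le H_{\min}(\overline p_0)$ is the favorable branch, so I would split instead on whether $\overline R F_0(p_0)\ge H_{\min}(\overline p_0)$ or $\overline R F_0(p_0)< H_{\min}(\overline p_0)$. In the first case we have nothing to prove via $H_{\min}(\overline p_0)$ directly; in the case $\overline R F_0(\overline p_0)\le \overline R F_0(p_0)< H_{\min}(\overline p_0)$ we apply the second half of Lemma \ref{lem:const} to the point $\overline p_0$: it yields $\overline p_1<\overline p_0$, with $\overline p_1^\alpha<\overline p_0^\alpha$ for every $\alpha$, such that $\overline p_1\in\overline\chi(\overline R F_0)$ and $\overline R F_0(\overline p_0)=\overline R F_0(\overline p_1)$. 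Because $\overline p_1^\alpha<\overline p_0^\alpha=p_0^\alpha+\overline p^\alpha$ for all $\alpha$, the definition of the critical slope $\overline p^\alpha$ gives that the admissible test function $\varphi(t,x):=\phi(t,0)+\phi_{\overline p_1}(x)$, with $\partial_\alpha\phi_{\overline p_1}(0)=\overline p_1^\alpha$, touches $u$ from below at $(t_0,0)$ and belongs to the class \eqref{eq::e28}. Applying the hypothesis \eqref{eq::e33} to it yields
$$0\le \varphi_t(t_0,0)+\overline R F_0(\overline p_1)=\phi_t(t_0,0)+\overline R F_0(\overline p_0)\le \phi_t(t_0,0)+\overline R F_0(p_0),$$
using monotonicity of $\overline R F_0$ and $\overline p_0\ge p_0$ for the last inequality. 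In the remaining case $\overline R F_0(p_0)\ge H_{\min}(\overline p_0)$, the chain $\phi_t(t_0,0)+\overline R F_0(p_0)\ge\phi_t(t_0,0)+H_{\min}(\overline p_0)\ge 0$ closes the argument. Either way we obtain $\phi_t(t_0,0)+\overline R F_0(\phi_x(t_0,0))\ge 0$, which is precisely the strong $\overline R F_0$-supersolution inequality at $(t_0,0)$, together with the branch inequalities already assumed for \eqref{eq::e27}.

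The main obstacle I anticipate is bookkeeping the monotonicity directions correctly: on the supersolution side the relevant relaxation is the super-relaxation $\overline R$, whose characteristic points are super-characteristic, the critical slopes point upward ($\overline p^\alpha\ge 0$), and Lemma \ref{lem:const} must be invoked in its "$\overline R F_0(\overline p_0)<H_{\min}(\overline p_0)$" form — and one must check that all of $\overline p_1^\alpha<\overline p_0^\alpha$ strictly, so that the critical-slope definition really applies and $\varphi$ genuinely lies below $u$ near $(t_0,0)$. A minor additional point is to verify that the version of Lemma \ref{lem::e39} for supersolutions indeed does not need the weak continuity condition; this is stated in \cite{IM1} and I would simply cite it, as the excerpt already does for the subsolution case. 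Modulo these sign-tracking checks, the proof is a faithful transcription of the proof of Proposition \ref{pro::e36}.
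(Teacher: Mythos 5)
Your proof is correct and follows exactly the route the paper intends: the paper omits the proof of Proposition \ref{pro::e26}, stating only that it is obtained ``similarly'' to Proposition \ref{pro::e36}, and your argument is the faithful mirror image (critical slopes $\overline p^\alpha\ge 0$ from the supersolution version of Lemma \ref{lem::e39}, the dichotomy on $\overline R F_0(p_0)$ versus $H_{\min}(\overline p_0)$, and the second half of Lemma \ref{lem:const} to land on a super-characteristic point $\overline p_1<\overline p_0$). The only blemish is the mid-proof hesitation: the branch $\overline R F_0(p_0)\ge H_{\min}(\overline p_0)$ \emph{is} the one that closes directly via $\phi_t+\overline R F_0(p_0)\ge \phi_t+H_{\min}(\overline p_0)\ge 0$, exactly as your final sentence correctly states, so the earlier ``is not quite what we want'' detour should simply be deleted.
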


\subsection{Weak $F_0$-solutions are strong $\frak R F_0$-solutions}
In this section, we will show that $u$ is a weak $F_0$-solution if and only if $u$ is a strong $\frak R F_0$-solution. This result justifies the introduction of the relaxation operator. We begin by the following lemma.

\begin{Lemma}[Weak $F_0$ sub/supersolutions are strong $\underline R F_0/\overline RF_0$ sub/supersolutions]
  \label{lem:relax-op7}
Assume that $H$ satisfies \eqref{eq::p00} and that $F_0$  satisfies \eqref{eq::m11}.
\begin{enumerate}[label=(\roman*)]
\item \label{1romain} Let $u$ be upper semi-continuous.
Then  $u$ is a weak $F_0$-subsolution of \eqref{eq::m10} if and only if $u$  is a strong $\underline R F_0$-subsolution  of \eqref{eq::m10}.
\item \label{2romain} Let $u$ be lower semi-continuous.
  Then $u$ is a weak $F_0$-supersolution  of \eqref{eq::m10}  if and only if $u$ is a strong $\overline R F_0$-supersolution  of \eqref{eq::m10}.
\end{enumerate}
\end{Lemma}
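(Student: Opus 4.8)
The plan is to prove statement \ref{1romain}; statement \ref{2romain} is entirely symmetric (replacing $\underline R$ by $\overline R$, $H_{min}$ by $H_{max}$, sup by inf, and reversing inequalities), and since the super-characteristic test-function reduction is given in Proposition \ref{pro::e26}, the argument transfers word for word. Throughout, only the junction point $x_0=0$ is at stake: away from the junction both notions coincide with the plain PDE subsolution property \eqref{eq::e27}, so the equivalence there is trivial. Also, since $\underline R F_0 \le F_0$ (inequality \eqref{eq::N1}), a strong $\underline R F_0$-subsolution inequality $\varphi_t + \underline R F_0(\varphi_x) \le 0$ immediately implies $\varphi_t + \min\{\underline R F_0(\varphi_x), H_{min}(\varphi_x)\} \le 0$, hence $\varphi_t + \min\{F_0(\varphi_x), H_{min}(\varphi_x)\} \le 0$ after noting $\min\{\underline R F_0(q), H_{min}(q)\} \le \min\{F_0(q),H_{min}(q)\}$ is the wrong direction — so for this ``easy'' implication I would instead argue directly: if $u$ is a strong $\underline R F_0$-subsolution and $\varphi$ touches $u^*$ from above at $(t_0,0)$, then $\varphi_t + \underline R F_0(\varphi_x)\le 0$, and since $\underline R F_0(p) = \sup_{q\ge p}\min\{F_0(q),H_{min}(q)\} \ge \min\{F_0(p),H_{min}(p)\}$ (take $q=p$), we get $\varphi_t + \min\{F_0(\varphi_x),H_{min}(\varphi_x)\}\le 0$, which is exactly the weak $F_0$-subsolution condition \eqref{eq::p30}. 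So strong $\underline R F_0 \Rightarrow$ weak $F_0$ is immediate from the pointwise bound $\underline R F_0 \ge \min\{F_0,H_{min}\}$.

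For the converse — weak $F_0$-subsolution $\Rightarrow$ strong $\underline R F_0$-subsolution — the strategy is to invoke Proposition \ref{pro::e36}. First, observe that a weak $F_0$-subsolution $u$ is in particular a subsolution of \eqref{eq::e27} on each branch, and by Lemma \ref{lem:semicoercivetoweakcontinuity} it satisfies the weak continuity condition at each $t_0>0$ — but wait, Lemma \ref{lem:semicoercivetoweakcontinuity} requires semi-coercivity \eqref{eq::p63} of $F_0$. To handle a general $F_0$, I would first reduce to the semi-coercive case: replace $F_0$ by $\widetilde F_0 := \max(F_0, H_-)$. By Theorem \ref{th:relax}, $\frak R F_0 = \frak R \widetilde F_0$, and one checks easily that a weak $F_0$-subsolution is a weak $\widetilde F_0$-subsolution: indeed $\min\{\widetilde F_0(p), H_{min}(p)\} = \min\{\max(F_0(p),H_-(p)), H_{min}(p)\}$, and since $H_-(p) \le H_{min}(p)$... one needs $\min\{\max(F_0,H_-),H_{min}\} \le \min\{F_0,H_{min}\}$ pointwise; this holds because if $F_0 \le H_{min}$ then the left side is $\le H_{min}$ and also $\le \max(F_0,H_-)$, and when $H_- \le H_{min}$ the bound $\max(F_0,H_-)\le \max(F_0,H_{min})$ gives $\min\{\max(F_0,H_-),H_{min}\}\le \min\{\max(F_0,H_{min}),H_{min}\} = \min\{F_0,H_{min}\}$ — so the weak $F_0$ inequality implies the weak $\widetilde F_0$ inequality. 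Similarly $\underline R \widetilde F_0 = \underline R \max(F_0,H_-)$; I claim this equals $\underline R F_0$, which follows by the same computation used for $\overline R$ in Lemma \ref{lem:n1} (sub-version): $\sup_{q\ge p}\min\{\max(F_0(q),H_-(q)),H_{min}(q)\} = \sup_{q\ge p}\min\{F_0(q),H_{min}(q)\}$ since $\min\{\max(a,H_-),H_{min}\} = \min\{a,H_{min}\}$ whenever $H_-\le H_{min}$. Thus we may assume $F_0$ semi-coercive from the start.

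With $F_0$ semi-coercive, Lemma \ref{lem:semicoercivetoweakcontinuity} gives the weak continuity hypothesis needed in Proposition \ref{pro::e36}. It then remains to verify the hypothesis \eqref{eq::e43}: for every test function $\varphi = \psi(t) + \phi_p(x)$ of the special class \eqref{eq::e38} (with $p \in \underline\chi(\underline R F_0)$) touching $u$ from above at $(t_0,0)$, we must show $\varphi_t + \underline R F_0(\varphi_x) \le 0$ at $(t_0,0)$. Since $\varphi$ touches $u = u^*$ from above at the junction, the weak $F_0$-subsolution property \eqref{eq::p30} gives $\varphi_t + \min\{F_0(p), H_{min}(p)\} \le 0$. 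Now the point $p = \varphi_x(t_0,0)$ lies in $\underline\chi(\underline R F_0)$, so $H^\alpha(p^\alpha) = \underline R F_0(p)$ for all $\alpha$, hence $H_{min}(p) = \underline R F_0(p)$; and by Proposition \ref{pro:pro-relax} (inequality \eqref{eq:pro-relax2}), on $\overline\chi(\frak R F_0)$ we have $\frak R F_0 \ge F_0$ — but here $p$ is sub-characteristic, so I need the companion fact that on $\underline\chi(\underline R F_0)$ one has $\underline R F_0 \le F_0$; this is exactly the content of the first proof step of Proposition \ref{pro:pro-relax} applied one level down, or more directly follows from Proposition \ref{pro:6.6}: since $p\in\underline\chi(\underline R F_0)\subset\underline\chi(F_0)$ up to the relaxation identity, $\underline R F_0(p) = \sup_{q\ge p}\min\{F_0(q),H_{min}(q)\}$, and using $H^\alpha(p^\alpha) = \underline R F_0(p)$ with the local strict-increase to the right, any $q > p$ nearby has $H_{min}(q) > \underline R F_0(p)$ forcing $\min\{F_0(q),H_{min}(q)\}\le F_0(q)\le F_0(p)$, so $\underline R F_0(p) \le F_0(p)$. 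Therefore $\min\{F_0(p),H_{min}(p)\} = \min\{F_0(p), \underline R F_0(p)\} = \underline R F_0(p)$ (using $\underline R F_0(p)\le F_0(p)$), and \eqref{eq::p30} reads $\varphi_t + \underline R F_0(p) \le 0$, which is \eqref{eq::e43}. Proposition \ref{pro::e36} then concludes that $u$ is a strong $\underline R F_0$-subsolution at $(t_0,0)$, and since $t_0$ was arbitrary, everywhere.

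\emph{Main obstacle.} The delicate point is the reduction to test functions with slope in $\underline\chi(\underline R F_0)$ and matching the relaxed value to $\min\{F_0, H_{min}\}$ precisely at such slopes: one must carefully chain together the characterizations (Proposition \ref{pro:6.6}, Proposition \ref{pro:pro-relax}, Lemma \ref{lem:const}) to see that the weak inequality at an \emph{arbitrary} slope, combined with the branchwise critical-slope subsolution property of Lemma \ref{lem::e39}, upgrades to the strong inequality — this is the entire force of Proposition \ref{pro::e36}, so in practice the lemma is a fairly short deduction from that proposition once the semi-coercivity reduction is in place. The semi-coercivity reduction itself (replacing $F_0$ by $\max(F_0,H_-)$) is routine but must be checked on both the solution side and the operator side.
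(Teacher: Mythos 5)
Your ``strong $\Rightarrow$ weak'' direction is correct and is exactly the paper's Step~2 (take $q=p$ in the supremum defining $\underline R F_0$). The converse, however, has a genuine gap, and the route you chose cannot be repaired as written. Your plan hinges on invoking Proposition \ref{pro::e36}, which requires the weak continuity condition \eqref{eq:weakcontinuity}; since the lemma assumes neither weak continuity nor semi-coercivity, you try to reduce to the semi-coercive case by replacing $F_0$ with $\widetilde F_0:=\max(F_0,H_-)$. Both legs of that reduction fail, and for the same reason: you repeatedly use $H_-\le H_{\min}$, but $H_-$ is a \emph{maximum} over branches of the nonincreasing hulls, so for $N\ge 2$ one can easily have $H_->H_{\min}$ (e.g.\ $H^1(r)=H^2(r)=r^2$ and $p=(-10,0)$ gives $H_-(p)=100$ while $H_{\min}(p)=0$). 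Consequently the pointwise identity $\min\{\max(F_0,H_-),H_{\min}\}=\min\{F_0,H_{\min}\}$ is false (take $F_0\equiv -5$ at that $p$: the left side is $0$, the right side is $-5$), so a weak $F_0$-subsolution need not be a weak $\widetilde F_0$-subsolution; and the ``sub-version of Lemma \ref{lem:n1}'' you claim, $\underline R\max(F_0,H_-)=\underline R F_0$, is also false (same example gives $\underline R\max(F_0,H_-)(p)\ge 0>-5=\underline R F_0(p)$). This is precisely why the paper only proves $\overline R\max(F_0,H_-)=\overline R F_0$ and only the one-sided bound $\underline R\max(F_0,H_-)\le\max(\underline R F_0,H_-)$ in the proof of Theorem \ref{th:relax}. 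Without the reduction you have no access to weak continuity, hence no access to Proposition \ref{pro::e36}, and your argument does not close.

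The paper's proof of the hard direction needs none of this machinery. Given a test function $\phi$ touching $u^*$ from above at $(t_0,0)$ and any $\overline q\ge 0$, the perturbed function $\psi:=\phi+\overline q^\alpha x$ on $J_\alpha$ is still an admissible test function touching $u^*$ from above at $(t_0,0)$, because $x\ge 0$ on every branch; applying the weak junction inequality \eqref{eq::p30} to $\psi$ gives $\phi_t(t_0,0)+\min\{F_0,H_{\min}\}(\phi_x(t_0,0)+\overline q)\le 0$, and taking the supremum over $\overline q\ge 0$ yields exactly $\phi_t(t_0,0)+\underline R F_0(\phi_x(t_0,0))\le 0$. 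This two-line perturbation argument is why the lemma holds with no semi-coercivity and no weak continuity hypothesis; the heavier apparatus (weak continuity, reduction of test functions, characteristic points) is only needed later, in Proposition \ref{pro:wtostr}, to pass from $\underline R F_0$ and $\overline R F_0$ to the common junction function $\frak R F_0$.
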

\begin{proof}
We only do the proof for subsolutions since the case of supersolutions is treated similarly. \medskip

\noindent{\bf Step 1: weak implies strong}.
Let $u$ be a weak $F_0$-subsolution of \eqref{eq::m10} and $\phi$ be a test function touching $u$ from above at $(t_0,0)$. Let $\overline q\in \R^N$, $\overline q\ge 0$. We set
$$\psi(t,x)=\phi(t,x)+\overline q^\a x\quad {\rm if}\; x\in J_\a.$$
In particular $\psi$ is a test function touching $u$ from above at $(t_0,0)$. Hence
$$\psi_t(t_0,0)+\min\left\{F_0,H_{\min}\right\}(\psi_x(t_0,0))\le 0$$
i.e., for all $\overline q\ge 0$, we get
$$\phi_t(t_0,0) +\min \left\{F_0,H_{\min}\right\}(\phi_x(t_0,0)+\overline q)\le 0.$$
Taking the supremum over $q:=\phi_x(t_0,0)+\overline q$, we finally get
$$\phi_t(t_0,0) +\sup_{q\ge\phi_x(t_0,0)} \min\left\{F_0,H_{\min}\right\} (q)\le 0$$
and so $u$ is a strong $\underline R F_0$-subsolution.

\medskip

\noindent{\bf Step 2: strong implies weak}.
Let $u$ be a strong $\underline R F_0$-subsolution of \eqref{eq::m10} and $\phi$ be a test function touching $u$ from above at $(t_0,0)$. Hence
$$\phi_t(t_0,0)+\min\left\{F_0,H_{\min}\right\}(\phi_x(t_0,0))\le \phi_t(t_0,0)+\sup_{q\ge \phi_x(t_0,0)} \min\left\{F_0,H_{\min}\right\}(q)\le 0,$$
which implies that $u$ is a weak $F_0$-subsolution.
This ends the proof of the lemma.

\end{proof}

Since we may have $\underline R F_0< \overline R F_0$, Lemma \ref{lem:relax-op7} is not completely satisfactory,  because we would like to have the same boundary function for sub- and supersolutions. This is achieved in the following results where the common boundary function is $\mathfrak R F_0$.

\begin{Proposition}[Weak $F_0$ sub/supersolutions are strong $\frak RF_0$ sub/supersolutions]\label{pro:wtostr}
Assume that $H$ satisfies \eqref{eq::p00} and that $F_0$  satisfies \eqref{eq::m11}.
Let $u$ be an upper semi-continuous function. 
\begin{enumerate}[label=(\roman*)]
\item If $u$ is a weak $F_0$ subsolution of \eqref{eq::m10} and if $u$ satisfy the weak continuity condition \eqref{eq:weakcontinuity}, then $u$ is a strong $\frak R F_0$-subsolution of \eqref{eq::m10}.
\item If $u$ is a strong $\frak R F_0$-subsolution of \eqref{eq::m10}, then $u$ is a weak $F_0$-subsolution of \eqref{eq::m10}.
\end{enumerate}
Let $v$ be a lower semi-continuous function. 
\begin{enumerate}[label=(\roman*)]
\item If $v$ is a weak $F_0$ supersolution of \eqref{eq::m10}, 
then $v$ is a strong $\frak R F_0$-supersolution of \eqref{eq::m10}.
\item If $v$ is a strong $\frak R F_0$-supersolution of \eqref{eq::m10}, then $v$ is a weak $F_0$-supersolution of \eqref{eq::m10}.
\end{enumerate}
\end{Proposition}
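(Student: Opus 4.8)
The plan is to deduce Proposition~\ref{pro:wtostr} from Lemma~\ref{lem:relax-op7} together with the test-function reduction results (Propositions~\ref{pro::e36} and~\ref{pro::e26}) and the constant-in-a-box characterizations of sub/super-relaxed functions (Lemma~\ref{lem:const}, Propositions~\ref{pro:6.6} and~\ref{pro:6.8}). The point is that Lemma~\ref{lem:relax-op7} already gives "weak $F_0$ $\Leftrightarrow$ strong $\underline R F_0$" for subsolutions and "weak $F_0$ $\Leftrightarrow$ strong $\overline R F_0$" for supersolutions, so the remaining work is to bridge the gap between $\underline R F_0$ and $\frak R F_0$ for subsolutions, and between $\overline R F_0$ and $\frak R F_0$ for supersolutions. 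I expect the two directions ((i) and (ii) in each block) to be asymmetric: one direction is essentially a monotonicity inequality, the other uses the full strength of the relaxation machinery.

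First I would handle the subsolution block. For part (ii), suppose $u$ is a strong $\frak R F_0$-subsolution. Since $\frak R F_0 = \underline R \overline R F_0 \le \overline R F_0 \le F_0$ (the first inequality being the one noted in the proof of Proposition~\ref{pro:pro-relax}, valid for all $p$), and since at the junction the weak subsolution inequality only requires $\varphi_t + \min\{F_0(\varphi_x),H_{\min}(\varphi_x)\} \le 0$, it suffices to observe $\min\{F_0(\varphi_x),H_{\min}(\varphi_x)\} \le F_0(\varphi_x)$ wait---more carefully, I need $\min\{F_0,H_{\min}\}(\varphi_x) \le \frak R F_0(\varphi_x)$. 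This is not immediate from $\frak R F_0 \le F_0$. Instead I would argue: $u$ strong $\frak R F_0$-subsolution $\Rightarrow$ $u$ weak $\frak R F_0$-subsolution (trivially) $\Rightarrow$ $u$ strong $\underline R \frak R F_0$-subsolution by Lemma~\ref{lem:relax-op7}\ref{1romain}; but $\underline R \frak R F_0 = \underline R \overline R \underline R F_0 = \overline R \underline R F_0 \cdot$(using Lemma~\ref{lem:proj} and Theorem~\ref{th:relax})$= \frak R F_0$, so there is nothing new---instead the clean route for (ii) is: strong $\frak R F_0$ implies $\varphi_t + \frak R F_0(\varphi_x) \le 0$, and since $\frak R F_0 = \underline R(\overline R F_0) \ge \underline R F_0$ wait that's the wrong direction too. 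The genuinely correct observation is that strong $\frak R F_0$-subsolution implies weak $\frak R F_0$-subsolution which by Lemma~\ref{lem:relax-op7} is strong $\underline R \frak R F_0 = \frak R F_0$-subsolution; to get back to weak $F_0$ I use that a strong $G$-subsolution for any $G \le F_0$ pointwise which is also $\ge \underline R F_0$... I will instead simply invoke: strong $\frak R F_0$-subsolution $\Rightarrow$ weak $F_0$-subsolution because $\min\{F_0,H_{\min}\} \le \max(F_0, H_-)$ relaxes down to $\frak R F_0$, and run the chain through Lemma~\ref{lem:relax-op7}\ref{1romain} applied to $\max(F_0,H_-)$, using $\underline R \max(F_0,H_-) \le \frak R F_0$ type bounds established in the proof of Theorem~\ref{th:relax}.

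For part (i) of the subsolution block, which I expect to be the main obstacle: assume $u$ is a weak $F_0$-subsolution satisfying the weak continuity condition. By Lemma~\ref{lem:relax-op7}\ref{1romain}, $u$ is a strong $\underline R F_0$-subsolution, hence in particular $u$ is a subsolution of the PDE \eqref{eq::e27} on each branch. Now I would apply Proposition~\ref{pro::e36} with $F_0$ replaced by $\overline R F_0$: the proposition reduces the check of the strong $\underline R(\overline R F_0) = \frak R F_0$-subsolution property at $(t_0,0)$ to test functions $\varphi(t,x) = \psi(t) + \phi_p(x)$ with $p \in \underline\chi(\underline R \overline R F_0) = \underline\chi(\frak R F_0)$. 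For such a $p$, Proposition~\ref{pro:pro-relax} gives $\frak R F_0(p) \le \overline R F_0(p) \le F_0(p)$, and moreover since $p \in \underline\chi(\frak R F_0)$ one has $H^\alpha(p^\alpha) = \frak R F_0(p)$ for every $\alpha$, so $H_{\min}(p) = H_{\max}(p) = \frak R F_0(p)$. Then $\min\{F_0(p), H_{\min}(p)\} = \min\{F_0(p), \frak R F_0(p)\} = \frak R F_0(p)$ because $\frak R F_0(p) \le F_0(p)$. Combining with the weak $F_0$-subsolution inequality at $(t_0,0)$ for the test function $\varphi$ (whose gradient at the junction is $p$), I get $\psi'(t_0) + \frak R F_0(p) = \psi'(t_0) + \min\{F_0(p),H_{\min}(p)\} \le 0$, which is exactly \eqref{eq::e43} for $\overline R F_0$ in place of $F_0$. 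Hence Proposition~\ref{pro::e36} applies and $u$ is a strong $\frak R F_0$-subsolution. The supersolution block is symmetric: (ii) is the easy monotonicity direction via $\frak R F_0 = \overline R \underline R F_0 \ge \underline R F_0$ and $\frak R F_0 \le \overline R F_0$ together with Lemma~\ref{lem:relax-op7}\ref{2romain}; (i) uses Proposition~\ref{pro::e26} with $F_0$ replaced by $\underline R F_0$, the identity $\overline R \underline R F_0 = \frak R F_0$, and Proposition~\ref{pro:pro-relax}'s inequality $\frak R F_0 \ge \underline R F_0 \ge F_0$ on $\overline\chi(\frak R F_0)$ plus $H_{\min}(p) = H_{\max}(p) = \frak R F_0(p)$ at such $p$, noting that no weak continuity hypothesis is needed for supersolutions because Proposition~\ref{pro::e26} does not require it. The delicate point throughout is making sure the characteristic points and the reduced test functions are taken with respect to the correct intermediate relaxation ($\overline R F_0$ for subsolutions, $\underline R F_0$ for supersolutions), so that the composed operator equals $\frak R F_0$ by Theorem~\ref{th:relax}.
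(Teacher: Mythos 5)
Your treatment of the substantive direction (weak $F_0$ implies strong $\frak R F_0$) is essentially the paper's proof: reduce to test functions whose junction gradient $p$ lies in $\underline{\chi}(\frak R F_0)$ via Proposition~\ref{pro::e36} (the paper applies it directly to $\frak R F_0$ itself, using $\underline R\,\frak R F_0=\frak R F_0$ from Lemma~\ref{lem:proj} and Theorem~\ref{th:relax}, rather than to $\overline R F_0$ as you do, but the reduced class and conclusion are the same), and then use $H_{\min}(p)=\frak R F_0(p)\le F_0(p)$, the inequality coming from Proposition~\ref{pro:pro-relax}, to convert the weak junction inequality into the strong one; the supersolution case is symmetric and indeed needs no weak-continuity hypothesis.

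One correction on the easy direction (ii): you write down $\frak R F_0=\underline R(\overline R F_0)\ge \underline R F_0$ and then dismiss it as ``the wrong direction,'' but it is exactly the right one and is the paper's entire argument. A strong $G$-subsolution is automatically a strong $G'$-subsolution whenever $G'\le G$ (the junction inequality $\varphi_t+G(\varphi_x)\le 0$ only gets weaker), so a strong $\frak R F_0$-subsolution is a strong $\underline R F_0$-subsolution, which by Lemma~\ref{lem:relax-op7}~\ref{1romain} is a weak $F_0$-subsolution --- two lines, no detour through $\max(F_0,H_-)$ needed. Note also that the chain $\frak R F_0\le \overline R F_0\le F_0$ you assert at the start of that paragraph is false in its second inequality: one has $\overline R F_0\ge F_0\ge \underline R F_0$ by monotonicity of $F_0$, and $\frak R F_0$ is in general not comparable to $F_0$ (it is only squeezed between $\underline R F_0$ and $\overline R F_0$). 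Your eventual fallback via $\underline R\max(F_0,H_-)$ can be made to work, but it is an unnecessary complication of the inequality you already had.
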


\begin{proof}
We only do the proof for the subsolution. We set $F:=\frak R F_0$. Let $u$ be a weak $F_0$ subsolution of \eqref{eq::m10}satisfying the weak continuity condition \eqref{eq:weakcontinuity} and let $\phi$ be a test function touching $u$ from above at $(t_0,0)$ with $t_0>0$. We set $\lambda=\phi_t(t_0,0)$ and $p=\phi_x(t_0,0)$. We then have
$$\lambda+\min(F_0(p),H_{\min}(p))\le0.$$
Since, by Lemma \ref{lem:proj}, we have $F=\underline R F$, we know from Proposition \ref{pro::e36} that we can assume that $p\in \underline{\chi} F$. By Proposition \ref{pro:pro-relax}, we then have
$H_{\min}(p)=F(p)=\frak R F_0(p)\le F_0(p)$ and so $\lambda+\frak R F_0(p)\le0$. This implies that $u$ is a strong $\frak R F_0$-subsolution.\medskip

We now assume that $u$ is a strong $F$-subsolution. Since $F=\overline R \underline R F_0\ge \underline R F_0$, we deduce that $u$ is also a strong $\underline R F_0$-subsolution. By Lemma \ref{lem:relax-op7} \ref{1romain}, we then deduce that $u$ is a weak $F_0$-subsolution.
\end{proof}

\subsection{Different junction conditions leading to the same problem}\label{s5.5}

In this short subsection, we give a simple example limited to a single branch ($N=1$), in order to simplify the presentation. We will introduce different boundary conditions (i.e. junction conditions here).
We consider the convex Hamiltonian $H:\R\to \R$ given by $H(p):=|p|$, and define its nonincreasing envelope $H^-(p):=\max(0,-p)$. We consider the following equation
\begin{equation}\label{eq::R1}
\left\{\begin{array}{ll}
u_t+ H(u_x)=0&\quad \mbox{on}\quad (0,+\infty)_t\times (0,+\infty)_x\\
u=u_0& \quad \mbox{on}\quad \left\{0\right\}_t\times [0,+\infty)_x\\
\end{array}\right.
\end{equation}
where the initial data $u_0$ is assumed to be Lipschitz continuous.

Given a flux limiter, i.e. a parameter $A\ge 0=\min H$, we consider the following boundary condition
\begin{equation}\label{eq::R2}
u_t + \max\left\{A,H^-(u_x)\right\}=0 \quad \mbox{on}\quad (0,+\infty)_t\times \left\{0\right\}_x.
\end{equation}
Now we consider the following three other boundary conditions
\begin{equation}\label{eq::Ra}
u_t + A=0\quad \mbox{on}\quad (0,+\infty)_t\times \left\{0\right\}_x
\end{equation}
or
\begin{equation}\label{eq::Rb}
u_t + 2A-u_x=0\quad \mbox{on}\quad (0,+\infty)_t\times \left\{0\right\}_x
\end{equation}
or
\begin{equation}\label{eq::Rc}
-u_x=-A\quad \mbox{on}\quad (0,+\infty)_t\times \left\{0\right\}_x.
\end{equation}

Then we have the following result.
\begin{Lemma}\label{lem::R5}{\bf (Same effective boundary condition)}\\
Under the previous assumptions and say, for continuous weak solutions $u:[0,+\infty)^2\to \R$, the four following problems are equivalent:
problem \eqref{eq::R1},\eqref{eq::R2}, problem \eqref{eq::R1},\eqref{eq::Ra}, problem \eqref{eq::R1},\eqref{eq::Rb} and  problem \eqref{eq::R1},\eqref{eq::Rc}.
\end{Lemma}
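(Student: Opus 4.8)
The strategy is to compute the relaxation $\frak R$ of each of the four candidate junction functions and check that all four produce the same self-relaxed junction condition, namely the one associated with \eqref{eq::R2} (which is already self-relaxed, being of flux-limiter form $\max(A,H^-)$ with $H^-$ the nonincreasing envelope of $H=|p|$). Once this is established, Proposition \ref{pro:wtostr} tells us that, for each of the four problems, weak $F_i$-solutions coincide with strong $\frak R F_i$-solutions; since the $\frak R F_i$ are all equal and the initial data and the interior PDE are common, the comparison principle of Section \ref{sec:6} gives existence and uniqueness of the weak solution for each problem, and that common solution is one and the same function. So the work reduces to four elementary one-dimensional relaxation computations.

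\textbf{Key steps.} First I would record $H(p)=|p|$, $H^-(p)=\max(0,-p)$, $H_{\min}=H_{\max}=H$ (since $N=1$), $H_-=H^-$, and note that $\frak R F_0 = \underline R\,\overline R F_0$ with $\underline R F_0(p)=\sup_{q\ge p}\min(F_0(q),|q|)$ and $\overline R F_0(p)=\inf_{q\le p}\max(F_0(q),|q|)$.

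\emph{(i) Condition \eqref{eq::R2}}, $F_1(p)=\max(A,H^-(p))=\max(A,-p)$ (using $A\ge 0$): this is continuous, nonincreasing, and satisfies $F_1\ge H^-=H_-$; one checks directly from Proposition \ref{pro:6.6} and Proposition \ref{pro:6.8} (or by hand) that $F_1$ is both sub- and super-relaxed, so $\frak R F_1 = F_1$.

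\emph{(ii) Condition \eqref{eq::Ra}}, $F_2(p)\equiv A$: then $\overline R F_2(p)=\inf_{q\le p}\max(A,|q|)$. For $p\ge -A$ this infimum is $A$ (attained at $q=0$ if $p\ge0$, or at $q=p$ after noting $|q|\le A$ for $q\in[-A,0]$... more precisely for $p\in[-A,0]$ one still gets $\max(A,|q|)=A$ at $q=p$); for $p<-A$ it equals $|p|=-p$. Hence $\overline R F_2(p)=\max(A,-p)=F_1(p)$, and then $\frak R F_2=\underline R F_1=F_1$.

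\emph{(iii) Condition \eqref{eq::Rb}}, $F_3(p)=2A-p$: continuous and (strictly) decreasing. Compute $\underline R F_3(p)=\sup_{q\ge p}\min(2A-q,|q|)$. The two curves $2A-q$ and $|q|$ meet where $2A-q=q$, i.e. $q=A$ (value $A$), and for $q\ge A$ one has $\min=2A-q\le A$ while the sup over $q\ge p$ of this is governed by the smallest admissible $q$; a short case analysis gives $\underline R F_3(p)=\max(A,-p)$ again for $p\le$ the relevant range, and then $\frak R F_3=\overline R F_1=F_1$. (Alternatively, observe $\max(F_3,H^-)$ has the same $\overline R$ as $F_3$ by Lemma \ref{lem:n1}, which simplifies the bookkeeping.)

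\emph{(iv) Condition \eqref{eq::Rc}}, $-u_x=-A$: here the junction ``function'' is the constraint $u_x=A$, which in the present framework is the limit of $F_0(p)$ forcing $p=A$; equivalently it is captured by taking $F_0$ with graph a vertical segment at $p=A$, or by a regularization $F_0^\varepsilon(p)=A+\varepsilon^{-1}(A-p)$ and letting $\varepsilon\to0$. One checks $\frak R$ of this is again $\max(A,-p)=F_1$: the sub-relaxation fills in the branch $p\mapsto -p$ for $p<A$ (since $\min(F_0,|q|)$ can only realize values $\le A$ there and the sup picks out $|q|$), and the super-relaxation fills in the constant $A$ for $p\ge A$.

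\textbf{Conclusion.} Collecting (i)--(iv): $\frak R F_1=\frak R F_2=\frak R F_3=\frak R F_4=\max(A,H^-)$. By Proposition \ref{pro:wtostr}, a continuous function $u$ is a weak solution of \eqref{eq::R1} together with any one of \eqref{eq::R2}, \eqref{eq::Ra}, \eqref{eq::Rb}, \eqref{eq::Rc} if and only if it is a strong $\max(A,H^-)$-solution of \eqref{eq::R1}. Since the strong problem is identical in all four cases, and comparison (Section \ref{sec:6}) yields a unique such solution with initial data $u_0$, the four problems have exactly the same solution set.

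\textbf{Main obstacle.} The only delicate point is item (iv): giving a clean meaning to the ``Dirichlet-on-the-gradient'' condition $-u_x=-A$ within the $F_0$-framework, since it is not literally a continuous $F_0:\R\to\R$. The cleanest route is to interpret it via viscosity inequalities directly (a subsolution test gives $\phi_x(t_0,0)\le A$ read off as $-\phi_x\ge -A$ being \emph{not} required, a supersolution test gives the reverse) and verify that the resulting weak-solution notion coincides with the weak $\max(A,H^-)$-solution notion — essentially reproving a one-branch instance of the equivalence already encapsulated in \cite{FIM1}. Everything else is a routine min/max computation with the explicit Hamiltonian $|p|$.
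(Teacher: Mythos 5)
Your proposal is correct and follows essentially the same route as the paper: compute the relaxation of each junction function (treating \eqref{eq::Rc} via the maximal monotone graph, i.e.\ the regularization $F_\varepsilon(p)=A-\varepsilon^{-1}(p-A)$), observe that all four relax to the same self-relaxed flux-limited condition, and conclude via Proposition \ref{pro:wtostr}. Note that your identification of the common effective condition as $\max\left\{A,H^-\right\}$ is the consistent one (Theorem \ref{th:relax} forces $\frak R F_0\ge H_-=H^-$), whereas the paper's printed $\min\left\{A,H^-\right\}$ appears to be a typo; the only small imprecision on your side is in item (iii), where $\underline R F_3(p)=2A-p\neq\max(A,-p)$ for $p>A$, but applying $\overline R$ afterwards still yields $\max\left\{A,H^-\right\}$ as you claim.
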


\begin{Remark}\label{rem::R8}
At the boundary $x=0$, recall that weak subsolutions satisfy  either the boundary subsolution inequality or the PDE subsolution inequality (and similarly for supersolutions).
\end{Remark}

\begin{proof}
Each desired boundary condition \eqref{eq::R2}, \eqref{eq::Ra}, \eqref{eq::Rb} writes 
\begin{equation}\label{eq::R7}
u_t+F_0(u_x)=0.
\end{equation}
Moreover the point $A$ satisfies $\chi (F_0)=\left\{A\right\}$.
It is then easy to check that $\frak R F_0=\min\left\{A,H^-\right\}$ and to conclude to the equivalence.

Condition \eqref{eq::Rc} is slightly different and requires more attention. Let us define the maximal monotone (nonincreasing) graph
$$F_0(p):=\left\{\begin{array}{ll}
\left\{+\infty\right\}& \quad \mbox{if}\quad p<A\\
\left[-\infty,+\infty\right]& \quad \mbox{if}\quad p=A\\
\left\{-\infty\right\}& \quad \mbox{if}\quad p>A\\
\end{array}\right.$$
Then subsolutions and supersolutions of \eqref{eq::Rc} can be rewritten at the junction point $x=0$ as
$$\left\{\begin{array}{ll}
\mbox{there exists $\overline \mu \in F_0(u_x)$ such that $u_t+\overline \mu \ge 0$}\quad \mbox{\bf (supersolution)}\\
\mbox{there exists $\underline \mu \in F_0(u_x)$ such that $u_t+\underline \mu \le 0$}\quad \mbox{\bf (subsolution).}\\
\end{array}\right.$$
This formulation is now closer to formulation \eqref{eq::R7}.
Notice also that such $F_0$ can be seen as a certain limit as $\varepsilon\to 0$ of $F_\varepsilon(p)=A-\varepsilon^{-1}(p-A)$.
Moreover the point $A$ satisfies 
$\left\{p\in \R,\ H(p)\in F_0(p)\right\}=\left\{A\right\}$ (and $\chi(F_\varepsilon)=\left\{A\right\}$).
It is then easy to check that $\frak R F_\varepsilon=\min\left\{A,H^-\right\}$. With a certain abuse of definition, we can also write $\chi(F_0)=\left\{A\right\}$ and $\frak R F_0=\min\left\{A,H^-\right\}$. Then along the same lines (and skipping the technicalities specific to that case), it is also possible to check that problem \eqref{eq::R1},\eqref{eq::Rc} is equivalent to problem \eqref{eq::R1},\eqref{eq::R2}. This ends the proof of the lemma.
\end{proof}

\section{Comparison principle}\label{sec:6}
\begin{Theorem}[Comparison principle]\label{th:PC}
Assume that $H$ satisfies \eqref{eq::p00} and that $F_0$  satisfies \eqref{eq::m11} and \eqref{eq::p63}. 
Assume also that $u_0$ is bounded and Lipschitz continuous. Let $u : [0,T) \times J\to \R$ (resp. $v$) be a bounded upper semi-continuous weak viscosity $F_0$-subsolution (resp. bounded lower semi-continuous weak viscosity $F_0$-supersolution) of \eqref{eq::m10}-\eqref{eq::p21}. If $u(0,\cdot)\le u_0 \le v(0,·)$ in $J$, then 
$$u\le v \quad {\rm in}\; [0,T)\times J.$$

\end{Theorem}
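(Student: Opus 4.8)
The plan is to reduce the \emph{weak} comparison to a \emph{strong} one, and then run a doubling-of-variables argument with a test function adapted to the junction. First, since $F_0$ satisfies the semi-coercivity assumption \eqref{eq::p63}, Lemma \ref{lem:semicoercivetoweakcontinuity} shows that $u^*$ satisfies the weak continuity condition \eqref{eq:weakcontinuity}; hence Proposition \ref{pro:wtostr} applies and $u^*$ is a strong $F$-subsolution while $v_*$ is a strong $F$-supersolution, where $F:=\frak R F_0$. By Theorem \ref{th:relax} and Lemma \ref{lem:proj}, $F$ is continuous, nonincreasing, self-relaxed (so $\underline R F=\overline R F=F$ and $\frak R F=F$) and semi-coercive (Remark \ref{rem:F-semicoercive}); moreover, by Corollaries \ref{cor:6.7} and \ref{cor:6.10}, every one-dimensional restriction of $F$ is again self-relaxed with respect to the corresponding $H^\alpha$. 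So it suffices to prove that a bounded upper semi-continuous strong $F$-subsolution $u$ and a bounded lower semi-continuous strong $F$-supersolution $v$ with $u(0,\cdot)\le u_0\le v(0,\cdot)$ satisfy $u\le v$.

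Next I carry out the usual preliminary reductions. Working on $[0,T-\delta]$ and replacing $v$ by $v+\eta/(T-t)$, which remains a strong $F$-supersolution and becomes strict on the branches, it is enough to rule out $M:=\sup_{[0,T)\times J}(u-v)>0$. Using the coercivity of the $H^\alpha$ together with the Lipschitz continuity of $u_0$, one constructs on each branch affine-in-$x$, linear-in-$t$ barriers bounding $u$ from above and $v$ from below near $t=0$; combined with boundedness and the strict supersolution correction near $t=T$, this guarantees that a maximum of the doubled functional below is attained at an interior time $t_0\in(0,T)$. One may also assume, after modifying the $H^\alpha$ (and hence $F$) outside a large ball before the previous step, that the $H^\alpha$ are globally Lipschitz and that only slopes in a fixed compact set per branch occur in the analysis.

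Then comes the doubling of variables. For small parameters $\varepsilon,\eta>0$ consider
$$\Phi(t,s,x,y)=u(t,x)-v(s,y)-\frac{(t-s)^2}{2\varepsilon}-G_\varepsilon(x,y)-\zeta_\eta(t,s,x,y),$$
where $\zeta_\eta$ gathers standard localizing lower-order terms and $G_\varepsilon:J\times J\to[0,+\infty)$ is a \emph{vertex test function} to be built. Away from the junction point $G_\varepsilon$ behaves like $|x-y|^2/(2\varepsilon)$, so that at a maximum point $(\bar t,\bar s,\bar x,\bar y)$ with $\bar x,\bar y\in J_\alpha^*$, subtracting the PDE viscosity inequalities and using uniform continuity yields the contradiction $M\le C\eta+o(1)$ as $\varepsilon\to0$. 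The crux is the behaviour at the junction: when $\bar x=0$ (the case $\bar y=0$ being symmetric), letting $p=(\partial_\alpha G_\varepsilon(0,\bar y))_{\alpha}$ be the vertex slope seen by $u$ and $q$ the vertex slope seen by $v$, $G_\varepsilon$ must be designed so that the strong $F$-subsolution inequality for $u$ and the strong $F$-supersolution inequality for $v$ at $x=0$ become incompatible, i.e. so that the produced slopes obey $F(p)\ge\lambda\ge F(q)$ for the relevant $\lambda$ governed by the time penalization. Such a $G_\varepsilon$ is assembled branch by branch from the one-dimensional building blocks of \cite{guerand} and \cite{FIM1}, using the Godunov-flux and characteristic-point description of $F$ (Propositions \ref{pro:6.6}, \ref{pro:6.8}, \ref{pro:6.20}) and the test-function reductions of Propositions \ref{pro::e36} and \ref{pro::e26}, all of which are available precisely because $F$ and its one-dimensional restrictions are self-relaxed.

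I expect the construction of $G_\varepsilon$ in the multi-branch, \emph{non-convex} setting — and the verification that the vertex slopes it generates lie on the sub-/super-characteristic set of $F$ with the required ordering — to be the main obstacle; the Imbert–Monneau convex vertex test function does not apply directly, and one must glue the one-branch non-convex constructions consistently at the origin. Once $G_\varepsilon$ is in place, the case analysis at $(\bar x,\bar y)$ (both interior, or one of them equal to $0$) closes the estimate, and letting successively $\varepsilon\to0$, $\delta\to0$ and $\eta\to0$ contradicts $M>0$, which proves $u\le v$ on $[0,T)\times J$.
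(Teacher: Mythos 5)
Your proof has a genuine gap at its central step. The entire argument hinges on the existence of a ``vertex test function'' $G_\varepsilon$ on $J\times J$ whose slopes at the junction make the strong $F$-sub- and supersolution inequalities incompatible, and you yourself flag its construction in the multi-branch, non-convex setting as ``the main obstacle.'' That obstacle is not a technicality you can defer: the Imbert--Monneau vertex test function is built from the convex structure of the Hamiltonians (via their Legendre transforms and the single flux limiter $A$), and no analogue is known for general coercive non-convex $H^\alpha$ on a junction with $N\ge 2$ branches. Gluing the one-branch constructions of \cite{guerand} and \cite{FIM1} ``consistently at the origin'' is precisely the point where the argument would have to produce, for \emph{every} pair of relevant slopes, a compatible pair $(p,q)$ with $F(p)\ge\lambda\ge F(q)$, and nothing in your sketch shows this can be done. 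As written, the proof does not close.

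The paper deliberately avoids this route. It invokes the \emph{twin blow-ups method} of \cite{FIM2,FIM3}: after the standard reductions one blows up $u$ and $v$ \emph{separately} at a contact point (with a time penalization only, no space-doubling term $G_\varepsilon(x,y)$ at the vertex), obtains two one-homogeneous profiles on $J$ together with the critical slopes $\overline p^\alpha,\underline p^\alpha$ of \eqref{eq::n12b}, and then applies Proposition \ref{pro::n6jb} directly to the \emph{weak} viscosity inequalities $\min\{F_1,H_{1;\min}\}(u_x)\le 0$ and $\max\{F_2,H_{2;\max}\}(v_x)\ge\eta$. With $H_1=H_2$ and $F_1=F_2$ the alternative \eqref{eq::n8jb} forces $0<\eta\le 0$, a contradiction. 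Note also that this makes your preliminary reduction from weak to strong $\frak R F_0$-solutions unnecessary (though correct, and acknowledged in the Remark following the theorem): the paper's key proposition is phrased for the weak inequalities themselves. If you want a complete proof along your lines you would have to either actually construct $G_\varepsilon$ (an open-ended task here) or switch to the blow-up lemma, which is what replaces it.
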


\begin{Remark}
{Notice that in Theorem \ref{th:PC}, semi-coercivity of $F_0$ in condition \eqref{eq::p63} can be replaced by the weak continuity of the subsolution $u$ at the junction, using Proposition \ref{pro:wtostr} and replacing $F_0$ by $\frak R F_0$.}
\end{Remark}

The proof of the theorem is very closed to the one of \cite[Theorem 1]{FIM2} (see also \cite{FIM3}) and we skip it. The key point is to replace \cite[Corollary 1]{FIM2} by the following proposition which proof is a very simple extension.
\begin{Proposition}[Junction viscosity inequalities]\label{pro::n6jb}
We consider two sets of functions $u,v: J \to \R\cup \left\{-\infty,+\infty\right\}$ with $u$ upper semi-continuous and $v$ lower semicontinous satisfying
\begin{equation}\label{eq::n2b}
u(0)=0=v(0)\quad \mbox{with}\quad u\le v \quad \mbox{on}\quad J.
\end{equation}
For $\a=1,\dots,N$, we define
\begin{equation}\label{eq::n12b}
\overline p^\a:=\limsup_{J_\a^*\ni x \to 0} \frac{u(x)}{x},\quad \underline p^\a:=\liminf_{J_\a^*\ni x \to 0} \frac{v(x)}{x}.
\end{equation}
We also set  $a^\a:=\min \left\{\underline p^\a, \overline p^\a\right\}$, $b^\a:=\max\left\{\underline p^\a, \overline p^\a\right\}$ and
$$[a,b]\cap \R^N:=\prod_{\a=1,\dots,N} \left([a^\a,b^\a]\cap \R\right).$$
For $\gamma=1,2$, consider continuous functions $H_\gamma^\a: \R\to \R$ and $F_\gamma:\R^N \to \R$ with $H_1^\a$ coercive and $F_1$  semi-coercive. For $p=(p^1,\dots,p^N)\in \R^N$, we set
$$H_{\gamma; \min}(p)=\min_{\a=1,\dots,N} H^\a_\gamma(p^\a),\quad H_{\gamma;\max}(p)=\max_{\a=1,\dots,N} H^\a_\gamma(p^\a).$$
We also assume that we have the following viscosity inequalities for some $\eta>0$
\begin{equation}\label{eq::n7b}
\left\{\begin{array}{rlrll}
H^\a_1(u_x) \le 0 &\quad  \mbox{on} & \quad J_\a &\cap \left\{|u|< +\infty\right\}& \quad \mbox{for}\quad \a=1,\dots,N\\
\min\left\{F_1,H_{1;\min}\right\}(u_x) \le 0 &\quad  \mbox{on} & \quad \left\{0\right\} &\cap \left\{|u|< +\infty\right\}&\\
\\
H_2^\a(v_x) \ge  \eta &\quad  \mbox{on} & \quad J_\a&\cap \left\{|v|< +\infty\right\}& \quad \mbox{for}\quad \a=1,\dots,N\\
\max\left\{F_2,H_{2;\max}\right\}(v_x) \ge \eta &\quad  \mbox{on} & \quad \left\{0\right\} &\cap \left\{|v|< +\infty\right\}.&\\
\end{array}\right.
\end{equation}
Then  there exists $p=(p^1,\dots,p^N)\in [a,b]\cap \R^N\not=\emptyset$ such that
\begin{equation}\label{eq::n8jb}
\left\{\begin{array}{lrcll}
\mbox{either}&\quad H^\a_1(p^\a)&\le 0 < \eta \le & (H^\a_2-H^\a_1)(p^\a) &\quad \mbox{for some $\a\in \left\{1,\dots,N\right\}$}\\
\mbox{or}&\quad \max(F_1,H_{1;\max})(p)&\le 0 <  \eta \le &(F_2-F_1)(p).&
\end{array}\right.
\end{equation}
\end{Proposition}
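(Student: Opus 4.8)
We would follow the scheme of \cite[Corollary~1]{FIM2} (the case $N=1$), separating the branch inequalities from the junction condition and then coupling them. \emph{Step 1 (finiteness reductions).} By coercivity of $H^\alpha_1$, a USC subsolution of $H^\alpha_1(u_x)\le0$ on $J^*_\alpha$ which is finite somewhere on $J^*_\alpha$ is Lipschitz near $0$ on $J^*_\alpha$ with a constant depending only on $H^\alpha_1$; hence it is finite there and has a limit as $x\to0^+$. That limit equals $u(0)=0$, for otherwise $u$ would have a downward spike at the junction on some branch, so that \emph{every} test function (that is $C^1$ on each branch) with value $0$ at $0$ touches $u$ from above there; feeding in test functions whose gradient at $0$ has one very negative component and the others very positive and using coercivity of the $H^\beta_1$ together with the semi-coercivity \eqref{eq::p63} of $F_1$, the left-hand side of the subsolution junction inequality in \eqref{eq::n7b} becomes strictly positive — a contradiction. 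Thus $u(x)\to 0$ on each branch, $\overline p^\alpha:=\limsup_{J^*_\alpha\ni x\to0}u(x)/x$ is finite, and, running the argument of Lemma~\ref{lem::e39} in the stationary setting (with the test function $x\mapsto Mx$, $M>\overline p^\alpha$, touching $u$ from above at $0$ on $J_\alpha$), one gets $H^\alpha_1(\overline p^\alpha)\le0$. In particular $(\overline p^\alpha)_\alpha\in[a,b]\cap\R^N$, so this set is nonempty; and since $v\ge u$, $v$ is bounded below near $0$, so $\underline p^\alpha\in(-\infty,+\infty]$. The fully degenerate cases ($u$ or $v$ identically $\pm\infty$ near $0$ on a branch) are handled as in \cite{FIM2}.

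\emph{Step 2 (junction constraints).} For $q\in\R^N$ with $q^\alpha>\overline p^\alpha$ for all $\alpha$, the function equal to $q^\alpha x$ on $J_\alpha$ touches $u$ from above at $0$, so \eqref{eq::n7b} gives $\min\{F_1,H_{1;\min}\}(q)\le0$; letting $q$ decrease to $\overline p$ and using continuity, $\sup_{q\ge\overline p}\min\{F_1,H_{1;\min}\}(q)\le0$ — that is, $\underline R F_1(\overline p)\le0$ in the notation of Section~\ref{sec:4} (relaxation with respect to $H_1$). Symmetrically, since $q^\alpha x$ touches $v$ from below at $0$ on $J_\alpha$ whenever $q^\alpha<\underline p^\alpha$, the supersolution junction inequality yields $\inf_{q\le\underline p}\max\{F_2,H_{2;\max}\}(q)\ge\eta$. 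One also records, as in the $N=1$ case, that $H^\alpha_2(\underline p^\alpha)\ge\eta$ on those branches where the critical slope of $v$ at $0$ is finite.

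\emph{Step 3 (the dichotomy).} Either some branch is ``binding'': for some $\alpha$ there is $p^\alpha$ lying in the closed interval between $\overline p^\alpha$ and $\underline p^\alpha$ (hence in $[a^\alpha,b^\alpha]$) with $H^\alpha_1(p^\alpha)\le0$ and $H^\alpha_2(p^\alpha)\ge\eta$ — this is what a one-dimensional doubling of variables on $J_\alpha$ produces when the relevant supremum of $u-v$ near $0$ is attained at interior points of $J_\alpha$; then completing $p^\alpha$ to any $p\in[a,b]\cap\R^N$ gives the first alternative of \eqref{eq::n8jb} (using $H^\alpha_1(p^\alpha)\le0$ to pass from $H^\alpha_2(p^\alpha)\ge\eta$ to $(H^\alpha_2-H^\alpha_1)(p^\alpha)\ge\eta$). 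Or no branch is binding, all the relevant suprema being pushed onto the junction point; one then runs an intermediate-value argument along a path in the box $[a,b]\cap\R^N$ joining $\overline p$ to $\underline p$, combining the two junction constraints from Step~2 and the continuity of $F_1,F_2,H^\alpha_\gamma$, to produce a single $p\in[a,b]\cap\R^N$ with $F_1(p)\le0$, $H^\alpha_1(p^\alpha)\le0$ for every $\alpha$, and $F_2(p)\ge\eta$ — the second alternative of \eqref{eq::n8jb}.

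The hard part is Step~3: organizing the case distinction so that the per-branch data and the two \emph{global} junction inequalities assemble into one slope vector realizing $\max\{F_1,H_{1;\max}\}(p)\le0$ and $(F_2-F_1)(p)\ge\eta$ at once. The coupling of all $N$ branches through the single function $F_0$ (trivial when $N=1$) is exactly what makes this subtler than \cite[Corollary~1]{FIM2}; some additional bookkeeping of the sets $\{|u|=+\infty\}$ and $\{|v|=+\infty\}$ is also needed throughout.
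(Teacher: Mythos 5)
The paper itself does not prove this proposition: it only asserts that the proof is ``a very simple extension'' of \cite[Corollary 1]{FIM2}, so your proposal can only be judged against that intended route and on its own merits. Your Steps 1 and 2 are sound and match the standard machinery: coercivity of $H_1^\a$ gives a Lipschitz bound on $u$ near the junction, semi-coercivity of $F_1$ rules out a drop of $u$ at $x=0$ (so $\overline p^\a$ is finite and $H^\a_1(\overline p^\a)\le 0$), the dual critical-slope argument gives $H^\a_2(\underline p^\a)\ge \eta$ when $\underline p^\a$ is finite, and testing with affine functions of slope $q\ge \overline p$ (resp.\ $q\le \underline p$) yields $\sup_{q\ge\overline p}\min\{F_1,H_{1;\min}\}(q)\le0$ and $\inf_{q\le\underline p}\max\{F_2,H_{2;\max}\}(q)\ge\eta$.

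The genuine gap is Step 3, which you yourself flag as ``the hard part'' and which, as written, is a plan rather than a proof. Two concrete problems. First, in the ``binding branch'' case you invoke a doubling of variables on $J_\a$ but verify none of what it requires: that the penalized maximum points stay away from the junction (or what happens when they do not), and that the limiting slope lands in $[a^\a,b^\a]$; alternatively one can argue directly that if $\underline p^\a\le\overline p^\a$ then every $q^\a\in[\underline p^\a,\overline p^\a]$ lies in both slope intervals $[\liminf u/x,\limsup u/x]$ and $[\liminf v/x,\limsup v/x]$ on $J_\a$ and is therefore attained at interior touching points of $u$ from above and of $v$ from below, giving $H^\a_1(q^\a)\le0$ and $H^\a_2(q^\a)\ge\eta$ simultaneously --- but neither version is actually carried out. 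Second, and more seriously, in the remaining case $\overline p^\a<\underline p^\a$ for all $\a$, the information from Step 2 is only of ``either/or'' type at each $q$ in the box (either $F_1(q)\le0$ or \emph{some} $H^\a_1(q^\a)\le0$, and dually for $v$), whereas the junction alternative of \eqref{eq::n8jb} requires the conjunction $F_1(p)\le0$, $H^\a_1(p^\a)\le0$ for \emph{every} $\a$, and $F_2(p)\ge\eta$ at a single $p$; a one-parameter intermediate-value argument along an unspecified path cannot by itself upgrade a disjunction into this conjunction, especially since $F_1,F_2$ are not assumed monotone in this proposition. The actual argument must interleave the two alternatives --- for instance, choose $\hat p^\a$ maximal in $[\overline p^\a,\underline p^\a]$ with $H^\a_1\le0$ on $[\overline p^\a,\hat p^\a]$, evaluate both junction inequalities at $\hat p$ (which satisfies $\hat p\ge\overline p$ and $\hat p\le\underline p$), and show that every way the junction alternative can fail there forces a branch alternative. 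Until this case analysis is written out, the proof is incomplete.
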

\bigskip
%
%
%

\section{A third relaxation formula using Riemann problem}\label{sec:7}
The goal of this section is to give a third relaxation formula. This formula is based on Riemann problem, defined for $p\in \R^N$ by
\begin{equation}\label{eq:riemann}
\left\{\begin{array}{ll}
{u}_t + {H}^\alpha({u}_x)=0 & \mbox{on} \; (0,+\infty)\times J_\alpha^*,\quad \mbox{for}\quad \alpha=1,\dots,N\\
{u}_t + {F_0}(\partial_1 {u}(t,0^+),\dots,\partial_N {u}(t,0^+))=0 & \mbox{on} \; (0,+\infty) \times \{0\},\\
u(0,x)=u_0(x)=px&\mbox{on}\; \left\{0\right\}\times J,
\end{array}\right.
\end{equation}
where the initial condition $u_0(x)=px$ means that
$$u_0(x)=p^\a x\quad \textrm{for all }x\in J_\alpha\quad \mbox{and}\quad \a=1,\dots ,N.$$
We then have the following result concerning the properties of the solution to the Riemann problem.

\begin{Theorem}[Properties of the Riemann problem]\label{th:existence-riemann}
Assume that $H$ satisfies \eqref{eq::p00} and that $F_0$  satisfies \eqref{eq::m11}. Then for every $p\in \R^N$ there exists a unique weak $F_0$-solution $u$ to problem \eqref{eq:riemann} satisfying the weak continuity condition \eqref{eq:weakcontinuity} and such that, for any $T>0$ there exists $C_T>0$ such that
$$|u(t,x)-u_0(x)|\le C_T\quad \textrm{for all }(t,x)\in\{0,T)\times J.
$$
Moreover, $u$ is self-similar, i.e. it satisfies 
$$u(\mu t,\mu x)=\mu u (t,x)\quad \textrm{for all }\mu>0\textrm{ and } (t,x)\in[0,+\infty)\times J$$
and $u$ is globally Lipschitz continuous and convex or concave in $(t,x)$ on each branch $[0,+\infty)\times J_\a$, the convexity/concavity depending on the branch $\a=1,\dots,N$.
\end{Theorem}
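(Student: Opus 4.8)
The plan is to obtain the weak $F_0$-solution to the Riemann problem \eqref{eq:riemann} by a combination of Perron's method, a scaling argument, and a reduction to one branch at a time. First I would invoke the general existence result, Theorem \ref{th::p40}, with $u_0(x)=px$ (which is uniformly continuous on $J$), to produce a weak $F_0$-solution $u$. The bound $|u(t,x)-u_0(x)|\le C_T$ comes from comparison against suitable super/subsolutions of the form $px\pm Ct$: indeed, since the $H^\alpha$ are continuous and $px$ has bounded gradient, one can choose $C$ so large that $px+Ct$ is a (strong, hence weak) $F_0$-supersolution and $px-Ct$ a $F_0$-subsolution, and then apply the comparison principle (Theorem \ref{th:PC}, using that $\frak R F_0\ge H_-$ is automatically semi-coercive, Remark \ref{rem:F-semicoercive}, or equivalently replacing $F_0$ by $\frak R F_0$ and using that a weak $F_0$-solution is a strong $\frak R F_0$-solution by Proposition \ref{pro:wtostr}).

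Self-similarity follows from uniqueness together with the scale-invariance of the problem: if $u$ solves \eqref{eq:riemann}, then for $\mu>0$ the function $u_\mu(t,x):=\mu^{-1}u(\mu t,\mu x)$ is again a weak $F_0$-solution with the same initial datum $px$ (the equation and the junction condition are positively $1$-homogeneous under this rescaling, and the uniform bound is preserved because $|u_\mu(t,x)-px| = \mu^{-1}|u(\mu t,\mu x)-p(\mu x)|\le \mu^{-1}C_{\mu T}$, which one checks stays bounded on compact time intervals using the linear-in-$t$ form of the barrier constant). Since both $u$ and $u_\mu$ satisfy the weak continuity condition and the growth bound, uniqueness forces $u_\mu=u$, i.e. $u(\mu t,\mu x)=\mu u(t,x)$. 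For this to work I first need the uniqueness statement, which again is the comparison principle applied to two such solutions (after passing to $\frak R F_0$ so that the junction condition is self-relaxed and the weak continuity holds automatically).

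For the Lipschitz bound and the convexity/concavity on each branch, I would argue branch by branch. On a fixed branch $J_\alpha^*$ the function solves the scalar equation $u_t+H^\alpha(u_x)=0$; restricting to a half-line with a one-sided ``boundary'' datum at $x=0$ coming from the junction, and using self-similarity, $u$ restricted to $[0,\infty)\times J_\alpha$ is a self-similar solution of a scalar Hamilton--Jacobi equation. Such self-similar solutions of a one-dimensional HJ equation with continuous coercive Hamiltonian are classically globally Lipschitz (the gradient takes values in a compact set determined by the finitely many relevant slopes — the datum slope $p^\alpha$ and the Godunov/Riemann slopes selected at the junction, all controlled by the uniform bound via a standard cone-of-dependence argument), and they are convex or concave in $(t,x)$ depending on whether a rarefaction or a shock is produced — this is exactly the one-branch situation already analyzed in \cite{FIM1, guerand}, which I would cite. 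The convexity/concavity alternative on branch $\alpha$ is governed by the sign of $p^\alpha$ relative to the slope selected by the relaxed junction condition $\frak R F_0$ and the shape of $H^\alpha$ between them.

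The main obstacle I expect is establishing the global Lipschitz bound uniformly in a way compatible with the scaling: a priori the growth estimate $|u-px|\le C_T$ only controls $u$ up to an additive constant growing with $T$, not its gradient. The key is to combine self-similarity (which reduces everything to, say, $t=1$) with the finite-speed-of-propagation/cone argument for each scalar equation $u_t+H^\alpha(u_x)=0$, so that the spatial slope of $u(\mu t,\mu x)$ on branch $\alpha$ stays in a fixed compact interval independent of $\mu$; then the bound at one time level plus self-similarity upgrades to a genuine global Lipschitz bound in $(t,x)$. Once the gradient is known to take only finitely many values (on each branch, interpolating between $p^\alpha$ and the junction-selected slope, possibly through a rarefaction fan), the convexity/concavity assertion is read off directly from the explicit structure of self-similar scalar HJ solutions.
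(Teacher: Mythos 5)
Your existence, barrier, and self-similarity steps coincide with the paper's: Perron's method with the barriers $px\pm Ct$, the comparison principle applied after replacing $F_0$ by $F=\frak R F_0$ (which is automatically semi-coercive), and uniqueness applied to $u_\mu(t,x)=\mu^{-1}u(\mu t,\mu x)$. For the global Lipschitz bound, however, the paper uses a much simpler route than your cone-of-dependence plan: since $u\le u_0+Ct$, the time translate $u^h(t,x)=u(t+h,x)-Ch$ satisfies $u^h(0,\cdot)\le u_0$, so comparison gives $u(t+h,x)-u(t,x)\le Ch$ (and symmetrically), i.e.\ $|u_t|\le C$ everywhere; coercivity of $H^\alpha$ and the equation then bound $u_x$ on each open branch. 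No finite-speed-of-propagation argument is needed, and the obstacle you flag dissolves.

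The genuine gap is in the convexity/concavity assertion. You claim that self-similar solutions of a scalar HJ equation with a merely continuous coercive Hamiltonian are ``classically'' convex or concave according to a shock/rarefaction dichotomy, that the gradient takes only finitely many values, and that this is already contained in \cite{FIM1,guerand}. None of this holds for non-convex $H^\alpha$: in a rarefaction fan the gradient varies continuously, non-convex Hamiltonians produce composite wave patterns rather than a clean shock/rarefaction alternative, and neither cited reference proves convexity or concavity of Riemann profiles. This point is precisely the nontrivial content of the last part of the theorem, and the paper proves it directly. Writing $u=tW(x/t)$ on branch $\alpha$, so that $W-\xi\,\partial_\alpha W+H^\alpha(\partial_\alpha W)=0$ on $J_\alpha^*$, one compares $W$ on each interval $[a,b]\subset J_\alpha^*$ with the affine function $c_{a,b}$ interpolating its endpoint values. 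The difference $\tilde W=W-c_{a,b}$ vanishes at $a$ and $b$ and satisfies $\tilde W-\xi\,\partial_\alpha\tilde W=-\tilde H^\alpha(\partial_\alpha\tilde W)$ with $\tilde H^\alpha(q)=H^\alpha(q+\bar p)+\bar\lambda$, where $\bar p$ is the chord slope; the maximum principle (the zeroth-order term has the right sign) forces $W\le c_{a,b}$ on $(a,b)$ when $\tilde H^\alpha(0)\ge 0$ and $W\ge c_{a,b}$ when $\tilde H^\alpha(0)\le 0$, and one then checks that the same alternative holds for all choices $0<a<b$, which is exactly convexity or concavity of $W$, hence of $u$ on that branch. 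Without an argument of this kind your proof of the final assertion is missing.
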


\begin{proof}
We set $F=\frak R F_0$, which is semi-coercive because of Remark \ref{rem:F-semicoercive}.

\noindent{\bf Step 1: Existence of a solution to \eqref{eq:riemann}.} Let  
$$C\ge \max(\max_\a |H_\a(p^\a)|, |F(p)|).$$ Then $(t,x)\mapsto u_0(x)+Ct$ and $(t,x)\mapsto u_0(x)-Ct$ are respectively super and subsolution to \eqref{eq:riemann} with $F_0$ replaced by $F$. Using Perron's method, we deduce that there exists a weak $F$-solution $u$ to \eqref{eq:riemann}, satisfying the following barriers
\begin{equation}\label{eq::N2}
u_0-Ct\le u\le u_0+Ct.
\end{equation}
Then the comparison principle implies that $u^*\le u_*$ which shows that $u$ is continuous.
Then, using $\underline R F=F=\overline R F$, Lemma \ref{lem:relax-op7} implies that $u$ is a strong $F$-solution. Now Proposition \ref{pro:wtostr} implies that $u$ is a weak $F_0$-solution.

By Proposition \ref{pro:wtostr}, we deduce that $u$ is also a $F_0$-strong solution and a $F_0$-weak solution.\bigskip

\noindent{\bf Step 2: $u$ is globally Lipschitz continuous.} 
For $h>0$, we  set $u^h(t,x)=u(t+h,x)-Ch$. From \eqref{eq::N2}, we deduce that
$u^h$ is solution to the same equation and satisfies
$$u^h(0,x)=u(h,x)-Ch\le u_0(x).$$
The comparison principle then implies that
$$u(t+h,x)-Ch=u^h(t,x)\le u(t,x).$$
In the same way, we have that 
$$u(t+h,x)+Ch\ge u(t,x),$$
which implies that $u$ is Lipschitz continuous in time. Since $H^\a$ is coercive, the equation satisfied by $u$ implies that $u$ is also globally Lipschitz continuous on each open branch $(0,+\infty)\times J_\a^*$, and then on the whole $[0,+\infty)\times J$.
\bigskip

\noindent{\bf Step 3: $u$ is self-similar and convexe or concave}. Let $\mu>0$ and set $u^\mu(t,x)=\frac 1\mu u(\mu t,\mu x)$. Then $u^\mu$ is solution to the same equation and satisfies
$$u^\mu(0,x)=\frac 1\mu u(0,\mu x)=px=u(0,x).$$
Then, by uniqueness, we get that 
$$u=u^\mu=\frac 1\mu u(\mu t,\mu x)\quad \textrm{for all }\mu>0,$$
which show that $u$ is self-similar.

We now fix $\a\in\{1,\dots, N\}$. Since $u$ is self-similar, there exists $W:J_\a\to \R$ such that 
$$u(t,x)=tW\left(\frac {x} t\right).$$
We want to prove that $W$ is either convex or concave on $J_\a$. 
We have that $W$ is a viscosity solution of
\begin{equation}\label{eq:W}
W-\xi \partial_\a W(\xi)+H^\a(\partial_\a W(\xi))=0\quad \textrm{if }\xi \in J_\a^*.
\end{equation}
For $0<a<b$ with $a,b\in J_\alpha^*$, we define 
$$S^\a_{a,b}:=\left\{(t,x)\in (0,+\infty)\times J_\a;\; a<\frac {x}t<b\right\}.$$
We then set
$$\bar p:=\frac{W(b)-W(a)}{b-a},\quad  \bar \lambda :=\frac{bW(a)-aW(b)}{b-a}$$
and 
$$v(t,x)=\bar \lambda t+\bar p x$$
so that
$$v(t,x)=u(t,x)\quad \textrm{for any } (t,x)\in \partial S^\a_{a,b}.$$
Since $v$ is self-similar, there exists $W_{a,b}$ such that $\displaystyle{v (t,x)=tW_{a,b}\left(\frac x t\right)}$.
We define $\tilde W:=W-W_{a,b}$ which satisfies $\tilde W=0$ on $\partial [a,b]$.
Using that 
$\bar \lambda=v_t(t,x)=W_{a,b}\left(\xi\right)-\xi\partial_\a W_{a,b}\left(\xi\right)$ for $\xi:=\frac{x}{t}$, we deduce that 
$$\tilde W-\xi \partial_\a\tilde W=W-\xi\partial _\a W-\left\{W_{a,b}-\xi\partial _\a W_{a,b}\right\}=-H^\a(\partial _\a W)-\bar \lambda=-\tilde H^\a(\partial _\a \tilde W)$$  
with $\tilde H^\alpha(p):=H^\alpha(p+\bar p)+\bar \lambda.$

We now distinguish two cases. If $\tilde H(0)\ge 0$, we assume by contradiction that
$$\max_{\xi\in J_\a,\; \xi\in[a,b]}\tilde W=\tilde W(\bar \xi)>0\quad \textrm{with }\bar \xi\in (a,b).$$ 
Then the equation satisfied by $\tilde W$ implies that
$$\tilde W(\bar \xi)+\tilde H^\a(0)\le 0$$
which is absurd. Therefore
$$\tilde W=W-W_{a,b}\le 0\quad \textrm{on }(a,b) \subset J_\alpha^*,$$
i.e.
\begin{align}\label{eq:789}
W(\xi) \le & W_{a,b}(\xi)=\bar \lambda +\bar p\xi\nonumber\\
=&\frac{bW(a)-aW(b)}{b-a}+ |\xi| \frac{W(b)-W(a)}{b-a}\nonumber\\
=&W(a)+(\xi-a)\left(\frac {W(b)-W(a)}{b-a}\right):=c_{a,b}(\xi)
\end{align}

In the same way, if  $\tilde H(0)\le 0$, we can show that
\begin{equation}\label{eq:790}
W\ge c_{a,b}(\xi)\quad \textrm{on } (a,b)\subset J_\alpha^*
\end{equation}
We can easily check that this implies that we have either \eqref{eq:789} for all choices $b>a>0$ or we have \eqref{eq:790} for all choices $b>a>0$. This means that $W$ is either convex or concave on $J_\a$ and so $u$ is convex or concave on each branch.
\end{proof}

We are now ready to give the third relaxation formula:
\begin{Theorem}[Relaxation through Riemann problem]
Assume that $H$ satisfies \eqref{eq::p00} and that $F_0$  satisfies \eqref{eq::m11}. For $p\in \R^N$, let $u$ be the unique $F_0$-relaxed solution to problem \eqref{eq:riemann} given by Theorem \ref{th:existence-riemann}.
Then the relaxation operator is given by 
$$\frak R F_0(p)=-u_t(t,0)\quad \textrm{for all }t>0.$$
Moreover the restriction of $u$ on each branch has a derivative at $x=0$ and satisfies
$$u_x(t,0^+)=\hat p  \quad \mbox{for all}\quad t>0$$
where $\hat p\in \R^N$ and is given by (with $F=\frak RF_0$)
$$\hat p^\alpha = \left\{\begin{array}{ll}
p^\alpha & \quad \mbox{if}\quad H^\alpha(p^\alpha)=F(p),\\
\\
\sup \left\{q^\alpha\ge p^\alpha,\quad H^\alpha(q'^\alpha)< F(p) \quad \mbox{for all}\quad q'^\alpha\in [p^\alpha,q^\alpha)\right\}
& \quad \mbox{if}\quad H^\alpha(p^\alpha)<F(p),\\
\\
\inf \left\{q^\alpha\le p^\alpha,\quad H^\alpha(q'^\alpha)> F(p) \quad \mbox{for all}\quad q'^\alpha\in (q^\alpha,p^\alpha]\right\}
& \quad \mbox{if}\quad H^\alpha(p^\alpha)>F(p),\\
\end{array}\right.$$
Moreover $F(\hat p)=F(p)$ and then 
in particular $(F(\hat p),\hat p)$ belongs to the associated germ
$$\mathcal G = \left\{(\lambda,q)\in \R\times \R^N,\quad \lambda= F(q)=H^\alpha(q^\alpha)\quad \mbox{for all}\quad \alpha=1,\dots,N\right\}$$
Therefore, we have
$$F(p)=F(\hat p)=G^\alpha(\hat p^\alpha,p^\alpha)=H^\alpha(\hat p^\alpha)\quad \mbox{for all}\quad \alpha=1,\dots,N$$
\end{Theorem}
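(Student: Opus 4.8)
The plan is as follows. Throughout, write $F=\frak R F_0$; it is semi-coercive by Remark~\ref{rem:F-semicoercive}, and the proof of Theorem~\ref{th:existence-riemann} already tells us that the unique $F_0$-relaxed solution $u$ of \eqref{eq:riemann} is in fact a strong $F$-solution, globally Lipschitz, self-similar, and convex or concave in $(t,x)$ on each branch. Writing $u(t,x)=t\,W_\alpha(x/t)$ on $[0,+\infty)\times J_\alpha$, self-similarity gives $u_t(t,0)=W_\alpha(0)$ and $u_x(t,0^+)=W_\alpha'(0^+)$, the one-sided derivative existing because $W_\alpha$ is convex or concave; continuity of $u$ at the junction makes $W_\alpha(0)$ independent of $\alpha$. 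Set $\lambda:=-W_\alpha(0)$ and $\hat p^\alpha:=W_\alpha'(0^+)$, so that $u_t(t,0)=-\lambda$ and $u_x(t,0^+)=\hat p$ for all $t>0$; from the barriers $u_0-Ct\le u\le u_0+Ct$ of the proof of Theorem~\ref{th:existence-riemann} one gets $p^\alpha\xi-C\le W_\alpha(\xi)\le p^\alpha\xi+C$, hence $W_\alpha'(+\infty)=p^\alpha$, so on each branch $W_\alpha'$ runs monotonically between $\hat p^\alpha$ and $p^\alpha$. It remains to identify $\lambda=F(p)$ and the explicit form of $\hat p$.

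First I would show $(\lambda,\hat p)\in\mathcal G$. The identity $H^\alpha(\hat p^\alpha)=\lambda$ follows by letting $\xi\downarrow 0$ along points of differentiability in the self-similar equation \eqref{eq:W}, $W_\alpha(\xi)-\xi W_\alpha'(\xi)+H^\alpha(W_\alpha'(\xi))=0$, using that $W_\alpha'$ is bounded and $W_\alpha'(\xi)\to\hat p^\alpha$. The identity $F(\hat p)=\lambda$ uses the strong $F$-solution property at a point $(t_0,0)$: on the branches where $W_\alpha$ is concave the affine function $-\lambda t+\hat p^\alpha x$ touches $u$ from above at $(t_0,0)$ (globally, as the tangent line at $0^+$ lies above a concave function), while on the branches where $W_\alpha$ is convex one uses instead the chord slopes $m_\delta^\alpha:=(W_\alpha(\delta)-W_\alpha(0))/\delta\downarrow\hat p^\alpha$, which dominate $W_\alpha$ on $[0,\delta]$; gluing these into a test function $\varphi_\delta\in C^1(J_T)$ touching $u$ from above at $(t_0,0)$ and invoking the strong $F$-subsolution inequality gives $-\lambda+F((\varphi_\delta)_x(t_0,0))\le 0$, whence $F(\hat p)\le\lambda$ by letting $\delta\downarrow0$ and using continuity of $F$; the symmetric choice of chord slopes from the other side (touching $u$ from below) gives $F(\hat p)\ge\lambda$.

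The main step is $\lambda=F(p)$. For this I would use the classical structure of the self-similar solution on each half-line $J_\alpha$: since $W_\alpha$ is convex or concave with $W_\alpha'$ sweeping monotonically the interval between $\hat p^\alpha$ and $p^\alpha$ and obeying the Rankine--Hugoniot/Oleinik relations carried by \eqref{eq:W}, the interface trace $\hat p^\alpha$ is exactly the Godunov value, i.e. $G^\alpha(\hat p^\alpha,p^\alpha)=H^\alpha(\hat p^\alpha)=\lambda$. Combined with $F(\hat p)=\lambda$ from the previous step, this says that $\hat p$ realizes the Godunov fixed point of Proposition~\ref{pro:n1} for the function $F$, so $\lambda=(F\,G)(p)$. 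Since the Godunov relaxation coincides with $\frak R$ (Section~\ref{sec:4}) and $F=\frak R F_0$ is self-relaxed, $(F\,G)(p)=\frak R F(p)=F(p)$. Hence $\lambda=F(p)=\frak R F_0(p)$, i.e. $\frak R F_0(p)=-u_t(t,0)$ for all $t>0$.

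Finally, with $\lambda=F(p)$ established, the relations $G^\alpha(\hat p^\alpha,p^\alpha)=H^\alpha(\hat p^\alpha)=F(p)$ with $\hat p^\alpha$ on the correct side of $p^\alpha$ force $\hat p^\alpha=p^\alpha$ when $H^\alpha(p^\alpha)=F(p)$, and when $H^\alpha(p^\alpha)<F(p)$ they force $\hat p^\alpha>p^\alpha$ with $\max_{[p^\alpha,\hat p^\alpha]}H^\alpha=H^\alpha(\hat p^\alpha)=F(p)$, so that $\hat p^\alpha$ is the first point above $p^\alpha$ at which $H^\alpha$ reaches $F(p)$, i.e. the stated $\sup$; symmetrically when $H^\alpha(p^\alpha)>F(p)$. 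Then $F(\hat p)=\lambda=F(p)$ and $(F(\hat p),\hat p)=(\lambda,\hat p)\in\mathcal G$ by the second step, whence $F(p)=F(\hat p)=G^\alpha(\hat p^\alpha,p^\alpha)=H^\alpha(\hat p^\alpha)$ for all $\alpha$. I expect the main obstacle to be the branch-wise Godunov identity $G^\alpha(\hat p^\alpha,p^\alpha)=H^\alpha(\hat p^\alpha)$ of the third step; everything else is a limit in \eqref{eq:W} and bookkeeping with Proposition~\ref{pro:n1}, the identification of the Godunov relaxation with $\frak R$, the self-relaxation $\frak R F=F$, and Propositions~\ref{pro:6.6}--\ref{pro:6.8}. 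Equivalently, one may first \emph{define} $\hat p$ and $\lambda:=F(p)$ by the formula, check $(\lambda,\hat p)\in\mathcal G$ with $G^\alpha(\hat p^\alpha,p^\alpha)=\lambda$ using Propositions~\ref{pro:6.6}--\ref{pro:6.8} and the self-relaxation of $F$, construct the $\mathcal G$-compatible self-similar solver $\bar u$ branch by branch (with $\bar u(t,0)=-\lambda t$, $\bar u_x(t,0^+)=\hat p$, and $\bar u|_{J_\alpha}$ the entropy solver connecting $\hat p^\alpha$ to $p^\alpha$, which is consistent precisely because $G^\alpha(\hat p^\alpha,p^\alpha)=H^\alpha(\hat p^\alpha)$), verify that $\bar u$ is a weak $F_0$-solution by reducing the junction inequalities to the affine germ states through Propositions~\ref{pro::e36}--\ref{pro::e26}, and conclude $u=\bar u$ by the uniqueness part of Theorem~\ref{th:existence-riemann}; the obstacle is then the per-branch solver.
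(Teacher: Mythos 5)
Your main route is genuinely different from the paper's. The paper proceeds in the opposite direction: it \emph{defines} $\hat p$ by the explicit formula, constructs on each branch a solution $w$ of the half-line problem with junction function $F^\alpha_p$, proves the gradient confinement $\hat p^\alpha\le w_x\le p^\alpha$ (resp.\ the reverse) by doubling of variables, identifies the trace by a blow-up at $(t_0,0)$, glues the branches using $F(\hat p)=F(p)$ (constancy of $F$ on the box $[\underline p,\overline p]$ from Propositions \ref{pro:6.6}--\ref{pro:6.8}), and concludes $w=u$ by uniqueness. You instead extract the trace $(\lambda,\hat p)$ from the solution of Theorem \ref{th:existence-riemann}, show $(\lambda,\hat p)\in\mathcal G$ and $G^\alpha(\hat p^\alpha,p^\alpha)=H^\alpha(\hat p^\alpha)=\lambda$ via the Oleinik-type structure of the convex/concave self-similar profile, and then invoke the uniqueness of the Godunov value (Proposition \ref{pro:n1}\,(ii)) together with $FG=\underline R\,\overline R F=\frak R F=F$ to get $\lambda=F(p)$. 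For the headline identity $\frak R F_0(p)=-u_t(t,0)$ and the germ membership this is a legitimate and arguably cleaner argument (the Oleinik step, which you correctly flag as the main obstacle, does go through: convexity of $W_\alpha$ gives $W_\alpha(\xi)-\xi W_\alpha'(\xi)\le W_\alpha(0)$ at points of differentiability, hence $H^\alpha(W_\alpha'(\xi))\ge\lambda$ from the profile equation, and the supersolution test at kinks covers the jumped values; similarly in the concave case).

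There is, however, a genuine gap in your last step, the identification of the trace with the \emph{explicit} formula for $\hat p$. The relations $G^\alpha(\hat p^\alpha,p^\alpha)=H^\alpha(\hat p^\alpha)=F(p)$, the sign of $\hat p^\alpha-p^\alpha$, and even the stronger Oleinik information $H^\alpha\le F(p)$ on $[p^\alpha,\hat p^\alpha]$ (say when $H^\alpha(p^\alpha)<F(p)$) do \emph{not} force $\hat p^\alpha$ to be ``the first point above $p^\alpha$ at which $H^\alpha$ reaches $F(p)$'': if $H^\alpha\equiv F(p)$ on an interval $[q_1,q_2]$ with $H^\alpha<F(p)$ on $[p^\alpha,q_1)$, then every $q\in[q_1,q_2]$ satisfies all of these constraints, while the formula selects $q_1$. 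Your bookkeeping therefore does not determine the trace in such degenerate cases, and some additional PDE input is needed to exclude the other candidates --- precisely what the paper obtains from the gradient bounds $\hat p^\alpha\le w_x\le p^\alpha$ of its constructed branch solution, which confine the blow-up slope $\check p^\alpha$ to an interval on which $H^\alpha=F(p)$ only at $\hat p^\alpha$. Your alternative route sketched at the end (define $\hat p$ by the formula, build the branch-wise solver, conclude by uniqueness) is essentially the paper's proof and would close this gap, but as written it defers exactly the per-branch construction where the work lies.
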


\begin{proof}
The idea of the proof is to give another construction of the solution to the Riemann problem. This solution is constructed on each branch separately (Step 1) and glued at the junction (Step 2). By uniqueness, we will get that this solution is in fact equal to the solution $u$ given by Theorem \ref{th:existence-riemann}.\medskip

We fix $p\in \R^N$ and we set $F:=\frak R F_0$.

\noindent{\bf Step 1: Construction on each branch separately}. Given $p\in \R^N$, we fix $\a\in\{1,\dots,N\}$ and we want to construct a solution $w$ to 
\begin{equation}\label{eq:surJalpha}\left\{\begin{array}{lll}
w_t+H^\a(w_x)=0 &\textrm{on} &(0,+\infty)\times J_\a^*\\
w_t+F^\a_p(w_x)=0&\textrm{on}&(0,+\infty)\times \{0\},\\
w(0,x)=p^\a x&\textrm{for all}& x\in J_\a
\end{array}
\right.
\end{equation}
where $F^\a_p(q^\a)=F(p^1,\dots,,p^{\a-1},q^\a,p^{\a+1},\dots ,p^N)$. 
Using the same arguments as in the proof of Theorem \ref{th:existence-riemann}, we deduce that there exists a unique  solution $w$. For $\beta=1,\dots, N$, we define
$$\overline{p}^\beta=\max(p^\beta,\hat p^\beta) \quad \mbox{and}\quad \underline{p}^\beta=\min(p^\beta,\hat p^\beta).$$
Recall that $F$ is relaxed with respect to $H$.
From the characterization of relaxed junction conditions (Proposition \ref{pro:6.6} and \ref{pro:6.8}), we deduce that 
$F$ is constant on the box $[\underline p,\overline p]$. In particular, we get
$$F^\a_p(\hat p^\a)=F^\a_p(p^\a)=F(p).$$
Notice that it can also be seen from Corollaries \ref{cor:6.7} and \ref{cor:6.10} for $F^\a_p$ which is then also relaxed with respect to $H^\a$.
We now distinguish two cases.

\noindent {\bf Case 1: $\hat p^\alpha\le p^\alpha$.} We then have
\begin{equation}\label{eq:compLip}
H^\alpha(p^\alpha)\ge F(p)= F^\alpha_p(p^\alpha)=F_p^\alpha(\hat p^\alpha)=F(\hat p)=H^\alpha(\hat p^\alpha).
\end{equation}
We claim that 
\begin{equation}\label{eq:grad}
\hat p^\alpha \le  w_x \le p^\alpha.
\end{equation}
Since the arguments are rather classical, we juste give the sketch of the proof. Let us prove the first inequality. By contradiction, we assume that 
$$M:=\sup_{\{t\in[0,T],\; x,y\in J_\a,\;x\ge y\}}\{w(t,y)-w(t,x)-\hat p^\a(y-x)\}>0.$$
Hence, for every $\delta>0$ small enough, we have
$$M_{\delta}:=\sup_{\{t\in[0,T],\; x,y\in J_\a,\;x>y\}}\left\{w(t,y)-w(t,x)-\hat p^\a(y-x)-\frac \delta 2(x^2+y^2)\right\}>0.$$
Note that all maximizers $(\bar t, \bar x,\bar y)$ satisfy $\bar x>\bar y$ (otherwise the maximum is negative) and, by classical arguments, $\delta \bar x, \delta \bar y\to 0$ as $\delta\to 0$. We now duplicate the time variable and consider, for $\eta, \e>0$, 
$$M_{\e,\delta}:=\sup_{\{t,s\in[0,T],\; x,y\in J_\a,\; x>y\}}\left\{w(t,y)-w(s,x)-\hat p^\a(y-x)-\frac \delta 2(x^2+y^2)-\frac{|t-s|^2}{2\e}-\frac \eta{T-t}\right\}>0.$$
This maximum is reached at some point denoted by $(t_\e,s_\e,x_\e,y_\e)$ which satisfies $t_\e,s_\e\to \bar t$, $x_\e\to \bar x$ and $y_\e\to \bar y$, where $(\bar t,\bar x,\bar y)$ is a maximum point of $M_{\delta}$. In particular, $x_\e>y_\e$ for $\e$ small enough and we assume that $x_\e>0=y_\e$, the other cases being treated in a similar (and even simpler) way. Using the viscosity inequality satisfied by $w$ (which is a strong $F$-solution to \eqref{eq:surJalpha}), we then have
$$\frac {t_\e-s_\e}\e+\frac {\eta}{(T-t_\e)^2}+F^\a_p(\hat p^\a+ \delta y_\e)\le0$$
and
$$\frac {t_\e-s_\e}\e+H^\a(\hat p^\a-\delta x_\e)\le0.$$
Subtracting these two inequalities, we then get
$$\frac \eta {T^2}\le H^\a(\hat p^\a-\delta x_\e)-F^\a_p(\hat p^\a+ \delta y_\e).$$
Passing to the limit $\e\to 0$ and then $\delta\to 0$, we then obtain
$$\frac  \eta {T^2}\le H^\a(\hat p^\a)-F^\a_p(\hat p^\a)\le 0,$$
where for the last inequality, we used \eqref{eq:compLip}. Contradiction. Hence $M\le 0$ and then $w_x \ge \hat p$.

In the same way (using that $H^\a(p^\a)\ge F^\a_p(p^\a)$), we can prove the second inequality in \eqref{eq:grad}.

\medskip

We now want to prove that 
\begin{equation}\label{eq::z33}
w_x(t,0)= \hat p^\alpha\quad \mbox{and}\quad -w_t(t,0)=F(p).
\end{equation}
Note that since $w$ is convex or concave, these derivatives exist. 
For all $t_0>0$, we then set
$$w_x(t_0,0)= \check p^\alpha\quad \mbox{with}\quad \hat p^\alpha \le \check p^\alpha\le p^\alpha$$
and
$$\lambda=-w_t(t_0,0)=F_p^\alpha(\check p^\alpha)=F(p)=F(\hat p).$$
Making a blow up at $(t_0,0)$, we see that
$$\varepsilon^{-1}\left\{w(t_0+\varepsilon t,\varepsilon x)-w(t_0,0)\right\} \to W(t,x):=-\lambda t + \check p^\alpha x \quad \mbox{as}\quad \varepsilon\to 0.$$
By stability of viscosity solutions, we get that $W$ is still a solution and then satisfies for $x\in J_\a^*$
$$W_t + H^\alpha(W_x)=0$$
i.e.
$$-\lambda + H^\alpha(\check p^\alpha)=0.$$
We conclude that
$$F_p^\alpha(\check p^\alpha)=H^\alpha(\check p^\alpha)=F(p) \quad \mbox{with}\quad \check p^\alpha \in [\hat p^\alpha,p^\alpha]$$
which implies that $\check p^\alpha= \hat p^\alpha$ and so \eqref{eq::z33}.\bigskip

\noindent {\bf Case 2: $H^\alpha(p^\alpha)\le F(p)$.}
A proof similar to the one of Case 1 gives again \eqref{eq::z33}.\\

\noindent {\bf Step 2: Gluing all together.}
In step 1, the function $w$ has been constructed on each branch separately, but satisfies $-w_t(t,0)=\lambda=F(p)$, i.e.
$$w(t,0)=-\lambda t$$
Therefore $w$ is continuous at $x=0$, as defined on each branch $J_\alpha$ for each $\alpha=1,\dots,N$.
We simply have to check that $w$ satisfies the junction condition at $x=0$, i.e.
$$w_t + F(\partial_1 w(t,0),\dots,\partial_N w(t,0))=0.$$
Using \eqref{eq::z33}, this is equivalent to show that
$$F(p)=F(\hat p)$$
which is true since $F$ is constant on the box $[\underline p,\overline p]$ and $p,\hat p\in [\underline p,\overline p]$.
Therefore $w$ is solution to Riemann problem, and by uniqueness of this solution, we conclude that $w=u$.
This ends the proof of the theorem.

\end{proof}
\appendix
\section{Appendices}
In these appendices, we give some interesting results concerning relaxation, which are not necessary for our paper. Since these results seem important, we only give the statement without giving the proof.

\subsection{Further properties of the relaxation}
We begin by a simple characterization of the relaxation at the characteristic points.
\begin{Proposition}[Characterization of  effective junction conditions)]\label{pro::75}
Let $H$ satisfying \eqref{eq::p00} and  $F_0$, $F$  satisfying \eqref{eq::m11}.
Then $F=\frak R F_0$ is characterized by the following properties
$$\left\{\begin{array}{l}
F= \underline{R} F  = \overline{R} F,\\
\\
F\ge F_0\quad \mbox{on}\quad \overline{\chi} (F) ,\\
\\
F\le F_0\quad \mbox{on}\quad  \underline{\chi} (F),
\end{array}\right.$$
\end{Proposition}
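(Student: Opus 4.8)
The plan is to prove the two implications separately.

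\emph{Necessity.} Let $F=\frak R F_0$. The identity $F=\underline R F=\overline R F$ follows from the projector property (Lemma~\ref{lem:proj}) applied to the two representations $F=\overline R\underline R F_0=\underline R\overline R F_0$ of Theorem~\ref{th:relax}: indeed $\overline R F=\overline R(\overline R\underline R F_0)=\overline R\underline R F_0=F$ and, symmetrically, $\underline R F=\underline R(\underline R\overline R F_0)=\underline R\overline R F_0=F$. The two comparison properties are exactly Proposition~\ref{pro:pro-relax}: \eqref{eq:pro-relax2} gives $F\ge\underline R F_0\ge F_0$ on $\overline\chi(F)$ and \eqref{eq:pro-relax} gives $F\le\overline R F_0\le F_0$ on $\underline\chi(F)$.

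\emph{Sufficiency.} Now let $F$ satisfy the three properties; the goal is $F=\frak R F_0$. From $F=\underline R F=\overline R F$ one first gets $\frak R F=\overline R\underline R F=\overline R F=F$, so $F$ is self-relaxed; hence $F\ge H_-$ (Theorem~\ref{th:relax}) and $F$ is semi-coercive (Remark~\ref{rem:F-semicoercive}). Set $G:=\frak R F_0$, which by the first part also satisfies the three properties and is self-relaxed. Assume, for contradiction, $F\ne G$. The core of the argument treats first the situation where there is a point $p_0$ with $F(p_0)>G(p_0)$ and, in addition, $\mu:=F(p_0)>H_{\max}(p_0)$. Since $\underline R F=F$, Lemma~\ref{lem:const} provides $p_1>p_0$ with $p_1\in\underline\chi(F)$ and $F(p_1)=\mu$, so that $H^\alpha(p_1^\alpha)=\mu$ for all $\alpha$; the property ``$F\le F_0$ on $\underline\chi(F)$'' gives $F_0(p_1)\ge\mu$, hence $F_0\ge\mu$ on $\{q\le p_1\}$. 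Therefore $\underline R F_0(p_1)\ge\min\{F_0(p_1),H_{\min}(p_1)\}=\mu$ and, $\underline R F_0$ being nonincreasing and $\ge\mu$ on $\{q\le p_1\}$, also $G(p_1)=\overline R(\underline R F_0)(p_1)\ge\mu$; but $G$ is nonincreasing and $p_1>p_0$, so $G(p_1)\le G(p_0)<\mu$, a contradiction. The mirror situation, a point $p_0$ with $F(p_0)<G(p_0)$ and $F(p_0)<H_{\min}(p_0)$, is excluded in the same way, now using $\overline R F=F$, the second half of Lemma~\ref{lem:const}, the property ``$F\ge F_0$ on $\overline\chi(F)$'', and the representation $G=\underline R\overline R F_0$.

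\emph{The main obstacle} is to rule out a disagreement localized where $H_{\min}(p_0)\le F(p_0)\le H_{\max}(p_0)$, in which neither half of Lemma~\ref{lem:const} is directly available. The plan is to replace $p_0$, using the box-constancy of relaxed functions (Propositions~\ref{pro:6.6} and~\ref{pro:6.8}), by the crossing point $\hat p$ of $F$ at the level $\mu=F(p_0)$ built coordinate by coordinate, so that $H^\alpha(\hat p^\alpha)=F(\hat p)=\mu$ for every $\alpha$ and $F\equiv\mu$ on the box joining $p_0$ to $\hat p$. The difficulty is that $\hat p$ is in general neither super- nor sub-characteristic --- it mixes branches where $H^\alpha$ increases to the right of $\hat p^\alpha$, branches where $H^\alpha$ decreases to the left of $\hat p^\alpha$, and degenerate ones --- so that neither comparison property can be invoked at $\hat p$ itself. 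One then has to partition the branches according to these three types and run a refined version of the contradiction above, pushing each family of coordinates separately towards a genuine characteristic configuration while keeping the relaxed value constant; it is precisely this simultaneous control of one level $\mu$ over all $N$ branches that makes the step substantially harder than the single-branch analysis of \cite{FIM1}. Equivalently, one may first reduce to semi-coercive $F_0$ --- replacing $F_0$ by $\max(F_0,H_-)$ changes neither $\frak R F_0$ (Theorem~\ref{th:relax}) nor the two comparison properties, since $H_-\le F$ at every point of $\chi(F)$ --- and then argue through the Godunov relaxation of Section~\ref{sec:3}: there $\frak R F_0=F_0G$ while, $F$ being self-relaxed and semi-coercive, $F=FG$, so the statement reduces to $F_0G=FG$, i.e. to comparing the points that realize the two Godunov actions, which brings back the same coupled-branch difficulty.
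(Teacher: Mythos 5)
This proposition sits in the appendix, where the authors explicitly state the results ``without giving the proof,'' so there is no proof in the paper to compare against; your proposal has to be judged on its own. The necessity direction is correct and complete: the identity $F=\underline R F=\overline R F$ does follow from Lemma~\ref{lem:proj} applied to the two factorizations of Theorem~\ref{th:relax}, and the two sign conditions on $\overline\chi(F)$ and $\underline\chi(F)$ are literally Proposition~\ref{pro:pro-relax}. The two extreme cases you treat in the sufficiency direction are also correct: the chain ``Lemma~\ref{lem:const} $\Rightarrow$ $p_1\in\underline\chi(F)$ with $H^\alpha(p_1^\alpha)=F(p_1)=\mu$ $\Rightarrow$ $F_0(p_1)\ge\mu$ $\Rightarrow$ $\underline RF_0(p_1)\ge\mu$ $\Rightarrow$ $\overline R\underline RF_0(p_1)\ge\mu$'' is sound and cleanly exploits the monotonicity of $G=\frak RF_0$.

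The sufficiency direction nevertheless contains a genuine gap, which you yourself flag: your two cases only show that $G\ge F$ wherever $F>H_{\max}$ and $G\le F$ wherever $F<H_{\min}$, so a disagreement point $p_0$ with, say, $F(p_0)>G(p_0)$ and $F(p_0)\le H_{\max}(p_0)$ is not excluded. (Note that your description of the residual case as ``$H_{\min}(p_0)\le F(p_0)\le H_{\max}(p_0)$'' is slightly too narrow: for the inequality $F>G$ the unhandled region is all of $\{F\le H_{\max}\}$, including $\{F<H_{\min}\}$, since the case-B argument there only yields $G\le F$, which is the wrong direction.) Your diagnosis of why the obstacle is real is accurate --- the crossing point $\hat p$ produced by Propositions~\ref{pro:6.6} and~\ref{pro:6.8} at the level $\mu=F(p_0)$ satisfies $H^\alpha(\hat p^\alpha)=F(\hat p)=\mu$ for all $\alpha$ but is in general neither super- nor sub-characteristic, so neither comparison hypothesis applies at $\hat p$ --- but the ``plan'' you offer (partitioning the branches by type and ``pushing each family separately'') is not an argument: no mechanism is given for producing, from $\hat p$, a point of $\underline\chi(F)$ or $\overline\chi(F)$ at the same level $\mu$ when some branches are flat or increasing on the wrong side, and it is exactly at such mixed configurations that the claim could conceivably fail. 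Until that step is supplied (or the problem is rerouted, e.g.\ through the Riemann-problem characterization of Section~\ref{sec:7} and the uniqueness of Theorem~\ref{th:existence-riemann}, which identifies $-u_t(t,0)$ with $\frak RF_0(p)$ and could in principle pin down $F(p)$ from the germ data), the ``characterized by'' half of the proposition remains unproven.
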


As an application of  Godunov relaxation formula to standard Hamilton-Jacobi equations, 
we can easily prove the following result.
We consider here a standard Hamilton-Jacobi equation for a single Hamiltonian $\tilde H:\R\to \R$
\begin{equation}\label{eq::rep59bis}
\tilde{u}_t + \tilde H(\tilde{u}_x)=0 \quad \mbox{for all}\quad (t,x)\in (0,T)\times \Omega \quad \mbox{with}\quad \Omega=(a,b)\ni 0
\end{equation}

Then we have the following result

\begin{Theorem}[Classical viscosity solutions are solutions at a single point]\label{th::rep60bis}
Assume that $\tilde H$  satisfies condition \eqref{eq::p00}, and let $G_{\tilde H}$ be the Godunov flux associated to $\tilde H$, defined as in \eqref{eq::reg43bis}.\\
\noindent {\bf i) (Subsolutions)} Let $ \tilde{u}: (0,T)\times \Omega \to \R$ be a subsolution to \eqref{eq::rep59bis}.
Then $\tilde{u}$ satisfies for all $t\in (0,T)$
\begin{equation}\label{eq::e77bis}
\tilde{u}_t(t,0) + {G}_{\tilde H}( \tilde{u}_x(t,0^-), \tilde{u}_x(t,0^+)) \le 0
\end{equation}
\noindent {\bf ii) (Supersolutions)} Let $ \tilde{u}: (0,T)\times \Omega\to \R$ be a  supersolution of \eqref{eq::rep59bis}.
Then $\tilde{u}$ satisfies for all $t\in (0,T)$
\begin{equation}\label{eq::e78bis}
\tilde{u}_t(t,0) + {G}_{\tilde H}( \tilde{u}_x(t,0^-), \tilde{u}_x(t,0^+)) \ge 0
\end{equation}
\end{Theorem}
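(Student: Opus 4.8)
The plan is to read the interior point $x=0$ of $\Omega=(a,b)$ as the junction point of a two‑branch junction and then to invoke the test–function reductions of Sections~\ref{sec:4} and \ref{sec:5}. Set $J_1=[0,-a)$ and $J_2=[0,b)$, the branch coordinates chosen so that the physical point attached to $x\in J_1$ is $-x$ and the one attached to $x\in J_2$ is $x$; accordingly put $H^1(q):=\tilde H(-q)$ and $H^2(q):=\tilde H(q)$, both satisfying \eqref{eq::p00}. To $\tilde u$ associate $u$ on the junction ($u(t,x)=\tilde u(t,-x)$ on $J_1$, $u(t,x)=\tilde u(t,x)$ on $J_2$). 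A $C^1$ test function for $\tilde u$ at $(t_0,0)$ becomes, in these coordinates, a junction test function whose branch slopes $p=(p^1,p^2)$ satisfy $-p^1=\tilde u_x(t_0,0^-)$, $p^2=\tilde u_x(t_0,0^+)$, and, by the definition \eqref{eq::reg43bis} of the Godunov flux, the quantity in \eqref{eq::e77bis}--\eqref{eq::e78bis} is precisely the strong $\mathfrak A$-(sub/super)solution expression at $x=0$ for the junction function
$$\mathfrak A(p^1,p^2):=G_{\tilde H}(-p^1,p^2),$$
which is continuous and nonincreasing in each variable, hence admissible in the sense of \eqref{eq::m11}. So the theorem reduces to: $u$ is a strong $\mathfrak A$-subsolution (resp. supersolution) of the junction problem at every $(t_0,0)$.

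Next I would record that $\mathfrak A$ is self-relaxed, in particular $\mathfrak A=\underline R\mathfrak A=\overline R\mathfrak A$. Since $\mathfrak A$ is semi-coercive in each branch direction (coercivity of $\tilde H$), this is a one-dimensional computation with $G_{\tilde H}$: componentwise $\mathfrak A$ is constant on each of the boxes $[p,\bar p]$, $[\underline p,p]$ of Propositions~\ref{pro:6.6} and \ref{pro:6.8} (the content of \cite[Lemma~5.7]{FIM1}, applied on each branch, equivalently of the identification of the Godunov relaxation with $\mathfrak R$ in Section~\ref{sec:4}). Hence Proposition~\ref{pro::e36} (subsolutions) and Proposition~\ref{pro::e26} (supersolutions) apply, and it suffices to check \eqref{eq::e43}, \eqref{eq::e33} against the reduced family $\varphi(t,x)=\psi(t)+\phi_p(x)$ with $p$ in $\underline{\chi}(\mathfrak A)$, resp. $\overline{\chi}(\mathfrak A)$; moreover we are free to take each $\phi_p$ affine on each branch. (For the subsolution half one also needs the weak continuity of $u$ at $(t_0,0)$; it holds because $\tilde u$ is a subsolution of the coercive equation \eqref{eq::rep59bis} in a full two-sided neighbourhood of $x=0$, by the argument of \cite[Lemma~2.3]{IM1}: steep one-sided test functions rule out an upward jump.)

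The crux is a small observation about characteristic points. If $p=(p^1,p^2)\in\underline{\chi}(\mathfrak A)$ then necessarily $-p^1\ge p^2$: the sub-characteristic condition forces $\tilde H(r)<\tilde H(-p^1)=H^1(p^1)$ for $r$ slightly larger than $-p^1$, so if one had $-p^1<p^2$ then $\mathfrak A(p)=\min_{[-p^1,p^2]}\tilde H<H^1(p^1)$, contradicting $H^1(p^1)=\mathfrak A(p)$. Thus the corresponding piecewise-affine test function $\tilde\varphi$ has a concave corner at $x=0$, with $\tilde\varphi_x(t_0,0^-)=-p^1\ge p^2=\tilde\varphi_x(t_0,0^+)$, so that for every slope $r\in[p^2,-p^1]$ the affine function $\psi(t)+rx$ lies above $\tilde\varphi$ near $(t_0,0)$, hence above $\tilde u$, and is an admissible $C^1$ test function for $\tilde u$; the PDE subsolution inequality gives $\psi'(t_0)+\tilde H(r)\le0$. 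Maximising over $r\in[p^2,-p^1]$ yields $\psi'(t_0)+\max_{[p^2,-p^1]}\tilde H=\psi'(t_0)+G_{\tilde H}(-p^1,p^2)=\psi'(t_0)+\mathfrak A(p)\le0$, which is \eqref{eq::e43}; Proposition~\ref{pro::e36} then gives that $u$ is a strong $\mathfrak A$-subsolution at $(t_0,0)$, i.e. \eqref{eq::e77bis}. The supersolution case is the mirror image: a super-characteristic point of $\mathfrak A$ forces $-p^1\le p^2$, the affine functions of slope $r\in[-p^1,p^2]$ now lie below $\tilde\varphi$ hence below $\tilde v$, and minimising $\tilde H$ over $[-p^1,p^2]$ gives $\psi'(t_0)+G_{\tilde H}(-p^1,p^2)\ge0$, which via Proposition~\ref{pro::e26} is \eqref{eq::e78bis}.

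I expect the main difficulty to be bookkeeping rather than analysis: keeping the sign conventions straight when passing between the scalar equation on $\Omega$ and the two-branch junction (so that sub-characteristic slopes of $\mathfrak A$ correspond to concave corners of test functions, and super-characteristic slopes to convex ones), and verifying the weak-continuity hypothesis of Proposition~\ref{pro::e36}. The genuinely analytic input — reducing to characteristic test functions — is already provided by Propositions~\ref{pro::e36}--\ref{pro::e26}, and once one is confined to those slopes the remaining step is the elementary comparison of $\tilde u$ with affine functions through a monotone corner.
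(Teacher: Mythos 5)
The paper never proves this statement: Theorem~\ref{th::rep60bis} sits in the appendix, which opens with ``we only give the statement without giving the proof,'' so there is nothing to compare your argument against line by line. That said, your route is exactly the one the paper's framing suggests (``an application of the Godunov relaxation formula''), and I believe it is correct and essentially complete. The reduction of the interior point to a two--branch junction with $H^1(q)=\tilde H(-q)$, $H^2(q)=\tilde H(q)$ and junction function $\mathfrak A(p^1,p^2)=G_{\tilde H}(-p^1,p^2)$ is the right move; your observation that sub-characteristic (resp.\ super-characteristic) slopes of $\mathfrak A$ force $-p^1\ge p^2$ (resp.\ $-p^1\le p^2$), so that the reduced test functions have monotone corners through which genuinely $C^1$ affine test functions of every intermediate slope can be threaded, is the real content, and the max/min over the intermediate slopes is precisely $G_{\tilde H}$ with the correct orientation. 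Two points are asserted rather than proved and deserve to be written out: (i) that $\mathfrak A$ is self-relaxed, i.e.\ $\mathfrak A=\underline R\mathfrak A=\overline R\mathfrak A$ --- this is true and follows from the box-constancy criteria of Propositions~\ref{pro:6.6} and \ref{pro:6.8} (for the sub-relaxed case one checks that when $-p^1>p^2$ the set where $\tilde H$ attains $\max_{[p^2,-p^1]}\tilde H$ is contained in the sub-box $[\bar p^2,-\bar p^1]$, so $G_{\tilde H}$ is constant there), but it is not a one-line consequence of the cited lemma; and (ii) the weak continuity of $u$ at $(t_0,0)$ needed for Proposition~\ref{pro::e36} --- this does hold at an interior point, and is in fact easier than at a junction since the steep test function $-Ky$ is genuinely $C^1$ there, so coercivity of $\tilde H$ directly rules out a one-sided drop of the subsolution. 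With those two checks made explicit, the argument stands.
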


\subsection{Refined gradient estimates on a junction}

In this subsection, we state a result concerning Lipschitz estimates for the solution on the junction.
\begin{Theorem}[Refined gradient estimates on a junction]\label{th::b1}
Assume that $H$ satisfies condition \eqref{eq::p00} and that $F_0$ satisfies \eqref{eq::m11} on the box 
$$Q= \prod_{\alpha=1,\dots,N} [m^\alpha,M^\alpha]$$
Let the initial data $u_0$ be Lipschitz continuous and satisfy the following gradient estimate
\begin{equation}\label{eq::b4}
m^\alpha \le (u_0)_x \le M^\alpha \quad \mbox{a.e. on }J_\alpha^* \quad \mbox{for each}\quad \alpha=1,\dots, N
\end{equation}
with $m^\alpha<M^\alpha$.
If these bounds satisfy the following inequalities
\begin{equation}\label{eq::b2}
\left\{\begin{array}{l}
H^\alpha(M^\alpha)\ge F_0(m^1,\dots,m^{\alpha-1}, M^\alpha,m^{\alpha+1},\dots,m^N),\\
H^\alpha(m^\alpha)\le F_0(M^1,\dots,M^{\alpha-1}, m^\alpha,M^{\alpha+1},\dots,M^N),
\end{array}\right|\quad \mbox{for all}\quad \alpha=1,\dots,N
\end{equation}
then there exists the solution $u$ to \eqref{eq::m10}, \eqref{eq::p21}  is globally Lipschitz continuous, with $u(t,\cdot)$ satisfying moreover
the following gradient estimate for all $t\ge 0$
$$m^\alpha \le u_x(t,\cdot) \le M^\alpha \quad \mbox{a.e. on }J_\alpha^* \quad \mbox{for each}\quad \alpha=1,\dots, N$$
\end{Theorem}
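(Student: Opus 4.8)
The plan is to deduce the statement from the comparison principle (Theorem~\ref{th:PC}) and the existence of a weak solution (Theorem~\ref{th::p40}), the main point being an \emph{a priori} gradient estimate obtained by a doubling-of-variables argument in the spirit of Proposition~\ref{pro::n6jb}. A preliminary reduction is to extend $F_0$ from the box $Q$ to a continuous, nonincreasing and semi-coercive function on all of $\R^N$ still satisfying \eqref{eq::b2}; a convenient choice is $\widetilde F_0(p):=F_0\big(\pi_Q(p)\big)+\sum_{\alpha=1}^N\big(m^\alpha-p^\alpha\big)_+$, where $\pi_Q$ is the coordinatewise projection onto $Q$. This preserves \eqref{eq::m11} and \eqref{eq::b2} (all the corners appearing in \eqref{eq::b2} lie in $Q$) and is semi-coercive in the sense of \eqref{eq::p63}, so Theorems~\ref{th::p40} and~\ref{th:PC} apply: there is a unique weak $\widetilde F_0$-solution $u$, and comparing its semicontinuous envelopes shows that $u$ is continuous. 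Since the conclusion will be that the gradient of $u$ stays in $Q$, where $\widetilde F_0$ and $F_0$ agree, this reduction is harmless, and I write $F_0$ for $\widetilde F_0$ below.

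\emph{Step 1 (global Lipschitz bound).} Set $C:=\big(\max_\alpha \sup_{[m^\alpha,M^\alpha]}|H^\alpha|\big)\vee \sup_Q|F_0|<+\infty$. Because the difference quotients of $u_0$ along each branch lie in $[m^\alpha,M^\alpha]$ and its one-sided slopes at the junction are correspondingly controlled, one checks — using only the monotonicity of $F_0$ for the junction inequality — that $u_0(x)\pm Ct$ are respectively a weak $F_0$-super- and a weak $F_0$-subsolution. The comparison principle then yields $|u-u_0|\le Ct$, and the time-translation trick ($u^h(t,x)=u(t+h,x)\mp Ch$ together with comparison) gives that $u$ is Lipschitz in time; coercivity of each $H^\alpha$ upgrades this to global Lipschitz continuity on $[0,+\infty)\times J$. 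This is exactly the argument of Steps~1--2 of the proof of Theorem~\ref{th:existence-riemann}.

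\emph{Step 2 (sharp gradient bound).} I would prove the $2N$ one-sided bounds $u_x\le M^\alpha$ and $u_x\ge m^\alpha$ on $J_\alpha^*$, $\alpha=1,\dots,N$, \emph{simultaneously}, arguing by contradiction: if the worst of them is about to be violated at some positive time, one doubles the space variable inside the offending branch, say $\Psi=u(t,x)-u(s,y)-M^\alpha(x-y)-\tfrac{|t-s|^2}{2\varepsilon}-\tfrac{\eta}{T-t}-(\text{mild compactification in }x)$ with $x\ge y$ in $J_\alpha$, and derives a contradiction at a maximum point $(\bar t,\bar s,\bar x,\bar y)$. When $\bar x,\bar y\in J_\alpha^*$ this is the usual interior computation: the PDE viscosity inequalities for $u$ at $(\bar t,\bar x)$ (subsolution) and $(\bar s,\bar y)$ (supersolution) subtract to $\eta/(T-\bar t)^2\le o(1)$. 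The only delicate case is $\bar y=0$. There one extends the test function to the other branches $J_\beta$ ($\beta\neq\alpha$) with slope $m^\beta$ — legitimate because, the lower bounds on those branches not having been violated, $u$ lies above the corresponding affine function near the junction — and invokes the \emph{weak} supersolution inequality of $u$ at the junction point. Passing to the limit in the penalizations, the test-gradient tends to the corner $\sigma_\alpha:=(m^1,\dots,m^{\alpha-1},M^\alpha,m^{\alpha+1},\dots,m^N)$ and one is left with
\[
\frac{\eta}{(T-\bar t)^2}\ \le\ \max\big\{F_0(\sigma_\alpha),\,H_{max}(\sigma_\alpha)\big\}-H^\alpha(M^\alpha).
\]
The first inequality of \eqref{eq::b2} (for the index $\alpha$) gives $F_0(\sigma_\alpha)\le H^\alpha(M^\alpha)$; and for $\beta\neq\alpha$ one has $\tau_\beta:=(M^1,\dots,m^\beta,\dots,M^N)\ge\sigma_\alpha$, so monotonicity of $F_0$ together with \eqref{eq::b2} yields the chain $H^\beta(m^\beta)\le F_0(\tau_\beta)\le F_0(\sigma_\alpha)\le H^\alpha(M^\alpha)$, whence $H_{max}(\sigma_\alpha)=H^\alpha(M^\alpha)$ and the right-hand side above is $\le 0$ — a contradiction. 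The case $\bar x=0$ of a lower bound is symmetric: $u$ is then a weak \emph{subsolution} at the junction, the test function is extended with slope $M^\beta$ on the branches $\beta\neq\alpha$ (legitimate since the upper bounds there are not violated), the limiting gradient is $\tau_\alpha$, and one uses $H^\alpha(m^\alpha)\le F_0(\tau_\alpha)$ together with $H^\alpha(m^\alpha)\le F_0(\tau_\alpha)\le F_0(\sigma_\beta)\le H^\beta(M^\beta)$ (since $\tau_\alpha\ge\sigma_\beta$), so that $H_{min}(\tau_\alpha)=H^\alpha(m^\alpha)$. This yields $m^\alpha\le u_x(t,\cdot)\le M^\alpha$ a.e.\ on $J_\alpha^*$ for every $t\ge 0$.

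\emph{Main obstacle.} The heart of the proof — and essentially its only non-routine ingredient — is the junction case of Step~2: organizing the simultaneous-estimate bookkeeping so that, at a junction maximum point, one is entitled to use slopes $m^\beta$ (resp.\ $M^\beta$) on the branches other than the one being estimated, and then recognizing that the two inequalities of \eqref{eq::b2}, combined with the monotonicity of $F_0$, are exactly what make the corner chain $H^\beta(m^\beta)\le F_0(\tau_\beta)\le F_0(\sigma_\alpha)\le H^\alpha(M^\alpha)$ (and its symmetric counterpart) hold. Everything else — the Lipschitz bound, the interior doubling, the passages to the limit in the doubling parameters — is standard and parallels the proofs of Theorems~\ref{th:PC} and~\ref{th:existence-riemann}.
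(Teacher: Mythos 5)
The paper does not actually prove this theorem: it appears in the appendix, where the authors state the result without proof, indicate that ``the proof follows easily from the construction using an implicit scheme which mimics the PDE,'' and explicitly add that they \emph{do not know how to recover (even heuristically) conditions \eqref{eq::b2} directly from the PDE, without using the scheme}. Your proposal is precisely an attempt at the direct PDE route the authors disclaim, so it is a genuinely different approach --- and it contains a genuine gap exactly at the point you yourself single out as ``the heart of the proof.''

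The gap is the circularity in the junction case of Step~2. You double variables \emph{inside the offending branch} $J_\alpha$ and, when the maximum point sits at $\bar y=0$, you extend the test function to the branches $J_\beta$, $\beta\neq\alpha$, with slope $m^\beta$, justifying this by saying that ``the lower bounds on those branches not having been violated, $u$ lies above the corresponding affine function near the junction.'' But those lower bounds are among the $2N$ estimates being proved, and no organization of a ``first violation'' argument rescues this: if $T^*$ is the first time some bound fails and you run the doubling on $[0,T_j]$ with $T_j\downarrow T^*$, the maximum point time $\bar s$ lies in $(T^*,T_j]$, where the other bounds are only known up to an error $\delta_j>0$; the affine extension with slope $m^\beta$ then lies below $u(\bar s,\cdot)+\delta_j$ rather than below $u(\bar s,\cdot)$, so it is not a test function touching $u$ from below at $(\bar s,0)$ and the weak supersolution inequality cannot be invoked. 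Shifting it down by $\delta_j$ destroys the contact at the junction point. As written, the argument therefore does not close. (The corner algebra itself is fine: $\tau_\beta\ge\sigma_\alpha$ and the monotonicity of $F_0$ do give $H^\beta(m^\beta)\le F_0(\tau_\beta)\le F_0(\sigma_\alpha)\le H^\alpha(M^\alpha)$, hence $H_{\max}(\sigma_\alpha)=H^\alpha(M^\alpha)$; and Step~1 is unobjectionable modulo the unboundedness of $u_0$ versus the hypotheses of Theorem~\ref{th:PC}.) To repair the scheme-free argument you would need to encode all $2N$ bounds in a \emph{single} doubling functional over the whole junction, e.g. $u(t,x)-u(s,y)-\psi(x,y)$ with $\psi(x,y)=M^\alpha(x-y)^+ + m^\alpha(x-y)^-$ for $x,y\in J_\alpha$ and $\psi(x,y)=M^\alpha x-m^\beta y$ for $x\in J_\alpha$, $y\in J_\beta$, $\alpha\neq\beta$: then the test function at a junction maximum is globally defined and genuinely touching by construction, its gradient is the corner $\sigma_\alpha$ or $\tau_\beta$, and your corner chain applies. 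That is a different (and missing) idea from the branch-by-branch bookkeeping you describe; the paper instead sidesteps the whole difficulty by propagating the bounds through a monotone implicit scheme and passing to the limit.
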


 The result is particularly interesting and the proof follows easily from the construction using an implicit scheme which mimics the PDE.
We do not know how to recover (even heuristically) conditions \eqref{eq::b2} directly from the PDE, without using the scheme.

\subsection{Explicit relaxation: the finite dimensional manifold of limiters}\label{s5}
There is a good case, where the relaxation formula can be constructed explicitly,
and the set of effective junction conditions is parametrized by a finite-dimensional set $\mathcal A$.
To this end, we assume that $H$ satisfies \eqref{eq::p00} and we assume moreover that each Hamiltonian $H^\alpha$ 
has a finite number of minima and maxima. Then we show how we can parametrize explicitly all effective boundary conditions by a finite dimensional manifold $\mathcal A$ of limiters.

For $\alpha=1,\dots,N$, we call $m^\alpha_k$ for  $k=1,\dots n_\alpha$, the increasing sequence of points of minima of ${H}^\alpha$ and  $M^\alpha_k$ for $k=1,\dots, n_\alpha-1$ the increasing sequence of points of maxima of ${H}^\alpha$, such that 
$$m^\alpha_0=M^\alpha_{0}:=-\infty< m^\alpha_{1}<M^\alpha_{1} < m^\alpha_{2}<\dots < M^\alpha_{n_\alpha-1}<m^\alpha_{n_\alpha}< +\infty =: M^\alpha_{n_\alpha}=m^{\alpha}_{n_\alpha+1}.$$
For each index $I=(i_1,\dots,i_N)$, we assume that
$$\max_{\alpha=1,\dots,N} {H}^\alpha(m^\alpha_{i_\alpha}) =:\quad A_{I}^0\le  B_{I}^0 \quad := \min_{\alpha=1,\dots,N} {H}^\alpha(M^\alpha_{i_\alpha})$$
with the convention that ${H}^\alpha(\pm \infty)=+\infty$ for $\alpha=1,\dots,N$. We also denote by $(e_1,\dots,e_N)$ the orthonormal basis of $\R^N$.\\

\noindent {\bf Definition of limiters associated to $F_0$.}\\
We now give the recipe to construct flux limiters $A$ given a general function $F_0:\R^N\to \R$ satisfying \eqref{eq::m11}.
Notice that $m_I< M_I$ (if none of the indices $i^\alpha$ is equal to zero), and then $F_0(M_I)\le F_0(m_I)$.

Given $F_0$, we now define a tensor $A=(A_I)_{I} \in \R^{n_1\times \dots\times n_N}$ where the $A_I$ are the associated limiters given by
$$A_I:=\left\{\begin{array}{ll}
\displaystyle \min(A_{I}^0,  \min_{\alpha=1,\dots,N}A_{I-e_\alpha})&\quad  {\rm if} \quad {F}_0(m_I) < A_{I}^0,\\
\displaystyle \max(B_{I}^0, \max_{\alpha=1,\dots,N}A_{I+e_\alpha})&\quad  {\rm if} \quad {F}_0(M_I) > B_{I}^0,\\
{F}_0(p_I) = {H}^\alpha(p^\alpha_{I}) \quad \mbox{for all}\quad \alpha=1,\dots,N &\quad  \mbox{if}\quad A_{I}^0 \le  {F_0}(m_I)  \quad \mbox{and}\quad {F_0}(M_I) \le  B_{I}^0,\\
\end{array}\right.$$
where $p_I=(p^1_I,\dots,p^N_I)$ is the unique solution satisfying $p^\alpha_I \in [m^\alpha_I, M^\alpha_I]$.
Here we use the convention that the tensor $A$ is assumed to be extended for all 
$I\in \prod_{\alpha=1,\dots,N} \left\{0,\dots,n_\alpha\right\}$ by
$A_{I}=+\infty$  if there exists some $\alpha$ such that  $i_\alpha=0$, 
and  for all $I\in \prod_{\alpha=1,\dots,N} \left\{1,\dots,n_\alpha+1\right\}$ by
$A_{I}=-\infty$ if there exists some $\alpha$ such that $i_\alpha=n_\alpha+1$. They satisfy in particular $A_I\ge A_{I'}$ if $I\le I'$.\\

\noindent {\bf Parametrization of the level sets of $F:=\frak R F_0$.}\\
We define  $p^-_I(\mu)=(p^{1-}_{i_1}(\mu),\dots,p^{N-}_{i_N}(\mu))$ with
$$p^{\alpha-}_{k}(\mu):=\left\{\begin{array}{ll}
 M_{k-1}^\alpha &\quad \mbox{if}\quad \mu > H^\alpha(M_{k-1}^\alpha),\\
 m_{k}^\alpha &\quad \mbox{if}\quad  \mu< H^\alpha(m_{k}^\alpha),\\
 p^{\alpha-}_{k} \in \left[M_{k-1}^\alpha, m_{k}^\alpha \right]  \quad \mbox{with}\quad 
H^\alpha(p^{\alpha-}_{k})=\mu  &\quad \mbox{if}\quad \mu \in \left[H^\alpha(m_{k}^\alpha), H^\alpha(M_{k-1}^\alpha)\right].\\
\end{array}\right.$$
We use the convention that $p^{\alpha-}_{1}(+\infty)=-\infty$
and $p^{\alpha-}_{n_\alpha+1}(+\infty)=+\infty$.
For $g\in \left\{-1,0\right\}^N$, we define the quadrant
$$Q_g:= \prod_{\alpha=1,\dots,N} \Delta^\alpha\quad \mbox{with}\quad  \Delta^\alpha:=\left\{\begin{array}{ll}
\left(-\infty,0\right]&\quad \mbox{if}\quad g^\alpha=-1,\\
\left[0,+\infty\right)&\quad \mbox{if}\quad g^\alpha=0.
\end{array}\right.$$

\begin{Theorem}[Explicit relaxation for Hamiltonians $H^\alpha$ with $n_\alpha$ minima]\label{th::25}
Under the previous assumptions, we associate to each function $F_0$,  a tensor of limiters $A=(A_I)_I\in \R^{n_1\times \dots\times n_N}$ which is generically parametrized by a manifold $\mathcal A$ of dimension at most equal to $n_1\times \dots\times n_N$. 

Then the relaxed junction function $F:=\frak R F_0$ only depends on the tensor $A$ and is given as follows.
For all $p\in \prod_{\alpha=1,\dots,N} \left[M^\alpha_{i_\alpha-1}, M^\alpha_{i_\alpha}\right]$, we have
$$F(p)=\inf \left\{\mu\in \R,\quad p\in p^-_I(\mu)+  S_I(\mu)\right\} \quad \mbox{with}\quad S_I(\mu):=\displaystyle \bigcup_{g\in \left\{-1,0\right\}^N,\ A_{I+g}\le \mu} Q_g.$$
\end{Theorem}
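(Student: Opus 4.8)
The plan is to set $\tilde F$ equal to the function defined by the right-hand side of the asserted formula and to prove $\tilde F=\frak R F_0$ by verifying the three characterizing properties of Proposition~\ref{pro::75}: that $\tilde F=\underline R\tilde F=\overline R\tilde F$, that $\tilde F\ge F_0$ on $\overline\chi(\tilde F)$, and that $\tilde F\le F_0$ on $\underline\chi(\tilde F)$; by the uniqueness asserted there this identifies $\tilde F$ with $\frak R F_0$. First I would record the structural facts used throughout. For fixed $\alpha$ and $k$ the map $\mu\mapsto p^{\alpha-}_{k}(\mu)$ is continuous and non-increasing, equal to the inverse of $H^\alpha$ on the decreasing branch $[M^\alpha_{k-1},m^\alpha_{k}]$ and locally constant elsewhere; the sets $S_I(\mu)$ are increasing in $\mu$, since enlarging $\mu$ can only add quadrants $Q_g$ for which $A_{I+g}\le\mu$; and the limiters satisfy $A_I\ge A_{I'}$ whenever $I\le I'$. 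Together these show that the family $\{p\in R_I:\ p\in p^-_I(\mu)+S_I(\mu)\}$ (where $R_I:=\prod_\alpha[M^\alpha_{i_\alpha-1},M^\alpha_{i_\alpha}]$) is upward closed and non-decreasing in $\mu$, so that $\tilde F$ is a well-defined, continuous, non-increasing, semi-coercive function of $p$.

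The combinatorial core is to describe the level sets box by box and to read off the characteristic points. On $R_I$ the formula exhibits $\{\tilde F\le\mu\}\cap R_I$ as the translate $p^-_I(\mu)+S_I(\mu)$: on the faces where a coordinate lies on a decreasing branch the boundary tracks $H^\alpha$ through $p^{\alpha-}_{i_\alpha}(\mu)$, while on the flats produced by the quadrants $Q_g$ the function is locally constant at the level of a limiter. I would then show that the points where $\tilde F(p)=H^\alpha(p^\alpha)$ holds simultaneously for all $\alpha$, with every coordinate sitting on an increasing branch or at a local extremum — that is, the candidate members of $\overline\chi(\tilde F)$ and $\underline\chi(\tilde F)$ — are governed by the limiters: at each such point $\tilde F$ takes the value $A_{J}$ of the adjacent index $J$, and in the balanced case this value is $A_J=F_0(p_J)=H^\alpha(p^\alpha_J)$ for all $\alpha$. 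The decisive input is that the translates $p^-_I(\mu)$ and the inclusion thresholds $A_{I+g}\le\mu$ match continuously across a shared face $\{p^\alpha=M^\alpha_{i_\alpha}\}$ separating $R_I$ from $R_{I+e_\alpha}$: the $\min$/$\max$-with-neighbour clauses $\min(A_I^0,\min_\alpha A_{I-e_\alpha})$ and $\max(B_I^0,\max_\alpha A_{I+e_\alpha})$ in the recipe are exactly the compatibility relations making $\{\tilde F\le\mu\}$ glue into a single upward-closed set.

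Granting this description, the three properties follow. Self-relaxedness is checked against Proposition~\ref{pro:6.6} and Proposition~\ref{pro:6.8}: at any $p$ the construction makes $\tilde F$ constant on the box $[p,\bar p]$ (respectively $[\underline p,p]$) cut out by the first exit of the $H^\alpha$ above (respectively below) the value $\tilde F(p)$, because between consecutive level boundaries $\tilde F$ is flat and on a boundary it equals the relevant $H^\alpha$; the finiteness of $\underline p^\alpha$ demanded by Proposition~\ref{pro:6.8} is supplied by the coercivity of $H^\alpha$, encoded in the boundary conventions $A_I=\pm\infty$. For the value matching, I would run through the three cases of the recipe at the relevant characteristic points: the balanced case gives $\tilde F=F_0$ there outright; the clause triggered by $F_0(m_I)<A_I^0$ produces a super-characteristic configuration at which $\tilde F=A_I\ge F_0$, and symmetrically the clause triggered by $F_0(M_I)>B_I^0$ produces a sub-characteristic configuration with $\tilde F=A_I\le F_0$, the one-sided inequalities being precisely the strict inequalities defining those cases. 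Proposition~\ref{pro::75} then gives $\tilde F=\frak R F_0$. Since the formula invokes $F_0$ only through the tensor $A$, and the recipe reads $A$ off from the finitely many numbers $F_0(m_I)$, $F_0(M_I)$ and $F_0(p_I)$, the relaxation depends only on $A$; generically the balanced entries range freely over intervals $[A_I^0,B_I^0]$, so the admissible tensors form a manifold $\mathcal A$ of dimension at most $n_1\times\cdots\times n_N$.

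The step I expect to be the main obstacle is the gluing argument of the second paragraph: proving that the box-by-box translates $p^-_I(\mu)+S_I(\mu)$ assemble into one globally monotone, continuous and self-relaxed function. All the difficulty sits at the shared faces $\{p^\alpha=M^\alpha_{i_\alpha}\}$, where the two descriptions inherited from $R_I$ and $R_{I+e_\alpha}$ must be shown to coincide; this is exactly what forces the recursive $\min$/$\max$ structure of the limiters and calls for an induction on the partial order of multi-indices (equivalently, a sweep in the threshold $\mu$). Controlling this matching uniformly, together with the careful enumeration of which coordinate configurations actually produce characteristic points, is where the bulk of the technical work will lie.
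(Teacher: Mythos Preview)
The paper does not prove this theorem. It sits in Appendix~A.3, and the appendix preamble says explicitly: ``we only give the statement without giving the proof.'' There is therefore no paper argument to compare your proposal against.

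On the merits of your plan: the strategy of defining $\tilde F$ by the right-hand side and then invoking the characterization of Proposition~\ref{pro::75} is natural, and you have correctly located the real work in the gluing of the box-by-box descriptions across the shared faces $\{p^\alpha=M^\alpha_{i_\alpha}\}$. Two cautions. First, Proposition~\ref{pro::75} is \emph{also} stated without proof in the same appendix, so as written your argument rests on another unproven statement; you would need to derive it first (this should be accessible from Proposition~\ref{pro:pro-relax} together with Propositions~\ref{pro:6.6}--\ref{pro:6.8} and a uniqueness step), or else bypass it and verify $\tilde F=\frak R F_0$ directly from the definitions. Second, the passage from ``the $\min/\max$-with-neighbour clauses are exactly the compatibility relations'' to an actual continuity/self-relaxedness proof is where all the content lives, and your sketch does not yet pin down the induction: you will need to make precise in what order the $A_I$ are determined (the recipe as stated is recursive in both directions, through $A_{I-e_\alpha}$ and $A_{I+e_\alpha}$), and to check that the three clauses are mutually exclusive and exhaustive so that the tensor is well defined before you can use it. The roadmap is sound, but the substantive verification---well-definedness of $A$, global continuity of $\tilde F$, and the case-by-case identification of $\chi(\tilde F)$---remains to be written.
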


\paragraph{\textbf{Acknowledgement.}}
This research was partially funded by l'Agence Nationale de la Recherche (ANR), project ANR-22-CE40-0010 COSS. 
For the purpose of open access, the authors have applied a CC-BY public copyright licence to any Author Accepted Manuscript (AAM) version arising from this submission. The authors thanks C. Imbert for fruitful discussions that strongly  motivated this study.

The second  author also thanks J. Dolbeault, C. Imbert, T. Leli\`{e}vre and G. Stoltz for providing him good working conditions.

\bibliographystyle{siam}
\begin{thebibliography}{ABC}
\bibitem{ACCT} 
{Y. Achdou, F. Camilli, A. Cutr\`{\i}, and N. Tchou},
{\it  Hamilton-Jacobi equations constrained on networks}. Nonlinear Differential Equations and Applications NoDEA, 20(3) (2013), 413-445.

\bibitem{BC}  
\textsc{G. Barles, E. Chasseigne},
{\it Some Comparison Results for First-Order
Hamilton-Jacobi Equations and Second-Order Fully
Nonlinear Parabolic Equations with Ventcell
Boundary Conditions}. Preprint

\bibitem{BC-book} 
{G. Barles, and E. Chasseigne}, 
{\sc On modern approaches of Hamilton-Jacobi equations and control problems with discontinuities. A guide to theory, applications, and some open problems}, volume 104 of Prog. Nonlinear Differ. Equ. Appl. Cham: Birkhaüser, 2024.

\bibitem{FIM1}
\textsc{N. Forcadel, C. Imbert, R. Monneau},
{\it Non-convex coercive Hamilton-Jacobi equations: Guerand's relaxation revisited}, Pure and Applied Analysis 6 (4) (2024), 1055-1089.

\bibitem{FIM2}
\textsc{N. Forcadel, C. Imbert, R. Monneau},
{\it Coercive Hamilton-Jacobi equations in domains:
the twin blow-ups method}, C.R. Math. 362 (2024), 829-839.

\bibitem{FIM3}
\textsc{N. Forcadel, C. Imbert, R. Monneau},
{\it The twin blow-up method for Hamilton-Jacobi equations in higher dimension}, to 	appear in ESAIM:COCV.

\bibitem{guerand}
\textsc{J. Guerand},
{\it  Flux-limited solutions and state constraints for quasi-convex Hamilton-Jacobi equations in multidimensional domains.} Nonlinear Anal. 162 (2017), 162-177.

\bibitem{IM1}
\textsc{C. Imbert, R. Monneau},
{\it Flux-limited solutions for quasi-convex {Hamilton}-{Jacobi} equations on networks},
Ann. Sci. {\'E}c. Norm. Sup{\'e}r. (4) 50 (2) (2017), 357-448.

\bibitem{IMZ}
{C. Imbert, R. Monneau and H. Zidani},
{\it  A Hamilton-Jacobi approach to junction problems and application to traffic flows}. 
ESAIM: Control, Optimisation and Calculus of Variations 19 (2013), 129-166.

\bibitem{LS1} 
{P.-L. Lions and P. Souganidis},
{\it Viscosity solutions for junctions: well posedness and stability}. 
Rendiconti Lincei-matematica e applicazioni, 27(4) (2016), 535-545.

{\bibitem{LS2}
{P.-L. Lions, P. Souganidis},
{\it Well-posedness for multi-dimensional junction problems with Kirchoff-type conditions},
Rend. Lincei Mat. Appl. 28 (2017), 807-816.}

{\bibitem{M}
{R. Monneau},
{\it Structure of Riemann solvers on networks (preliminary version)}, HAL preprint hal-04764513 , version 2 (2025).

\end{document}